\numberwithin{equation}{section} \makeatletter
\renewcommand{\subsection}{\@startsection
{subsection}{2}{0mm}{\baselineskip}{-0.25cm}
{\normalfont\normalsize\bf}} \makeatother
\newtheorem{theorem}{Theorem}[section]
\newtheorem{lemma}[theorem]{Lemma}
\newtheorem{corollary}[theorem]{Corollary}
\newtheorem{definition}[theorem]{Definition}
\newtheorem{remark}[theorem]{Remark}
\newtheorem{proposition}[theorem]{Proposition}
\newtheorem{example}[theorem]{Example}
\newtheorem{assumption}[theorem]{Assumption}
\def \F {\mathcal F}
\def \G {\mathcal G}
\def \Q {\mathbf Q}
\def \R {\mathbb R}
\def \bF {\mathbb F}
\def \bE {\mathbb E}
\def \bN {\mathbb N}
\newcommand{\ud}{\mathrm d}
\newcommand{\ds}{\displaystyle}
\newcommand{\esp}[2][\mathbb E] {#1\left[#2\right]}
\newcommand{\condespf}[2][\F_t]       {\mathbb E\left.\left[#2\right|#1\right]}
\DeclareMathOperator*{\essinf}{ess\,inf}
\DeclareMathOperator*{\esssup}{ess\,sup}
\DeclarePairedDelimiter{\abs}{\lvert}{\rvert}
\def \a {{(1)}}
\def \b {{(2)}}
\def \i {{(i)}}
\newcommand{\Pp}{\mathbf P}
\newcommand{\Ll}{\mathcal L}
\def \a {{(1)}}
\def \b {{(2)}}
\def \i {{(i)}}
\begin{document}

\author[C.~Ceci]{Claudia  Ceci}
\address{Claudia  Ceci, Department MEMOTEF,
University of Rome Sapienza, Via del Castro Laurenziano, 9,
I-00161 Rome, Italy.}\email{claudia.ceci@uniroma1.it}
\author[A.~Cretarola]{Alessandra Cretarola\, 
}
\address{Alessandra Cretarola, Department of Economics, University ``G. D'Annunzio'' of Chieti-Pescara,
Viale Pindaro, 42, I-65127 Pescara, Italy.}\email{alessandra.cretarola@unich.it}

\title[Optimal reinsurance in a dynamic contagion model]{Optimal reinsurance in a dynamic contagion model: comparing self-exciting and externally-exciting risks}

\begin{abstract}
We investigate the optimal reinsurance problem in a risk model with jump clustering features.
This modeling framework is inspired by the concept initially proposed in  \citet{Dassios_Zhao_2011}, 
 combining Hawkes and Cox processes with shot noise intensity models. 
Specifically, these processes describe self-exciting and externally excited jumps in the claim arrival intensity, respectively.  
The insurer aims to maximize the expected exponential utility of terminal wealth 
for general reinsurance contracts and reinsurance premiums. We discuss two different methodologies: the classical stochastic control approach based on the Hamilton-Jacobi-Bellman (HJB) equation and a backward stochastic differential equation (BSDE) approach.
In a Markovian setting, differently from the classical HJB-approach, the BSDE method 
enables us to solve the problem without imposing any requirements for regularity on the associated value function.
We provide a Verification Theorem in terms of a suitable BSDE driven by a two-dimensional marked point process and we prove an existence result relaying on the theory developed in \citet{Papa_Possa_Sapla2018} for stochastic Lipschitz generators.  
After discussing the optimal strategy for general reinsurance contracts and reinsurance premiums, we provide more explicit results in some relevant cases. Finally, 
we provide comparison results 
that highlight the heightened risk stemming from the self-exciting component in contrast to the externally-excited counterpart and discuss the monotonicity property of the value function.  
\end{abstract}

\maketitle

\noindent {\bf Keywords}: Optimal reinsurance; Stochastic control; BSDEs; Hamilton-Jacobi-Bellman equation; Hawkes processes; Cox processes.

\noindent {\bf JEL classification}: C02; G22. 

\section{Introduction}

\noindent Optimal reinsurance and optimal investment problems for various risk models have gained a lot of interest in the actuarial literature in recent years. Thanks to the development of effective strategies, 
insurers can reduce potential claim risk (insurance risk) and optimize capital investments. Indeed, 
acquiring reinsurance serves as a safeguard for insurers against unfavorable claim experiences, while investing also enables insurers to diversify risks and potentially achieve higher returns on the cash flows within their insurance portfolio.
Within the extensive body of literature devoted to risk theory,
a classical task is to deal with optimal risk control and optimal asset allocation for an insurer. Mainly in the case of classical reinsurance contracts such as proportional and excess-of-loss, different decision criteria have been adopted in the study of these problems, e.g., ruin probability minimization, dividend optimization, and expected utility maximization.
Here, we focus on the latter approach (see \citet{Irgens_Paulsen,Mania2010,Brachetta_Ceci_2019} and references therein). Earlier seminal papers on the topic adopt a diffusive dynamics for the surplus process, whereas more recent literature explores surplus processes that incorporate jumps.\\
\indent The first risk model specification incorporating jumps in nonlife insurance is represented by the classical Cramér-Lundberg model, in which the claim arrival process follows a Poisson process with constant intensity.
Since it is an assumption which is seriously violated in a large number of insurance contexts (e.g., climate risks), many researchers have suggested to employ a stochastic intensity for the claim arrival dynamics. For instance, clustering features due to exogenous (externally excited) factors, such as earthquakes, flood, and hurricanes, might be captured using a Cox process; see, e.g. \citet{Albrecher_Asmussen,Bjork_Grandell,Embrechts_Schmidli_Grandell}. Moreover, clustering effects due to endogenous (self-excited) factors, such as aggressive driving habits and poor health conditions, can be effectively described by a Hawkes process, see e.g. \citet{Hawkes_1971}. A dynamic contagion model was introduced in 
\citet{Dassios_Zhao_2011} by generalizing both the Cox process with shot noise intensity and the Hawkes process.\\
In recent years, \citet{Cao_Landriault_Li} analyzed the optimal reinsurance-investment problem for the compound dynamic contagion process introduced by \citet{Dassios_Zhao_2011} via the time-consistent mean–variance criterion. Very recently, \citet{brachetta_call_ceci_sgarra}  investigated the optimal reinsurance strategy for a risk model with jump clustering characteristics similar to that proposed by \citet{Dassios_Zhao_2011} under partial information. \\
\indent In this work, 
we study the optimal reinsurance problem by maximizing the expected utility in the risk model with jump clustering properties introduced in \citet{brachetta_call_ceci_sgarra} with complete information for general reinsurance contracts. Note that, the  problem considered in \citet{brachetta_call_ceci_sgarra} is
the same but analyzed in a partial information setting. 
The study of the problem in the case of complete information is not addressed in the literature, and furthermore, it allows for comparative analyses between self-exciting and externally-exciting risks in a more tractable context than that of partial information. 
We discuss two different methodologies: the classical stochastic control approach based on the Hamilton-Jacobi-Bellman (HJB) equation and a backward stochastic differential equation  (BSDE) approach. 
It is important to stress that 
proving the existence of a classical solution to the HJB equation corresponding to the optimal stochastic control problem under investigation is challenging due to its inherent complexity. This difficulty stems from the equation's nature as a partial integro-differential equation, compounded by an optimization component embedded within the associated integro-differential operator.
Under specific assumptions and in a risk model where jump clustering is driven solely by a self-exciting component, \citet{wu2024optimal} derive the optimal reinsurance strategy and corresponding closed-form value function using the HJB approach. It is important to highlight that, unlike their model, our framework incorporates self-exciting jumps that depend on claim size, along with an additional externally-excited jump component. Moreover, while they focus exclusively on the expected value principle, we consider the problem in a more general setting without adopting a specific premium principle. Given the added complexity introduced by these features, the standard HJB framework becomes less tractable in our contagion model.
This motivated the application of 
 an alternative approach based on BSDEs. 
It should be noted that the resulting BSDE, whose unique solution characterizes the value process,
differs from that studied \citet{brachetta_call_ceci_sgarra}, due to the presence of an additional jump component. 
We provide a Verification result (see Theorem \ref{T1}) by proving that any solution to a suitable BSDE driven by two pure jump processes coincides with the Snell Envelope associated to null reinsurance.  
Next, we discuss existence of solution to our BSDE in Theorem \ref{T2}. Most of the literature on BSDEs with jumps requires the Lipschitz  property of the generator (see e.g. \citet{lim-quenez, jeanblanc2015, kazi-tani2015, cfj2016, abdel2022} and references therein).
It is important to note that due to the unboundedness of the claim arrival intensity, our BSDE satisfies only a stochastic Lipschitz condition. Therefore, we apply the theory developed in \citet{Papa_Possa_Sapla2018} for multidimensional BSDEs driven by a general martingale assuming a stochastic Lipschitz generator. The application of the aforementioned result also requires one to show the validity of the martingale representation property, see Proposition \ref{mg representation}.
 Theorems \ref{T1}, \ref{T2} are summarized in Corollary \ref{corollary}, where we characterize the value process and optimal strategies. In a Markovian setting we provide more insight into the structure of optimal strategies aligning with the results obtained via the HJB-approach under the additional regularity assumption of  the value function.  
 Our findings suggest that mitigating the risk stemming from externally-excited jumps can only be accomplished through adjusting the premium rate. While, mitigating the self-exciting effect requires adjustments to both the premium rate and the reinsurance strategy.
In addition, the case of Cox process with shot noise intensity has been discussed in detail and explicit expressions for the optimal strategy are provided in some cases of interest.
Finally, we explore the monotonicity property of the value function, which indicates a more conservative stance adopted by the insurer in the contagion model compared to the Cox model whether employing proportional reinsurance or limited excess of loss reinsurance with a fixed maximum coverage.\\
\indent The paper is organized as follows. Section \ref{sec:model} introduces the mathematical framework including the dynamic contagion process. Section \ref{sec:formulation} formally introduces the problem under investigation, which involves the controlled surplus process and the objective function. In Section \ref{HJB section} we discuss the HJB approach in order to solve the resulting optimal stochastic control problem. The characterization of the value process and the optimal strategy via a suitable BSDE can be found in Section \ref{sec:bsde}. Section \ref{Optimal_Reinsurance} provides the representation of the optimal reinsurance strategy in a Markovian framework for general premiums and more explicit results in some special cases. In Section \ref{sec:comparison} we perform a comparison analysis which confirms the risk due to the self-exciting component and discuss the monotonicity property of the value function.
Finally, all technical proofs and some auxiliary results are collected in Appendix \ref{appendix:proofs}.

\section{The mathematical framework}\label{sec:model}

Let $(\Omega,\F, \mathbf P;\bF)$ be a filtered probability space and assume that the filtration $\bF=\{\F_t, \ t \in [0,T]\}$ satisfies the usual conditions of completeness and right-continuity. Here, $T>0$ is a fixed time horizon that represents the maturity of a reinsurance contract. 


We consider the dynamic contagion process proposed in \citet{brachetta_call_ceci_sgarra},
which generalizes the Hawkes and Cox processes with shot noise intensity introduced by \citet{Dassios_Zhao_2011}. More precisely, the claim counting process $N^\a=\{N_t^\a,\ t \in [0,T]\}$ has the $(\bF,\Pp)$-stochastic intensity process $\Lambda=\{\lambda_t,\ t \in [0,T]\}$ given by
\begin{equation}\label{intensity}
\lambda_t = \beta + (\lambda_0 - \beta)  e^{-\alpha t} + \sum_{j=1}^{N^\a_t} e^{-\alpha (t -  T^\a_j)} \ell(Z^\a_j) +  \sum_{j=1}^{N^\b_t}  e^{-\alpha (t - T^\b_j)} Z^\b_j ,\quad t \in [0,T],
\end{equation}
where
\begin{itemize}
\item  $\beta > 0$ is the constant reversion level;
\item  $\lambda_0 >0$ is the initial value of $\Lambda$;
\item $\alpha >0$ is the constant rate of exponential decay;
\item $N^\b=\{N_t^\b,\ t \in [0,T]\}$ is a Poisson process with constant intensity $\rho>0$;
\item $\{T^\a_n\}_{n \geq 1}$ are the jump times of $N^\a$, i.e., the time instants when claims are reported;
\item $\{T^\b_n\}_{n \geq 1}$  are the jump times of $N^\b$, i.e., when exogenous/external factors make intensity jump;
\item $N^\a$ and $N^\b$ do not have common jump times;
\item $\{Z^\a_n\}_{n \geq 1}$ represent the claim size and they are modeled as a sequence of i.i.d. $\R^+$-valued random variables with  distribution function $F^\a : [0, + \infty) \to [0,1]$ such that $\mathbb E[Z^\a] < + \infty$;
\item $\ell : [0, +\infty) \to [0, +\infty)$ is a measurable function (for instance we could take $\ell(z) = a z$, $a>0$, and the self-exciting jumps would be proportional  to  claims sizes) such that $\mathbb E[ \ell(Z^\a)] < + \infty$;
\item $\{Z^\b_n\}_{n \geq 1}$ are the externally-excited jumps and they are modeled as a sequence of i.i.d. $\R^+$-valued random variables with  distribution function $F^\b : [0, + \infty) \to [0,1]$, such that $\mathbb E[Z^\b] < + \infty$.
\end{itemize}

\noindent Note that the counting process $N^{(1)}$ is defined via its intensity $\Lambda$ in equation \eqref{intensity}, which in turn depends on the history of $N^{(1)}$. So, an apparent logical loop seems to arise concerning the existence of $\Lambda$. For more details, refer to \citet{brachetta_call_ceci_sgarra}.
The following assumption will hold from now on:
\begin{assumption}\label{ass:indep}
We assume $N^{(2)}$,  $\{Z^\a_n\}_{n \geq 1}$ and $\{Z^\b_n\}_{n \geq 1}$ to be independent of each other.
\end{assumption}

\noindent We introduce the cumulative claim process $C = \{ C_t,\  t \in [0,T] \}$ defined  at time $t$ as
\begin{equation}\label{loss}
C_t = \sum_{j=1}^{N^\a_t} Z^\a_j,\quad t \in [0,T].
\end{equation}
and the integer-valued random measures $m^\i(\ud t, \ud z)$, $i=1,2$
\begin{equation}
\label{eqn:m1m2}
m^\i(\ud t, \ud z) = \sum_{n \geq 1} \delta_{(T_n^\i, Z_n^\i)}(\ud t, \ud  z) 1\!\!1_{\{ T_n^\i <+ \infty\}},
\end{equation}
where $\delta_{(t,z)}$ denotes the Dirac measure in $(t,z)$. We recall from \citet{brachetta_call_ceci_sgarra} the change of measure which allows to introduce the dynamic contagion model via a rigorous construction,  starting from two Poisson processes  $N^\a, N^\b$ with intensity $1$ and $\rho$ on a given probability
space $(\Omega,\F,\mathbf Q;\mathbb F)$ and two sequences of $\{Z^\a_n\}_{n \geq 1}$, $\{Z^\b_n\}_{n \geq 1}$ of i.i.d. positive random variables with distribution functions $F^\a$ and $F^\b$, respectively. We assume $N^\a$, $N^{(2)}$,  $\{Z^\a_n\}_{n \geq 1}$ and $\{Z^\b_n\}_{n \geq 1}$ to be independent of each other under $\mathbf Q$.  Specifically, 
under the following assumption:
\begin{assumption}\label{nuova_bis}
There exists $\varepsilon >0$ such that
$$\mathbb E^{\Q}\left[ e^{\varepsilon \ell(Z^\a)} \right] <+ \infty,
	\quad \mathbb E^{\Q} \left[ e^{\varepsilon Z^\b} \right] <+ \infty,$$
\end{assumption}
\noindent it is possible to define the equivalent probability measure $\Pp$ via
$
\frac{\ud \Pp}{\ud \Q} {\Big \vert  _{\mathcal F_T}} = L_T,
$
where $L_T$ is the final value of the $(\bF,\Q)$-martingale $L=\{L_t,\ t \in [0,T]\}$ given by
\begin{equation}
\label{eqn:L}
L_t = e^{ -\int_0^t (\lambda_{s} - 1) \ud s  + \int_0^t \ln (\lambda_{s^-})  \ud N_s^{(1)} }, \quad t \in [0,T].
\end{equation}
In view of the model construction,
we can safely introduce the $(\bF,\Pp)$-compensator measures of $m^\i(\ud t, \ud z), i=1,2$.
\begin{remark}\label{projection}
By the Girsanov Theorem the $(\bF,\Pp)$-predictable projections measures (the so-called compensator measures) of $m^\a(\ud t,\ud z)$ and $m^\b(\ud t,\ud z)$, see \eqref{eqn:m1m2},  are given respectively by
\begin{equation} \label{dualpred}
\nu^\a(\ud t, \ud z)= \lambda_{t^-} F^\a(\ud z) \ud t, \quad \nu^\b(\ud t, \ud z)= \rho F^\b(\ud z) \ud t.
\end{equation}
In particular, $N^\a$ is a point process with $(\bF,\Pp)$-predictable intensity $\{\lambda_{t^-},\ t \in [0,T]\}$, while $N^\b$ remains a point process with constant $(\bF,\Pp)$-intensity $\rho>0$.\\
It turns out that for any $\bF$-predictable nonnegative random field $\{H(t,z),\ t \in [0,T],\ z \in [0, + \infty)\}$ and $i=1,2$
$$
\bE \left[ \int_0^t \int_0^{+\infty} H(s,z)m^\i(\ud s, \ud z) \right] = \bE \left[ \int_0^t \int_0^{+\infty} H(s,z)\nu^\i(\ud s, \ud z) \right], \quad  t \in [0,T],
$$
where $\nu^\i(\ud s, \ud z)$, $i=1,2$, are defined in \eqref{dualpred}.
Moreover, if 
$\bE \left[ \int_0^T \int_0^{+\infty} |H(s,z)|\nu^\i(\ud s, \ud z) \right] <+ \infty,$
then the process
$$ 
\left\{\int_0^t \int_0^{+\infty} H(s,z) \left( m^\i(\ud s, \ud z) - \nu^\i(\ud s, \ud z) \right), \ t \in [0,T]\right\},
$$
is an $(\bF,\Pp)$-martingale.
\end{remark}
\noindent Now, we recall the Markov structure of the intensity $\Lambda$. Equation \eqref{intensity} reads as
\begin{equation}\label{intensity_eq}
\ud \lambda_t = \alpha (\beta - \lambda_t ) \ud t+ \int_0^{+ \infty} \ell(z) m^\a(\ud t , \ud z) + \int_0^{+ \infty}  z m^\b(\ud t , \ud z).
\end{equation}


\begin{proposition}\label{prop:generatore}
The process $\Lambda$ is an $(\bF,\Pp)$-Markov process with generator
\begin{equation}\label{generatore}
{\Ll} f( \lambda) =\alpha (\beta - \lambda) f' (\lambda) +
\int_0^{+ \infty} [ f(\lambda + \ell(z)) - f(\lambda) ] \lambda F^\a (\ud z) \nonumber + \int_0^{+ \infty} [ f(\lambda + z) - f(\lambda) ]  \rho F^\b (\ud z).
\end{equation}
The domain of the generator ${\Ll}$ contains the class of functions $f\in C^{1}(0, + \infty)$ such that
\begin{equation}
 \begin{split}
 &\bE \left[ \int_0^T \int_0^{+ \infty} |f(\lambda_s +\ell(z)) - f(\lambda_s)| \lambda_s  F^\a (\ud z)\ud s \right] <+ \infty, \\
 &\bE \left[ \int_0^T \int_0^{+ \infty}  |f(\lambda_s + z) - f(\lambda_s)|  F^\b (\ud z)\ud s \right] <+ \infty, \quad \bE \left[ \int_0^T \lambda_s |  f' (\lambda_s) | \ud s \right] <+ \infty.
 \end{split}
 \end{equation}

  \end{proposition}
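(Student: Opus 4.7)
The plan is to obtain the generator by applying Itô's formula for semimartingales with jumps to $f(\lambda_t)$ using the SDE representation \eqref{intensity_eq}, and then to conclude the Markov property from the martingale problem that this produces.

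First I would observe that $\Lambda$ is a pure-jump semimartingale whose finite-variation drift part is absolutely continuous with density $\alpha(\beta-\lambda_t)$, while its jump part is driven by $m^\a(\ud t,\ud z)$ and $m^\b(\ud t,\ud z)$ with jump sizes $\ell(z)$ and $z$ respectively. Hence, for $f\in C^1(0,+\infty)$, Itô's formula gives
\begin{align}
f(\lambda_t)=f(\lambda_0)+\int_0^t \alpha(\beta-\lambda_s)f'(\lambda_s)\ud s &+\int_0^t\!\!\int_0^{+\infty}\bigl[f(\lambda_{s^-}+\ell(z))-f(\lambda_{s^-})\bigr]m^\a(\ud s,\ud z)\\
&+\int_0^t\!\!\int_0^{+\infty}\bigl[f(\lambda_{s^-}+z)-f(\lambda_{s^-})\bigr]m^\b(\ud s,\ud z).
\end{align}

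Next I would compensate the two random measures using Remark \ref{projection}: the three integrability assumptions placed on $f$ in the statement are precisely the conditions allowing one to split each compensator-free integral into the sum of its compensated $(\bF,\Pp)$-martingale part and of $\int_0^t\mathcal{L}f(\lambda_s)\ud s$, with $\mathcal{L}f$ as in \eqref{generatore}. In particular, the first two domain conditions guarantee that the compensated integrals against $m^\a-\nu^\a$ and $m^\b-\nu^\b$ are true $(\bF,\Pp)$-martingales (not merely local ones), while the third controls the absolute integrability of the drift term $\alpha(\beta-\lambda_s)f'(\lambda_s)$ since $\beta$ is constant and $|\lambda_s f'(\lambda_s)|$ is integrable. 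This yields the Dynkin-type identity showing that $f(\lambda_t)-f(\lambda_0)-\int_0^t\mathcal{L}f(\lambda_s)\ud s$ is an $(\bF,\Pp)$-martingale, which identifies the generator.

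Finally I would derive the Markov property from the SDE \eqref{intensity_eq}: since both the drift $\alpha(\beta-\lambda_{t^-})$ and the $(\bF,\Pp)$-compensators $\nu^\a,\nu^\b$ depend on the past only through $\lambda_{t^-}$, classical results on SDEs driven by Poisson random measures (see, e.g., Jacod--Shiryaev, Ch.~III) give pathwise uniqueness and hence imply that the conditional law of $(\lambda_u)_{u\ge t}$ given $\F_t$ coincides with the law given $\lambda_t$. Equivalently, the well-posedness of the martingale problem for $\mathcal{L}$ on a sufficiently rich test class together with the Dynkin identity above yields the Markov property. The only delicate step is making sure the compensation produces genuine martingales; this is handled precisely by the integrability conditions stated for the domain, so no further technicalities are needed.
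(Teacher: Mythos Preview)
Your proposal is correct and follows exactly the approach the paper indicates: the paper's proof consists of the single sentence ``It is a direct application of It\^o's formula,'' which is precisely what you carry out in detail (with the compensation step and the domain conditions spelled out). Your additional remarks on deriving the Markov property via the SDE/martingale problem are more than the paper provides, but entirely in the same spirit.
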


\begin{proof}
It is a direct application of It\^o's formula.
\end{proof}
\noindent We introduce the following assumption, which is needed to prove Proposition \ref{mom} below.
\begin{assumption}\label{nuova1}
\[
 \bE[ (\ell(Z^\a)^k] < + \infty, \quad  \bE[ (Z^\b)^k] < + \infty, \quad \forall k=1,2, \dots.
\]
\end{assumption}
\begin{proposition}\label{mom}
Under Assumption \ref{nuova1}, for any $t \in [0,T]$, 
$\bE \left[ \int_0^t \lambda_s^k \ud s \right] <  + \infty, \quad \forall k=1,2, \dots.$
\end{proposition}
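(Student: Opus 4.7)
The plan is to establish the stronger bound $\sup_{t\in[0,T]}\bE[\lambda_{t}^{k}]<\infty$ for every $k\ge 1$ by induction on $k$, and then conclude via Fubini--Tonelli. The key tool is Dynkin's formula applied to $f(\lambda)=\lambda^{k}$, which by Proposition \ref{prop:generatore} gives
\[
\mathcal{L}\lambda^{k}=-\alpha k\lambda^{k}+\alpha\beta k\lambda^{k-1}+\sum_{j=0}^{k-1}\binom{k}{j}\lambda^{j+1}\bE[\ell(Z^{\a})^{k-j}]+\rho\sum_{j=0}^{k-1}\binom{k}{j}\lambda^{j}\bE[(Z^{\b})^{k-j}],
\]
where Assumption \ref{nuova1} ensures all the moment constants on the right are finite. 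Crucially, this can be written as $\mathcal{L}\lambda^{k}=C_{k}\lambda^{k}+R_{k}(\lambda)$ with $C_{k}=k(\bE[\ell(Z^{\a})]-\alpha)$ and $R_{k}$ a polynomial in $\lambda$ of degree $k-1$ whose coefficients depend only on $\alpha,\beta,\rho$ and the moments from Assumption \ref{nuova1}.

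To legitimize taking expectations I would first localize. Set $\tau_{n}=\inf\{t\ge 0:\lambda_{t}\ge n\}\wedge T$; since $\Pp$ is equivalent to the reference measure $\Q$ of Remark \ref{projection}, under which $N^{\a}$ and $N^{\b}$ are Poisson (hence non-explosive on $[0,T]$), and between jumps $\lambda$ solves the ODE $\dot\lambda=\alpha(\beta-\lambda)$, one verifies that $\tau_{n}\uparrow T$ $\Pp$-a.s. Moreover $\lambda_{s^{-}}\le n$ on $[0,\tau_{n}]$, so the integrability conditions in Proposition \ref{prop:generatore} are met (by Assumption \ref{nuova1}) and the compensated jump integrals appearing in the It\^o expansion of $\lambda_{\cdot\wedge\tau_{n}}^{k}$ are true $(\bF,\Pp)$-martingales. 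Dynkin's formula then yields
\[
\bE\bigl[\lambda_{t\wedge\tau_{n}}^{k}\bigr]=\lambda_{0}^{k}+\bE\!\left[\int_{0}^{t\wedge\tau_{n}}\mathcal{L}\lambda_{s}^{k}\,\ud s\right].
\]

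For $k=1$ the integrand becomes $\alpha\beta-\alpha\lambda_{s}+\bE[\ell(Z^{\a})]\lambda_{s}+\rho\bE[Z^{\b}]$; Gronwall's inequality in the variable $n\mapsto\bE[\lambda_{t\wedge\tau_{n}}]$ and Fatou's lemma as $n\to\infty$ produce a finite constant $K_{1}$ with $\sup_{t\in[0,T]}\bE[\lambda_{t}]\le K_{1}$. Inductively assuming $\sup_{t\in[0,T]}\bE[\lambda_{t}^{j}]\le K_{j}<\infty$ for $j=1,\dots,k-1$, the polynomial $R_{k}$ contributes a finite quantity $B_{k}(T)$ depending on $K_{1},\dots,K_{k-1}$, giving
\[
\bE\bigl[\lambda_{t\wedge\tau_{n}}^{k}\bigr]\le\lambda_{0}^{k}+B_{k}(T)+|C_{k}|\int_{0}^{t}\bE\bigl[\lambda_{s\wedge\tau_{n}}^{k}\bigr]\ud s,
\]
and a second application of Gronwall plus Fatou yields $\sup_{t\in[0,T]}\bE[\lambda_{t}^{k}]\le K_{k}<\infty$. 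The claim of the proposition then follows from Fubini--Tonelli: $\bE[\int_{0}^{t}\lambda_{s}^{k}\ud s]=\int_{0}^{t}\bE[\lambda_{s}^{k}]\ud s\le tK_{k}$.

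The main technical obstacle is the rigorous use of Dynkin's formula for the unbounded functions $\lambda\mapsto\lambda^{k}$: the whole argument hinges on showing that $\tau_{n}\uparrow T$ $\Pp$-a.s.\ (which uses the equivalent change of measure from $\Q$ and the linear drift between jumps) and that the stopped compensated integrals are genuine martingales rather than merely local ones. Once this localization step is secured, the two Gronwall/Fatou passes are routine and the induction closes.
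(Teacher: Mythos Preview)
Your argument is correct; the paper itself does not prove this result but simply refers to \citet[Proposition~2.10]{brachetta_call_ceci_sgarra}. Your route --- It\^o's formula applied to $\lambda^{k}$, localization via $\tau_{n}=\inf\{t:\lambda_{t}\ge n\}\wedge T$ to turn the compensated jump integrals into true martingales, then an induction on $k$ closed by Gronwall and Fatou --- is the standard method for moment bounds of affine jump processes and is almost certainly what the referenced argument carries out as well.
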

\begin{proof} See \citet[Proposition 2.10]{brachetta_call_ceci_sgarra}.
\end{proof}

\section{Problem formulation}\label{sec:formulation}

We introduce the optimal reinsurance problem from the primary insurer point of view.
The primary insurer aims to subscribe to a reinsurance contract in order to optimally manage her wealth.
The dynamics of the surplus process $R=\{R_t,\ t \in [0,T]\}$ without reinsurance 
is given by
\begin{equation}
\label{eqn:surplus_u=0}
\ud R_t = c_t\,\ud t - \int_0^{+\infty} z \,m^\a(\ud t,\ud z), \qquad R_0\in\mathbb{R}^+,
\end{equation}
where $c=\{c_t,\ t\in[0,T] \}$ denotes the insurance premium, which is assumed to be an $\bF$-predictable process and such that $\mathbb E \left[ \int_0^T c_t \ud t \right]< +\infty$ and $R_0>0$ is the initial capital.
The insurer can choose any reinsurance arrangement in a given class of admissible contracts, which are parametrized by a $n$-uple $u$ (the control) taking values in $U\subseteq\overline{\mathbb{R}}^n$, with $n\in\mathbb{N}$ and $\overline{\mathbb{R}}$ denoting the compactification of $\mathbb{R}$. 
Under an admissible strategy $u\in \mathcal{U}$ (the definition of admissibility set $\mathcal{U}$ will be given in Definition \ref{def:U} below),
it retains the amount $\Phi(Z^\a_j,u_{T^\a_j})$ of the $j$-th claim, while the remaining $Z_j^{(1)} - \Phi(Z^\a_j,u_{T^\a_j})$ is paid by the reinsurer.
Specifically, under an admissible strategy $u = \{ u_t,\ t \in [0,T]\}$ the aggregate losses process covered by the insurer, denoted by $C^u=\{ C^u_t,\ t \in [0,T]\}$, reads as
$$
C^u_t = \sum_{j=1}^{N^\a_t} \Phi(Z^\a_j, u_{T^\a_j}), \quad t \in [0,T],
$$
so that the remaining losses $(C - C^u)$, with $C$ defined by \eqref{loss}, will be undertaken by the reinsurer.
We suppose that $\Phi(z,u)$ the retention function is continuous in $u$ and there exist at least two points $u_N,u_M\in U$ such that
\[
0\le\Phi(z,u_M) \le \Phi(z,u) \le \Phi(z,u_N) = z \qquad \forall (z,u)\in[0,+\infty)\times U,
\]
so that $u=u_N$ corresponds to null reinsurance, while $u=u_M$ represents the maximum reinsurance protection. Note that $u_M$ corresponds to full reinsurance when applicable.

\begin{example}\label{ex_reinsurance}
For the reader's convenience, we recall \citet[Example 4.2]{brachetta_call_ceci_sgarra}, which shows how
standard reinsurance contracts fit our modeling framework.
\begin{enumerate}
\item Under {\em proportional reinsurance}, the insurer transfers a percentage $(1-u)$ of any future loss to the reinsurer, so we set
\[
\Phi (z,u) = uz, \qquad u \in [0,1].
\]
Selecting the scalar $u\in[0,1]=:U$ is equivalent to choosing the retention level of the contract. Notice that here $u_N=1$ means no reinsurance and $u_M=0$ corresponding to full reinsurance.
\item Under an {\em excess-of-loss reinsurance} policy, the reinsurer covers all the losses exceeding a retention level $u$, hence we fix the class of all the functions with this form:
\[
\Phi (z,u) = u \wedge z, \qquad u \in [0, +\infty ].
\]
So, here $U:=[0,+\infty]$, $u_N=+\infty$ and $u_M=0$ corresponds, to full reinsurance. 
\item Under a {\em limited excess of loss reinsurance}, for any claim the reinsurer covers the losses exceeding a threshold $u_1$, up to a maximum level $u_2>u_1$, so that the maximum loss is limited to $(u_2-u_1)$ on the reinsurer's side. In this case:
\[
\Phi (z,u) = z-  (z- u_1 )^{+} + (z- u_2 )^{+},
\]
so that $U=\{ (u_1,u_2): u_1\ge0, u_2\in[u_1,+\infty] \}$ and $u=(u_1,u_2)$. Clearly, we have that $u_M = (u_{M,1}, u_{M,2})=(0,+\infty)$ and $u_N$ can be any point on the line $u_1=u_2$.
A special case is the so-called {\em limited excess of loss with fixed reinsurance coverage}, in which $u_2 = u_1 + \beta_M$, with $\beta_M >0$. Here, $U=[0,+\infty]$, $u_N=+ \infty$ and $u_M = 0$ corresponds to the maximum reinsurance coverage $\beta_M$. For $\beta_M = + \infty$ this case reduces to the excess-of-loss reinsurance.
\end{enumerate}
\end{example}
\noindent The insurer will have to pay a reinsurance premium $q^u=\{q^u_t,\ t\in[0,T] \}$, which depends on the strategy $u$, satisfying
the following assumptions.
\begin{assumption}
 The reinsurance premium  admits the following representation:
\begin{equation}\label{eqn:q_of_u}
q^u_t (\omega) = q (t, \omega , u ) \quad \forall (t, \omega , u ) \in [0,T] \times \Omega \times U,
\end{equation}
for a given function $q (t, \omega , u )\colon[0,T] \times \Omega \times U \rightarrow [0,+\infty) $ continuous in $u$, $\bF$-predictable and with continuous partial derivatives $\ds \frac{\partial q (t, \omega , u )}{\partial u_i} $, $i=1,\dots,n$.
Moreover, for any $(t,\omega) \in [0,T] \times \Omega$, 
\[
q (t, \omega , u_N )= 0, \quad q (t, \omega , u ) \leq q (t, \omega , u_M ), \quad \forall u\in U,
\]
since a null protection is not expensive and the maximum reinsurance is the most expensive. 
\end{assumption}
\noindent In the following $q^u$ will denote the reinsurance premium associated with the dynamic reinsurance strategy $\{u_t,\ t \in [0,T] \}$. Notice that both insurance and reinsurance premiums are assumed to be $\bF$-predictable, since the insurer and the reinsurer share the same information.  Finally, we require the following integrability condition:
$$ \mathbb{E}\Big[ \int_0^T q^{u_M}_t \ud t \Big] < +\infty,$$
which ensures that for any $u \in\mathcal{U}$, we have
$
\mathbb{E} \left[ \int_{0}^{T} q^u_s ds \right] < +\infty.
$

\noindent Summarizing, the surplus process $R^u=\{R_t^u,\ t \in [0,T]\}$ with reinsurance evolves according to
\begin{equation}
\label{eqn:surplus}
dR^u_t = \bigl(c_t-q^u_t\bigl)\,\ud t - \ud C^u_t = \left( c_t-q^u_t \right) \ud t - \int_0^{+\infty} \Phi(z, u_t) \, m^\a(\ud t,\ud z), \qquad R^u_0= R_0\in\mathbb{R}^+.
\end{equation}
Moreover, the insurer invests her surplus in a risk-free asset with constant interest rate $r \in \R^+$, so that for any reinsurance strategy $u\in\mathcal{U}$ the wealth $X^u=\{X_t^u,\ t \in [0,T]\}$ satisfies the following SDE
\begin{equation}
\label{eqn:X}
\ud X^u_t = \ud R^u_t + rX^u_t\,\ud t, \qquad X^u_0= R_0\in\mathbb{R}^+ ,
\end{equation}
whose solution is given by
\begin{equation}
\label{eqn:X_explicit}
X^u_t = R_0e^{rt} + \int_0^t e^{r(t-s)} \left( c_s-q^u_s \right) \,\ud s
-\int_0^t\int_0^{+\infty} e^{r(t-s)} \Phi(z, u_s)  \,m^{(1)}(\ud s,\ud z), \quad t \in [0,T].
\end{equation}
\begin{remark}
Notice that the stochastic wealth $X^u$ can possibly take negative values, due to the possibility of borrowing money from the bank account.
\end{remark}

As mentioned earlier, the insurer aims at optimally controlling her wealth using reinsurance. More formally, she aims at maximizing the expected exponential utility of terminal wealth over the class $\mathcal U$, that is,
\[
\sup_{u\in\mathcal{U}}\mathbb{E}\bigl[ 1-e^{-\eta X^u_T} \bigr],
\]
which turns out trivially to be equivalent to the minimization problem
\begin{equation}\label{eqn:minpb}
\inf_{u\in\mathcal{U}}\mathbb{E}\bigl[ e^{-\eta X^u_T} \bigr],
\end{equation}
where $\eta \in \R^+$ denotes the insurer risk aversion.

\begin{definition}
\label{def:U}
We denote by $\mathcal{U}$ the class of admissible strategies, which are all the $U$-valued and predictable processes, $\{u_t, t\in[0,T]\}$,  such that $\mathbb{E}\bigl[ e^{-\eta X^u_T} \bigr] < +\infty$.
Given $t \in [0,T]$, we will denote by $\mathcal{U}_t$ the class $\mathcal{U}$ restricted to the time interval $[t,T]$.
\end{definition}
\noindent The next assumptions are required in the sequel.
\begin{assumption}\label{ass_app_premium}
	We assume that for every $a >0$
	\begin{itemize}
	\item[i)]
	$ \mathbb E \left[ e^{a \ell(Z^\a)} \right] < +\infty, \quad \mathbb E  \left[ e^{a Z^\a} \right] < +\infty, \quad \mathbb E  \left[ e^{a Z^\b} \right] < +\infty;$
	\item[ii)]
	$
	\mathbb E\left[ e^{a \int_0^T q^{u_M}_t\,\ud t} \right] < +\infty.
	$
	\end{itemize}
\end{assumption}

\begin{proposition}\label{ADM}
    Under Assumption \ref{ass_app_premium},  we have that 
    \begin{itemize}
	\item[(a)]
 for every $a>0$, $\mathbb{E}[e^{a C_T}] < +\infty$ and  
 $\mathbb{E}[e^{a \int_0^T \lambda_s \ud s}] < +\infty$;
 
 \item[(b)] any $U$-valued and $\bF$-predictable process is admissible according to Definition \ref{def:U}.
\end{itemize}
 
\end{proposition}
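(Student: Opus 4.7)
From \eqref{eqn:X_explicit} together with $c_t\geq 0$, $q^u_t\leq q^{u_M}_t$ and $\Phi(z,u_t) \leq z$, I would obtain the pointwise bound $-\eta X_T^u \leq -\eta R_0 e^{rT} + \eta e^{rT}\!\int_0^T q^{u_M}_s\,\ud s + \eta e^{rT} C_T$. Cauchy--Schwarz then bounds $\mathbb{E}[e^{-\eta X_T^u}]$ by a product of two factors, finite respectively under Assumption \ref{ass_app_premium}(ii) and by part (a) applied with $a=2\eta e^{rT}$. This yields admissibility for every $U$-valued, $\bF$-predictable $u$.

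\textbf{Pathwise reduction and the externally-excited piece.} Integrating the SDE \eqref{intensity_eq} on $[0,T]$ and using $\lambda_T\geq 0$ yields the pathwise bound
\[
\int_0^T \lambda_s\,\ud s \;\leq\; \frac{\lambda_0 + \alpha\beta T}{\alpha} \;+\; \frac{1}{\alpha}\Big(J_T^{(1)} + J_T^{(2)}\Big),
\]
where $J_T^{(1)} := \int_0^T\!\int_0^{+\infty} \ell(z)\,m^{(1)}(\ud s,\ud z)$ and $J_T^{(2)} := \int_0^T\!\int_0^{+\infty} z\,m^{(2)}(\ud s,\ud z)$, while $C_T = \int_0^T\!\int_0^{+\infty} z\,m^{(1)}(\ud s,\ud z)$. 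Cauchy--Schwarz reduces (a) to controlling exponential moments of these $m^{(i)}$-integrals. For $i=2$, Assumption \ref{ass:indep} and Remark \ref{projection} make $N^{(2)}$ a Poisson$(\rho)$ process independent of $\{Z_n^{(2)}\}$ under $\mathbf{P}$, yielding the closed form $\mathbb{E}[\exp(bJ_T^{(2)})] = \exp(\rho T(\mathbb{E}[e^{bZ^{(2)}}] - 1)) < +\infty$ for every $b>0$, by Assumption \ref{ass_app_premium}(i).

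\textbf{Self-exciting piece.} For $J_T^{(1)}$ and $C_T$ I would exploit the Markov generator from Proposition \ref{prop:generatore} through an exponential Ansatz: look for deterministic $\phi,\psi:[0,T]\to\mathbb{R}$ with $\phi(T)=\psi(T)=0$ such that $V_t := \exp\bigl(a\!\int_0^t\lambda_s\,\ud s + \phi(t)\lambda_t + \psi(t)\bigr)$ is a local martingale. Itô's formula and the form of the generator force the ODE system
\[
\phi'(t) = \alpha\phi(t) - a - \bigl(\mathbb{E}[e^{\phi(t)\ell(Z^{(1)})}]-1\bigr), \qquad \psi'(t) = -\alpha\beta\phi(t) - \rho\bigl(\mathbb{E}[e^{\phi(t)Z^{(2)}}]-1\bigr).
\]
Localizing with $\tau_n = \inf\{t:\lambda_t\geq n\}\wedge T$ and invoking Fatou converts the local martingale property into the supermartingale inequality $\mathbb{E}[V_T]\leq V_0 = e^{\phi(0)\lambda_0+\psi(0)}$, proving $\mathbb{E}[\exp(a\!\int_0^T\lambda_s\,\ud s)]<+\infty$. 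The MGF bound $\mathbb{E}[e^{aC_T}]$ is then handled identically by appending $a\!\int_0^t\!\int_0^{+\infty} z\,m^{(1)}(\ud u,\ud z)$ to the exponent of $V_t$, which only modifies the $m^{(1)}$-jump contribution and hence the nonlinearity in the ODE for $\phi$.

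\textbf{Main obstacle.} The delicate step is proving global solvability on $[0,T]$ of the nonlinear ODE for $\phi$, since $\mathbb{E}[e^{\phi(t)\ell(Z^{(1)})}]$ grows rapidly; Assumption \ref{ass_app_premium}(i) is exactly what ensures this MGF is well defined for every $\phi\geq 0$, while the exponential-decay rate $\alpha>0$ is needed to bound the nonlinear term. A further subtlety is justifying the localization that upgrades $V$ from a local martingale into a true supermartingale, for which the polynomial-moment bounds of Proposition \ref{mom} provide the required uniform integrability as $n\to +\infty$.
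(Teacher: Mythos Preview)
Your argument for (b) is correct and coincides with the paper's: both bound $e^{-\eta X_T^u}$ by a $q^{u_M}$-factor times a $C_T$-factor and split via $ab\le\tfrac12(a^2+b^2)$ (equivalently Cauchy--Schwarz), invoking Assumption~\ref{ass_app_premium}~ii) and part (a). For (a) the paper gives no self-contained argument and simply cites \citet[Lemma~4.6 and Lemma~B.1]{brachetta_call_ceci_sgarra}, so your direct route is necessarily different; your pathwise bound from integrating \eqref{intensity_eq}, and your treatment of the externally-excited piece $J_T^{(2)}$, are both correct.

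The gap is exactly where you locate it, and your proposed justification does not close it. Written backward as $\tilde\phi'(s)=-\alpha\tilde\phi(s)+a+\mathbb{E}\bigl[e^{\tilde\phi(s)\ell(Z^{(1)})}\bigr]-1$, $\tilde\phi(0)=0$, the right-hand side is bounded below by $a+(\mu_1-\alpha)\tilde\phi+\tfrac{\mu_2}{2}\tilde\phi^{2}$ with $\mu_k=\mathbb{E}[\ell(Z^{(1)})^k]$, so whenever $\mu_2>0$ the linear damping $\alpha\tilde\phi$ cannot control the quadratic growth: comparison with $y'=a+\tfrac{\mu_2}{2}y^2$ shows finite-time blow-up, with blow-up time of order $(a\mu_2)^{-1/2}\to 0$ as $a\to\infty$. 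Assumption~\ref{ass_app_premium}~i) only guarantees that the MGF of $\ell(Z^{(1)})$ is finite for every argument, not that it grows slowly enough to make the Riccati ODE globally solvable. Consequently, for any fixed $T$ and any nontrivial self-exciting component, your exponential-martingale Ansatz produces no solution on $[0,T]$ once $a$ is large, and the method cannot deliver the conclusion ``for every $a>0$''. The argument in the cited reference proceeds instead through the equivalent measure $\Q$ of \eqref{eqn:L}, under which $m^{(1)}$ and $m^{(2)}$ become independent Poisson random measures with \emph{deterministic} compensators; your pathwise bound then reduces $\int_0^T\lambda_s\,\ud s$ and $C_T$ to compound-Poisson functionals whose $\Q$-exponential moments follow directly from Assumption~\ref{ass_app_premium}~i), and the remaining work is to transfer the estimate back to $\Pp$ via $L_T$. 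That change-of-measure mechanism, not the Riccati ODE, is what handles the self-exciting piece.
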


\begin{proof}
(a) It is proved in \citet[Lemma 4.6 and Lemma B.1]{brachetta_call_ceci_sgarra}.

(b) Since for each $u \in \mathcal U$, $t \in [0,T]$, $q_t^u \leq q_t^{u_M}$, and using the inequality $(a+b)^2 \leq \frac{1}{2}(a^2 + b^2)$, $a,b \in \R$, 
 we have for any $u \in \mathcal U$
\begin{align}
  \esp{ e^{-\eta X^u_T}}&=\esp{e^{-\eta R_0e^{rT}}e^{-\eta\int_0^T e^{r(T-s)} \left( c_s-q^u_s \right) \,\ud s}
e^{\eta\int_0^T\int_0^{+\infty} e^{r(T-s)} \Phi(z, u_s)  \,m^{(1)}(\ud s,\ud z)}}\\
& \leq \esp{e^{\eta\int_0^T e^{r(T-s)} q^{u_M}_s \,\ud s}
e^{\eta e^{r T}\int_0^T\int_0^{+\infty}  z  \,m^{(1)}(\ud s,\ud z)}}\\
& \leq \frac{1}{2}\left(\esp{e^{2\eta e^{rT} \int_0^T q^{u_M}_s \,\ud s}}+\esp{e^{2\eta e^{r T}C_T}}\right),
\end{align}
which is finite in view of Assumption \ref{ass_app_premium}  and (a).

\end{proof}

\begin{remark}
Insurance companies usually apply a maximum policy $D>0$, i.e., they only repay claims up to the amount $D$ to the policyholders. In this setting, claims' sizes are of the form $\min\{Z^\a_n, D\} \leq D$, hence condition $\mathbb E  \left[ e^{a Z^\a} \right] <+\infty$ in Assumption \ref{ass_app_premium} is trivially satisfied.
\end{remark}
\noindent We conclude the section by presenting the most commonly used premium principles.

\begin{example}[Premium principles] \label{ex_EVP}
Under any admissible reinsurance strategy $u\in\mathcal{U}$, the expected cumulative losses covered by the reinsurer in the interval $[0,t]$, with $t \leq T$, are given by
\[
 \mathbb{E}\left[ \int_0^t \int_0^{+\infty} ( z - \Phi(z,u_s)) \, m^\a( \ud s , \ud z) \right] = \mathbb{E}\left[ \int_0^t \int_0^{+\infty} ( z - \Phi(z,u_s)) \, \lambda_{s^-} F^\a(\ud z) \ud s \right].
 \]
 (i) According to the {\em expected value principle (EVP)}, the insurance premium $c$ is given by
 \begin{equation} 
c_t = (1+ \theta_I ) \lambda_{t^-}  \int_0^{+\infty}   z  F^\a(\ud z),
\end{equation}
where $\theta_I>0$ denotes the safety loading applied by the insurer, and
 the reinsurance premium $q^u$ has to satisfy
\[
 \begin{split}
\mathbb{E}\left[ \int_0^t  q_s^u \, \ud s \right] &= (1 + \theta_R) \mathbb{E}\left[ \int_0^t \int_0^{+\infty} ( z - \Phi(z,u_s)) \, \lambda_{s^-} F^\a(\ud z) \ud s \right], \quad \forall u\in\mathcal{U}, \ \forall t \in [0,T],
\end{split}
\]
where $\theta_R>0$ denotes the safety loading applied by reinsurer.
Thus,
\begin{equation}\label{ex_EVP1}
    q_t^u = (1+ \theta _R) \lambda_{t^-} \int_0^{+\infty}   \left(  z - \Phi(z,u_t) \right) F^\a(\ud z).
    \end{equation}
(ii) Under the {\em variance premium principle (VPP)}, the insurance and reinsurance premiums are given by
\begin{equation}\label{VP}
\begin{split}
& c_t = \lambda_{t^-} \left\{\int_0^{+\infty} z F^\a(\ud z)  +\eta_I \int_0^{+\infty}  z^2 F^\a(\ud z) \right \}, \\
    & q_t^u = \lambda_{t^-} \left\{ \int_0^{+\infty}   \left(  z - \Phi(z,u_t) \right) F^\a(\ud z) +\eta_R \int_0^{+\infty}   \left(  z - \Phi(z,u_t) \right)^2 F^\a(\ud z) \right \},
    \end{split}
    \end{equation}
    respectively, where $\eta_I>0$ and $\eta_R>0$ are the variance loadings applied by insurer and reinsurer, respectively.\\
(iii) Recently, a more general premium has been considered in the literature (see e.g. \citet{cao2023stackelberg}), the {\em mean-variance principle (MVP)} for which both the mean and variance loadings are allowed to depend on both time and loss 
    \begin{equation}\label{MVP}
    \begin{split}
& c_t = \lambda_{t^-} \int_0^{+\infty} \Big \{ (1 +  \theta _I(t,z)) z + \eta_I (t, z) z^2 \Big \} F^\a(\ud z), \\
    &q_t^u = \lambda_{t^-} \int_0^{+\infty} \Big \{ (1 +  \theta _R(t,z)) \left(  z - \Phi(z,u_t) \right) + \eta_R (t, z) \left(  z - \Phi(z,u_t) \right)^2 \Big \} F^\a(\ud z),
    \end{split}
    \end{equation}
    for some non-negative loading factors $\theta _I(t,z)$ and $\eta_I (t, z)$, $\theta _R(t,z)$ and $\eta_R (t, z)$, applied by the insurer and the reinsurer, respectively. 

\end{example}

\section{The Hamilton-Jacobi-Bellman equation}\label{HJB section}
\noindent In the sequel, we work in a Markovian setting, then making the following assumption.
\begin{assumption}\label{ass:markovian}
    The premium rates for insurance and reinsurance are given by $c_t=c(t,\lambda_{t^-})$ and $q^u_t=q(t,\lambda_{t^-},u_t)$ where  $c: [0,T] \times (0,+\infty) \to (0,+\infty)$ and $q: [0,T] \times (0,+\infty) \times U \to [0,+\infty)$ are measurable functions.  
\end{assumption}

\begin{remark}
    Assumption \ref{ass:markovian} is satisfied for the premium principles  described in Example \ref{ex_EVP}.
\end{remark}
\noindent Then, the wealth $X^u$ of the insurer associated with the reinsurance strategy $u$ evolves as
\begin{align}\label{SDE_X}
	\ud X_t^u 
	&=\left[c(t,\lambda_t) - q(t,\lambda_t,u_t)+rX_t^u\right]\ud t- \!\!\int_0^{+\infty}\!\!\! \! \Phi(z,u_t)m^{(1)}(\ud t,\ud z),\  X_0^u=R_0 \in \R^+.
\end{align}

\noindent Note that under Assumption \ref{ass:markovian} the pair $(X^u, \Lambda)$, for any $u \in U$, is a Markov process. 
\begin{definition}
The set $\mathcal D$ denotes the class of functions $f \in C^1([0,T]\times \R \times (0,+\infty))$ such that for every constant $u \in U$ we have
\begin{equation}
 \begin{split}
 &\bE \left[ \int_0^T \int_0^{+ \infty} |f(s,X_s^u-\Phi(z,u),\lambda_s +\ell(z)) - f(s,X_s^u,\lambda_s)| \lambda_s  F^\a (\ud z)\ud s \right] < +\infty, \\
 &\bE \left[ \int_0^T \int_0^{+ \infty}  |f(s,X_s^u,\lambda_s + z)) - f(s,X_s^u,\lambda_s)|  \rho F^\b (\ud z)\ud s \right] < +\infty,
 \end{split}
 \end{equation}

  \begin{align}
   & \bE \left[ \int_0^T \lambda_s \Big{|}\frac{\partial f}{\partial \lambda} (s,X_s^u,\lambda_s)\Big{|} \ud s \right]  < +\infty, \quad \bE \left[ \int_0^T  \left|c(s,\lambda_s) - q(s,\lambda_s,u)+r X_s^u\right| \Big{|}\frac{\partial f}{\partial x} (s,X_s^u,\lambda_s)\Big{|} \ud s \right]  < +\infty.
  \end{align}
\end{definition}
\begin{lemma}
The Markov generator of the  
the pair $(X^u, \Lambda)$ for all constant controls $u \in U$ is given by
\begin{align}
&{\Ll}^{X,\lambda,u} f(t, x, \lambda) = \frac{\partial f}{\partial t}(t, x, \lambda) +  \frac{\partial f}{\partial x}(t, x, \lambda) \left\{c(s,\lambda) - q(s, \lambda,u)+rx\right\}+\frac{\partial f}{\partial \lambda}(t, x, \lambda) \alpha (\beta -\lambda)\\
& \quad + \int_0^{+\infty}\left[f(t,x-\Phi(z,u), \lambda + \ell(z))-f(t,x,\lambda)\right] \lambda F^\a (\ud z)\\
& \quad + \int_0^{+ \infty} \left[ f(t,x,\lambda + z) - f(t,x,\lambda) \right]  \rho F^\b (\ud z).
\end{align}
The domain of the generator ${\Ll}^{X,\lambda,u}$ contains $\mathcal D$.
\end{lemma}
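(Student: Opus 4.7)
The plan is to derive the generator by applying the Itô formula for jump processes to $f(t, X_t^u, \lambda_t)$ for $f \in \mathcal{D}$ and a constant control $u \in U$, then identifying the drift part as $\mathcal{L}^{X,\lambda,u} f$ and checking that the remaining jump terms yield true martingales under the integrability conditions of $\mathcal{D}$.

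First I would note that the pair $(X^u, \Lambda)$ is a pure-jump process with absolutely continuous drift component: the continuous part of $X^u$ has drift $c(t,\lambda_t) - q(t,\lambda_t,u) + r X_t^u$ by \eqref{SDE_X}, while the continuous part of $\Lambda$ has drift $\alpha(\beta-\lambda_t)$ by \eqref{intensity_eq}. The jumps come from two sources. Since $N^{(1)}$ and $N^{(2)}$ have no common jump times, a jump of $m^{(1)}$ of size $z$ causes $(X^u,\Lambda)$ to jump by $(-\Phi(z,u),\ell(z))$, whereas a jump of $m^{(2)}$ of size $z$ causes $(X^u,\Lambda)$ to jump by $(0,z)$. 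Applying Itô's formula for semimartingales with jumps gives, for $f \in \mathcal{D}$,
\begin{align}
f(t, X_t^u, \lambda_t) - f(0, X_0^u, \lambda_0)
&= \int_0^t \!\!\Big\{\tfrac{\partial f}{\partial s} + \tfrac{\partial f}{\partial x}\bigl[c(s,\lambda_s) - q(s,\lambda_s,u) + r X_s^u\bigr] + \tfrac{\partial f}{\partial \lambda}\alpha(\beta-\lambda_s)\Big\}(s, X_s^u, \lambda_s)\,\ud s \\
&\quad + \int_0^t\!\!\!\int_0^{+\infty}\!\!\bigl[f(s, X_{s^-}^u - \Phi(z,u), \lambda_{s^-}+\ell(z)) - f(s,X_{s^-}^u,\lambda_{s^-})\bigr]\,m^{(1)}(\ud s,\ud z) \\
&\quad + \int_0^t\!\!\!\int_0^{+\infty}\!\!\bigl[f(s, X_{s^-}^u, \lambda_{s^-}+z) - f(s,X_{s^-}^u,\lambda_{s^-})\bigr]\,m^{(2)}(\ud s,\ud z),
\end{align}
where the first integral already collects all continuous-drift contributions (there is no Brownian part, so no second-order correction appears).

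Next I would compensate the two random-measure integrals using the $(\bF,\Pp)$-compensators $\nu^{(1)}(\ud s,\ud z) = \lambda_{s^-} F^{(1)}(\ud z)\,\ud s$ and $\nu^{(2)}(\ud s,\ud z) = \rho\, F^{(2)}(\ud z)\,\ud s$ given in Remark \ref{projection}. Adding and subtracting the corresponding predictable projections produces exactly the two jump-integral terms in the claimed expression for $\mathcal{L}^{X,\lambda,u}f$; combined with the absolutely continuous drift collected above, this identifies $\mathcal{L}^{X,\lambda,u}f(t,X_t^u,\lambda_t)\,\ud t$ as the finite-variation predictable part of $f(t,X_t^u,\lambda_t)$. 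By the standard characterization of the generator on the Markov process $(X^u,\Lambda)$, this establishes the stated formula for $\mathcal{L}^{X,\lambda,u}f$.

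Finally, to conclude that $\mathcal{D}$ lies in the domain I would verify that, for $f \in \mathcal{D}$, the compensated integrals against $m^{(1)} - \nu^{(1)}$ and $m^{(2)} - \nu^{(2)}$ define genuine $(\bF,\Pp)$-martingales. This is exactly the content of the second part of Remark \ref{projection} applied to the predictable random fields
\[
H^{(1)}(s,z) = f(s, X_{s^-}^u - \Phi(z,u), \lambda_{s^-}+\ell(z)) - f(s,X_{s^-}^u,\lambda_{s^-}),\quad H^{(2)}(s,z) = f(s, X_{s^-}^u, \lambda_{s^-}+z) - f(s,X_{s^-}^u,\lambda_{s^-}),
\]
whose absolute integrability against $\nu^{(1)}$ and $\nu^{(2)}$ is exactly the first two conditions defining $\mathcal{D}$. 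Likewise, the last two integrability conditions in $\mathcal{D}$ ensure that the Lebesgue integrals containing $\partial_\lambda f$ and $\partial_x f$ are almost surely finite, so that Dynkin's formula $\mathbb{E}[f(t,X_t^u,\lambda_t)] - f(0,X_0^u,\lambda_0) = \mathbb{E}[\int_0^t \mathcal{L}^{X,\lambda,u}f(s,X_s^u,\lambda_s)\,\ud s]$ holds and $f$ indeed belongs to the domain of $\mathcal{L}^{X,\lambda,u}$. The only mild subtlety is bookkeeping: one must use that $N^{(1)}$ and $N^{(2)}$ are disjoint in jumps, which is why the $m^{(1)}$-integrand uses the jump $-\Phi(z,u)$ in $X$ and $\ell(z)$ in $\lambda$ simultaneously, while the $m^{(2)}$-integrand leaves $X$ untouched; beyond that, the argument is a direct Itô-plus-compensation computation.
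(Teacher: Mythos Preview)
Your proposal is correct and follows exactly the approach indicated in the paper, which simply states that the result is a direct application of It\^o's formula given the dynamics \eqref{intensity_eq} and \eqref{SDE_X}. You have merely written out in full the compensation and martingale-verification steps that the paper leaves implicit.
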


\begin{proof} Recalling \eqref{intensity_eq} and \eqref{SDE_X}, the proof is a direct  application of It\^o's formula. 
\end{proof}

 Let us introduce the value function $v:[0,T] \times \R \times (0,+\infty) \to [0,+\infty)$ corresponding to the optimization problem \eqref{eqn:minpb}
\begin{equation}
\label{VF}
v(t,x,\lambda) = \inf_{u\in\mathcal{U}_t}\mathbb{E}_{t,x,\lambda}\bigl[ e^{-\eta X^u_T} \bigr], \quad (t,x,\lambda) \in [0,T] \times \R \times (0,+\infty),
\end{equation}
where the notation $\mathbb{E}_{t,x,\lambda}[\cdot]$ stands for the expectation when $(X^u, \Lambda)$ starts from $(x,\lambda)$ at time $t$. If the value function $v(t,x,\lambda)$ is sufficiently smooth, it is expected to solve the Hamilton-Jacobi-Bellman (HJB) equation
\begin{equation}
\left\{
\begin{array}{ll}
\inf_{u \in U} {\Ll}^{X,\lambda,u} v(t,x, \lambda) =0, &\quad \forall (t,x,\lambda) \in (0,T) \times \R \times (0,+\infty), \label{HJB1}\\
v(T,x, \lambda) = e ^{-\eta x}, &\quad \forall (x,\lambda) \in \R \times (0,+\infty).
\end{array}
\right.
\end{equation}

\noindent Notice that the explicit solution to SDE \eqref{SDE_X} with initial data $(t,x)$, is given by
\begin{equation}
\label{eqn:X_t_T}
X^u_T = xe^{r(T-t)} + \int_t^T e^{r(T-s)} \left( c(s,\lambda_s)-q(s, \lambda_s,u_s) \right) \,\ud s
-\int_t^T\int_0^{+\infty} e^{r(T-s)} \Phi(z, u_s)  \,m^{(1)}(\ud s,\ud z),
\end{equation}
so we can write \eqref{VF} as follows
\begin{equation}\label{eq:VF2}
    v(t,x,\lambda)=e^{-\eta x e^{r(T-t)}}\varphi(t,\lambda),
\end{equation}
where we have set for any $(t, \lambda) \in [0,T] \times (0,+\infty)$
\begin{equation} \label{varphi}
\varphi(t,\lambda)=\inf_{u\in\mathcal{U}_t}\mathbb{E}_{t,\lambda}\left[e^{-\eta\int_t^T e^{r(T-s)} \left( c(s,\lambda_s)-q(s,\lambda_s,u_s) \right) \,\ud s
+\eta\int_t^T\int_0^{+\infty} e^{r(T-s)} \Phi(z, u_s)  \,m^{(1)}(\ud s,\ud z)}\right],
\end{equation}
and the notation $\mathbb{E}_{t,\lambda}[\cdot]$ stands for the expectation computed when the intensity process $\Lambda$ starts from $\lambda$ at time $t$.

\begin{remark} \label{positive}
    It is easy to prove that $\varphi(t,\lambda)$ is a strictly positive  function. Indeed, for any $u\in\mathcal{U}_t$ we have
$$\mathbb{E}_{t,\lambda}\left[e^{-\eta\int_t^T e^{r(T-s)} \left( c(s,\lambda_s)-q(s,\lambda_s,u_s) \right) \,\ud s
+\eta\int_t^T\int_0^{+\infty} e^{r(T-s)} \Phi(z, u_s)  \,m^{(1)}(\ud s,\ud z)}\right]
\geq \mathbb{E}_{t,\lambda}\left[e^{-\eta\int_t^T e^{r(T-s)}  c(s, \lambda_s)\,\ud s} \right ].$$
Thus, for any $(t, \lambda) \in [0,T] \times (0,+\infty)$, we get $\varphi(t,\lambda) \geq  \mathbb{E}_{t,\lambda}\left[e^{-\eta\int_t^T e^{r(T-s)}  c(s,\lambda_s)\,\ud s} \right]>0$. 
\end{remark}
\noindent Now, we compute all partial derivatives of $v(t,x,\lambda)$:
\begin{align}
\frac{\partial v}{\partial t}(t,x,\lambda) & = e^{-\eta x e^{r(T-t)}}\left(\frac{\partial \varphi}{\partial t}(t,\lambda) + r \eta x e^{r(T-t)}\varphi(t,\lambda)\right),	\\
\frac{\partial v}{\partial x}(t,x,\lambda) & = -\eta e^{r(T-t)} e^{-\eta x e^{r(T-t)}} \varphi(t, \lambda),\\
\frac{\partial v}{\partial \lambda}(t,x,\lambda) & =  e^{-\eta x e^{r(T-t)}} \frac{\partial \varphi}{\partial \lambda}(t, \lambda).
\end{align}
Then, 
\begin{align}
& {\Ll}^{X,\lambda,u} v(t, x, \lambda) \\
& =  e^{-\eta x e^{r(T-t)}}\bigg\{\left(\frac{\partial \varphi}{\partial t}(t,\lambda) + r \eta x e^{r(T-t) }\varphi(t,\lambda)\right)-\eta e^{r(T-t)}\varphi(t, \lambda)\left(c(t,\lambda)-q(t,\lambda,u)+rx\right)\\
& \  + \frac{\partial \varphi}{\partial \lambda}(t, \lambda)\alpha (\beta -\lambda)\bigg\}+ \int_0^{+\infty}e^{-\eta x e^{r(T-t)}}\left[e^{\eta \Phi(z, u)  e^{r(T-t)}}\varphi(t, \lambda + \ell(z))-\varphi(t,\lambda)\right] \lambda F^\a (\ud z)\\
& \  + \int_0^{+ \infty} e^{-\eta x e^{r(T-t)}}\left[ \varphi(t,\lambda + z) - \varphi(t,\lambda) \right]  \rho F^\b (\ud z)\\
& = e^{-\eta x e^{r(T-t)}}\bigg\{\frac{\partial \varphi}{\partial t}(t,\lambda) - \eta e^{r(T-t)}\varphi(t,\lambda)\left(c(t,\lambda) -q(t,\lambda,u)\right) + \frac{\partial \varphi}{\partial \lambda}(t, \lambda)\alpha (\beta -\lambda)\\
& \ + \int_0^{+\infty}\left[e^{\eta \Phi(z, u)  e^{r(T-t)}}\varphi(t, \lambda + \ell(z))-\varphi(t,\lambda)\right] \lambda F^\a (\ud z)+ \int_0^{+ \infty} \left[ \varphi(t,\lambda + z) - \varphi(t,\lambda) \right]  \rho F^\b (\ud z)\bigg\},
\end{align}
and the HJB-equation \eqref{HJB1} reduces to
\begin{equation} \label{eq:HJB2}
\begin{split}
\frac{\partial \varphi}{\partial t}(t,\lambda) & + \alpha(\beta -\lambda)	\frac{\partial \varphi}{\partial \lambda}(t,\lambda) + \int_0^{+ \infty} \left[ \varphi(t,\lambda + z) - \varphi(t,\lambda) \right]  \rho F^\b (\ud z)\\
& - \eta e^{r(T-t)} \varphi(t,\lambda) c(t,\lambda) +\inf_{u \in U} \Psi^u(t,\lambda)=0,
\end{split}
\end{equation}
with final condition
\begin{equation} \label{eq:final_cond}
\varphi(T,\lambda)=1, \quad \lambda \in (0,+\infty),
\end{equation}
where the function $\Psi^u(t,\lambda)$ is given by
\begin{equation}\label{Psiu}
	\Psi^u(t,\lambda)  = \eta e^{r(T-t)} \varphi(t,\lambda) q(t, \lambda,u)
	+\int_0^{+\infty}\left[e^{\eta \Phi(z, u) e^{r(T-t)}}\varphi(t,\lambda + \ell(z))-\varphi(t,\lambda)\right]  \lambda F^\a (\ud z).
\end{equation}

\noindent We now provide a verification result in terms of the reduced HJB-equation \eqref{eq:HJB2}, whose proof is postponed to Appendix \ref{appendix:proofs}.
\begin{theorem}[Verification Theorem]\label{Verifica}
\hskip 1mm

(i) Let $\widetilde \varphi (t,\lambda)\in C^1((0,T) \times (0,+\infty)) \cap C([0,T] \times(0,+\infty))$ be a classical solution of the HJB-equation \eqref{eq:HJB2} that meets the final condition \eqref{eq:final_cond}. 

(ii) Let $\widetilde v(t,x,\lambda) = e^{-\eta x e^{r(T-t)}} \widetilde\varphi(t,\lambda)$ and assume that for any $u \in\mathcal U$ the family $\{\widetilde v(\tau, X^u_{\tau},\lambda_\tau);\ \tau \  \mbox{stopping time}, \ \tau\leq T \}$ is uniformly integrable. \\
\noindent Then, the function $\widetilde v(t,x,\lambda)$ coincides with the value function $v(t,x,\lambda)$ given in \eqref{VF}. Furthermore, let $u^*(t,\lambda)$ be a minimizer of $\inf_{u \in U} \Psi^u(t,\lambda)$. Then, $\{u_t^*=u^*(t,\lambda_{t^{-}}),\ t \in [0,T]\} \in \mathcal U$ is an optimal strategy. 
\end{theorem}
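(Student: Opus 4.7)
The plan is to apply It\^o's formula to $\widetilde v(s, X^u_s, \lambda_s)$ for an arbitrary $u \in \mathcal U_t$, use the HJB equation to turn the resulting semimartingale decomposition into a submartingale bound, and invoke hypothesis (ii) to remove the localization. Equality will hold throughout for the feedback strategy $u^*_r = u^*(r,\lambda_{r^-})$, giving both $\widetilde v \le v$ and $\widetilde v \ge v$, hence the claim.

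\textbf{Step 1 (It\^o decomposition and HJB inequality).} Since both $X^u$ and $\Lambda$ are finite variation pure-jump processes with absolutely continuous drift, the regularity $\widetilde \varphi \in C^1((0,T) \times (0,+\infty)) \cap C([0,T] \times (0,+\infty))$ is enough to apply It\^o's formula to $\widetilde v(s, X^u_s, \lambda_s)$. Choosing a localizing sequence $\{\tau_n\}_{n \ge 1}$ of $\bF$-stopping times with $\tau_n \uparrow T$, one obtains
\begin{align}
\widetilde v(s\wedge \tau_n, X^u_{s\wedge \tau_n}, \lambda_{s\wedge \tau_n}) = \widetilde v(t,x,\lambda) + \int_t^{s\wedge \tau_n}\!\!\! {\Ll}^{X,\lambda,u_r}\widetilde v(r, X^u_{r^-}, \lambda_{r^-})\,\ud r + M^u_{s\wedge \tau_n},
\end{align}
where $M^u$ is a local martingale built from stochastic integrals against the compensated measures $m^\a - \nu^\a$ and $m^\b - \nu^\b$ (cf. Remark \ref{projection}). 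The computation displayed in the preamble shows that the reduced HJB equation \eqref{eq:HJB2} for $\widetilde \varphi$ is equivalent to $\inf_{u \in U} {\Ll}^{X,\lambda,u}\widetilde v(r,y,\mu) = 0$ at every $(r,y,\mu)$. Consequently ${\Ll}^{X,\lambda,u}\widetilde v \ge 0$ for every $u \in U$, with equality attained at any pointwise minimizer $u^*(r,\lambda)$ of $\Psi^u(r,\lambda)$; hence the stopped process above is a submartingale in general and a local martingale when $u = u^*$.

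\textbf{Step 2 (passing to the limit and optimality).} Optional sampling gives $\widetilde v(t,x,\lambda) \le \mathbb E_{t,x,\lambda}\bigl[\widetilde v(T \wedge \tau_n, X^u_{T \wedge \tau_n}, \lambda_{T \wedge \tau_n})\bigr]$. By continuity of $\widetilde v$ on $[0,T] \times \R \times (0,+\infty)$ and the terminal condition $\widetilde \varphi(T,\cdot)=1$, the integrand converges a.s. to $e^{-\eta X^u_T}$, while hypothesis (ii) yields the corresponding convergence of expectations; taking the infimum over $u \in \mathcal U_t$ gives $\widetilde v(t,x,\lambda) \le v(t,x,\lambda)$. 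For the reverse inequality, continuity of $\Phi$, $q$ and $\widetilde \varphi$ makes $\Psi^u(r,\lambda)$ continuous in $u \in U$, so a standard measurable selection theorem provides a jointly measurable minimizer $u^*(r,\lambda)$; the feedback $u^*_r = u^*(r,\lambda_{r^-})$ is then $\bF$-predictable and, by Proposition \ref{ADM}(b), belongs to $\mathcal U_t$. Running the same argument with $u = u^*$ and equality in the HJB inequality turns the stopped process into a local martingale and, by hypothesis (ii), into a uniformly integrable martingale up to $T$, yielding $\widetilde v(t,x,\lambda) = \mathbb E_{t,x,\lambda}[e^{-\eta X^{u^*}_T}] \ge v(t,x,\lambda)$, which closes the argument.

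\textbf{Main obstacle.} The delicate point is precisely the localization-removal for $M^u$: its jump component is a stochastic integral against $m^\a - \nu^\a$ whose intensity $\lambda_{r^-}$ is unbounded, so an a priori $L^2$-bound on $M^u$ is not immediate. Hypothesis (ii) is the clean workaround: uniform integrability of the family $\{\widetilde v(\tau, X^u_\tau, \lambda_\tau) : \tau \le T\}$ is exactly what allows exchanging limit and expectation without separately estimating the martingale term. The measurable selection of $u^*(r,\lambda)$ is a secondary (and standard) issue, readily handled since $\Psi^u$ is continuous in $u$ over the compactified set $U$.
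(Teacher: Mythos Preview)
Your proposal is correct and follows essentially the same route as the paper: apply It\^o's formula to $\widetilde v(s,X^u_s,\lambda_s)$, use the HJB equation to obtain ${\Ll}^{X,\lambda,u}\widetilde v \ge 0$ (with equality at the pointwise minimizer), localize, pass to the limit via the uniform integrability hypothesis (ii), and invoke measurable selection together with Proposition \ref{ADM}(b) for the admissibility of $u^*$. The paper's version is slightly more explicit in that it specifies the localizing sequence $\tau_n = \inf\{t:|X^u_t|>n \text{ or } \lambda_t>n \text{ or } \lambda_t<1/n\}$ and checks directly that the stopped martingale is a true martingale via boundedness of $\widetilde v$ on compacts, but the argument is the same.
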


\noindent In the one-dimensional case the function $\Phi(z,u)$ is increasing in $u \in [u_M, u_N] \subset \overline{ \mathbb R}$ and under suitable assumptions we can obtain an explicit representation of the optimal reinsurance strategy.
\begin{proposition}[The optimal strategy]\label{strategyHJB}
   Under the assumptions of Theorem \ref{Verifica}, suppose that $\Phi (z, u)$ is differentiable in $u\in [u_M,u_N]$ for almost every $z \in (0, + \infty)$ and $\Psi^u(t,\lambda)$ given in \eqref{Psiu} is strictly convex in $u\in [u_M,u_N]$. Then, the optimal reinsurance strategy $u^*_t=\{ u^*(t,\lambda_{t^-}),\ t\in[0,T] \}$ is given by
\begin{equation}\label{u*_general_HJB}
u^*(t,\lambda_{t^-}) =
\begin{cases}
u_M		& (t,\lambda_{t^-}) \in A_0 \\
u_N			& (t,\lambda_{t^-})\in A_1\\
\bar{u}(t,\lambda_{t^-})	& \text{otherwise}, \\
\end{cases}
\end{equation}
where
\begin{align*}
A_0 &= \left\{ (t,\lambda) \in [0,T] \times (0, + \infty) :\ h(t, \lambda, u_M ) < 0 \right\}, \\
 A_1 &=  \left\{ (t,\lambda) \in [0,T] \times (0, + \infty) :\ h(t, \lambda, u_N ) > 0 \right\},
\end{align*}
\begin{equation}\label{eq:h}
h(t, \lambda,u) =  - \varphi(t,\lambda) \frac{\partial q}{\partial u}(t,\lambda,u) - \int_{0}^{\infty} \varphi(t, \lambda + \ell(z))  e^{\eta e^{r (T-t)}\Phi (z, u)}  \frac{\partial \Phi}{\partial u}(z,u) \lambda F^{(1)} (\ud z),
\end{equation}
the function $\varphi(t,\lambda)$ is given in \eqref{varphi} and $\bar{u}(t,\lambda) \in (u_M,u_N)$ solves the following equation with respect to $u$:
\begin{equation}\label{1st_order_genHJB}
- \varphi(t,\lambda) \frac{\partial q}{\partial u}(t, \lambda,u) = \int_{0}^{\infty} \varphi(t, \lambda + \ell(z))  e^{\eta e^{r (T-t)}\Phi (z, u)}  \frac{\partial \Phi}{\partial u}(z,u) \lambda F^{(1)} (\ud z).
\end{equation}

\end{proposition}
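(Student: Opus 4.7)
The approach is to identify the pointwise minimizer in $u$ of the function $\Psi^u(t,\lambda)$ defined in \eqref{Psiu} by elementary convex-optimization arguments, and then invoke the Verification Theorem \ref{Verifica}, which asserts that any measurable pointwise minimizer $u^*(t,\lambda)$ yields an optimal Markov strategy $u^*_t = u^*(t,\lambda_{t^-})$. The strict convexity hypothesis on $\Psi^u$ guarantees uniqueness of this minimizer on the compact interval $[u_M,u_N]\subseteq \overline{\R}$.

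First, I would differentiate $\Psi^u(t,\lambda)$ with respect to $u$, passing the derivative under the integral sign. This is legitimate by the continuity of $\partial q/\partial u$ and $\partial \Phi/\partial u$ in $u$, combined with the integrability provided by Assumption \ref{ass_app_premium} and Proposition \ref{mom}. A direct computation yields
\begin{equation}
\frac{\partial \Psi^u}{\partial u}(t,\lambda)\;=\;-\eta\,e^{r(T-t)}\,h(t,\lambda,u),
\end{equation}
with $h$ as in \eqref{eq:h}. Since $\eta\,e^{r(T-t)}>0$, the assumed strict convexity of $\Psi^u$ in $u$ is equivalent to $h(t,\lambda,\cdot)$ being strictly decreasing on $[u_M,u_N]$.

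With this monotonicity in hand, the characterization of $u^*(t,\lambda)$ follows by inspecting the sign of $h$ at the endpoints. If $(t,\lambda)\in A_0$, i.e.\ $h(t,\lambda,u_M)<0$, strict monotonicity forces $h(t,\lambda,u)<0$ throughout $[u_M,u_N]$, so $\partial\Psi^u/\partial u>0$ there and the unique minimizer is the left endpoint $u_M$. Symmetrically, $(t,\lambda)\in A_1$ yields the minimizer $u_N$. In the remaining case $h(t,\lambda,u_M)\ge 0\ge h(t,\lambda,u_N)$, continuity of $h$ in $u$ (again via dominated convergence) and strict monotonicity provide a unique $\bar u(t,\lambda)\in(u_M,u_N)$ solving $h(t,\lambda,\bar u(t,\lambda))=0$, which is exactly equation \eqref{1st_order_genHJB}.

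The main obstacle is the measurability of the resulting Markov map $(t,\lambda)\mapsto u^*(t,\lambda)$, which is needed to turn the pointwise minimizer into an admissible control in the sense of Definition \ref{def:U}. The sets $A_0$ and $A_1$ are measurable as preimages of open sets under the jointly continuous maps $(t,\lambda)\mapsto h(t,\lambda,u_M)$ and $(t,\lambda)\mapsto h(t,\lambda,u_N)$; measurability of $\bar u(t,\lambda)$ on the complement follows from the implicit function theorem applied to $h(t,\lambda,\bar u)=0$, or more concretely from the representation $\bar u(t,\lambda)=\inf\{u\in[u_M,u_N]:h(t,\lambda,u)\le 0\}$, which is measurable by the monotonicity of $h$ in $u$. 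Predictability of $u^*_t=u^*(t,\lambda_{t^-})$ then follows from the left-continuity of $\Lambda$, and admissibility $u^*\in\mathcal U$ is automatic by Proposition \ref{ADM}(b). An application of Theorem \ref{Verifica} concludes the proof.
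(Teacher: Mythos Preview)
Your proof is correct and follows essentially the same route as the paper's: compute $\partial\Psi^u/\partial u=-\eta e^{r(T-t)}h(t,\lambda,u)$, use strict convexity of $\Psi^u$ (equivalently, strict monotonicity of $h$) to split into the three endpoint/interior cases, and invoke Theorem~\ref{Verifica}. The only difference is that you spell out the measurability and admissibility of the resulting Markov map $u^*(t,\lambda)$ in more detail than the paper, which simply defers to the Verification Theorem and Proposition~\ref{ADM}(b).
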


\begin{proof}
By the Verification Theorem (see Theorem \ref{Verifica})  we have that $\varphi(t, \lambda)$ given in \eqref{varphi} solves the reduced HJB-equation \eqref{eq:HJB2} with final condition \eqref{eq:final_cond}
and in order to obtain an optimal strategy we have to find a minimizer of $\inf_{u \in U} \Psi^u(t,\lambda)$. Since $\Psi^u(t,\lambda)$ given in \eqref{Psiu} is continuous and strictly convex in $u\in [u_M,u_N]$, we see that there exists a unique minimizer. We write down the first order condition 
$$\frac {\partial \Psi^u}{\partial u}(t,\lambda)=- \eta e^{r(T-t)} h(t,\lambda, u)=0,
$$ 
where the function $h(t,\lambda, u)$ is defined in \eqref{eq:h}.
Recalling that $\ds \frac {\partial \Psi^u}{\partial u}(t,\lambda)$ is an increasing function in $u \in [u_M, u_N]$ we have three possible cases:

\noindent (i) for $(t,\lambda) \in A_0$, $\ds \frac {\partial \Psi^u}{\partial u}(t,\lambda) > 0 $, so $\Psi^u(t, \lambda)$ is increasing in $u\in  [u_M, u_N]$, hence $u^*(t, \lambda) = u_M$;

\noindent (ii) for $(t,\lambda) \in A_1$, $\ds \frac {\partial \Psi^u}{\partial u}(t,\lambda) < 0 $, so $\Psi^u(t, \lambda)$ is decreasing in $u \in  [u_M, u_N]$, hence $u^*(t, \lambda) = u_N$;

\noindent (iii) otherwise there exists
$\bar{u}(t,\lambda) \in (u_M,u_N)$ such that $\ds \frac{\partial \Psi^u}{\partial u}(t,\lambda)\Big{|}_{u=\bar u}=0$ and
$u^*(t, \lambda) = \bar u(t, \lambda)$. 
Finally, from Theorem \ref{Verifica} the stochastic process $\{u_t^*=u^*(t,\lambda_{t^{-}}),\ t \in [0,T]\}$ provides an optimal strategy. 
\end{proof}

    \begin{remark}
        If $q(t,\lambda,u)$ and $\Phi(z,u)$ are linear or convex in $u\in[u_M,u_N]$, then $\Psi^u(t,\lambda)$ is strictly convex in $u\in[u_M,u_N]$, and Proposition \ref{strategyHJB} applies.
    \end{remark}
The method based on the Verification Theorem in the contagion model outlined in Section \ref{sec:model} is challenging due to the inherent complexity in establishing the existence of a classical solution to Equation \eqref{eq:HJB2}. 
This difficulty arises from the nature of \eqref{eq:HJB2} as a Partial Integro-Differential Equation (PIDE) coupled with an optimization component embedded within the associated integro-differential operator.
For this reason we develop in Section \ref{sec:bsde} an alternative approach based on BSDEs. This method enables us  to obtain the result given in Proposition \ref{strategyHJB} without imposing any requirements for regularity on the value function $\varphi(t, \lambda)$, see Proposition \ref{prop:optreins_general} below.\\
Now, we discuss the special case of  the Cox process with shot noise intensity, which does not exhibit the self-exciting effect.

\subsection{Cox process with shot noise intensity}

Notice that the case of the Cox process with shot noise intensity corresponds to $\ell(z)=0$ in \eqref{intensity} and the reduced HJB-equation \eqref {eq:HJB2} reads as 
\begin{equation} \label{eq:HJB4}
\begin{split}
& \frac{\partial \varphi}{\partial t}(t,\lambda) + \alpha(\beta -\lambda)	\frac{\partial \varphi}{\partial \lambda}(t,\lambda) + \int_0^{+ \infty} \left[ \varphi(t,\lambda + z) - \varphi(t,\lambda) \right]  \rho F^\b (\ud z) - \eta e^{r(T-t)} \varphi(t,\lambda) c(t,\lambda)\\
& \quad + \varphi(t,\lambda) \inf_{u \in U} \Big\{ \eta e^{r(T-t)} q(t,\lambda,u) + \int_0^{+\infty}\left( e^{\eta \Phi(z, u) e^{r(T-t)}}- 1\right)  \lambda F^\a (\ud z)\Big \}=0,
\end{split}
\end{equation}
with final condition
\begin{equation} \label{eq:final_cond_cox}
\varphi(T,\lambda)=1, \quad \lambda \in (0,+\infty).
\end{equation}
By continuity of $q(\lambda,u)$ with respect  to $u \in U$ and since $U$ is compact, as in the general case, there exists $u^*(t,\lambda)$ which realizes the infimum in \eqref{eq:HJB4}. Denoting by 
$$H^*(t,\lambda) = \inf_{u \in U} \Big\{ \eta e^{r(T-t)} q(t,\lambda,u) + \int_0^{+\infty}\left( e^{\eta \Phi(z, u) e^{r(T-t)}}- 1\right)  \lambda F^\a (\ud z)\Big \},$$
equation \eqref{eq:HJB4} reads as 
\begin{equation} \label{eq:HJB3}
\begin{split}
\frac{\partial \varphi}{\partial t}(t,\lambda) & + \alpha(\beta -\lambda)	\frac{\partial \varphi}{\partial \lambda}(t,\lambda) + \int_0^{+ \infty} \left[ \varphi(t,\lambda + z) - \varphi(t,\lambda) \right]  \rho F^\b (\ud z)\\
& - \varphi(t,\lambda) \left[ \eta e^{r(T-t)} c(t,\lambda) - H^*(t,\lambda) \right]=0.
\end{split}
\end{equation}
If $\varphi(t,\lambda)$ is a classical solution to  \eqref{eq:HJB3} we can apply the Feymnan-Kac formula to get the probabilistic representation 
\begin{equation} \label{F-K}
\varphi(t,\lambda)=\mathbb E_{t,\lambda}\left[e^{-\int_t^T\left(\eta e^{r(T-t)}c(s,\lambda_s) - H^*(s,\lambda_s)\right)\ud s}\right],
\end{equation}
where the process $\Lambda=\{\lambda_t,\ t \in [0,T]\}$ solves \eqref{intensity_eq} with $\ell(z)=0$.
Note that, due to the fact that the claim intensity $\Lambda$ is an unbounded process, there are no results that we can apply directly to prove existence of classical solutions to \eqref{eq:HJB3} or equivalently to prove that the function $\varphi(t,\lambda)$ given in \eqref{F-K} is sufficiently regular. 

We remark that, 
under the assumptions of Proposition \ref{strategyHJB} we get the regions $A_0$, $A_1$ and the optimal strategy do not depend explicitly on the function  $\varphi(t,\lambda)$.  Precisely, \eqref{1st_order_genHJB}
reads as
\begin{equation}\label{Cox_HJB}
- \frac{\partial q}{\partial u}(t,\lambda,u) = \int_{0}^{\infty}  e^{\eta e^{r (T-t)}\Phi (z, u)}  \frac{\partial \Phi}{\partial u}(z,u) \lambda F^{(1)} (\ud z).
\end{equation}


\section{A BSDE approach} \label{sec:bsde}

\noindent In this section we follow an alternative method based on backward stochastic differential equations (BSDEs). From now on we assume that 
\begin{equation}
 \bF = \bF^{m^\a}   \vee \bF^{m^\b}, \quad \F = \F_T,
\end{equation}
where $\bF^{m^{(i)}} =\{ \F^{m^{(i)}}_t,\ t\in [0,T]\}$ denotes the natural filtration generated by the integer-valued random measures $m^{(i)}(\ud t, \ud z)$, $i=1,2$ given in \eqref{eqn:m1m2}, that is
for any $t \in [0,T]$
$$\F^{m^{(i)}}_t = \sigma\{(T^{(i)}_n, Z^{(i)}_n) : \  t \leq T^{(i)}_n \}.$$
Let us introduce the Snell envelope associated to the stochastic control problem in \eqref{eqn:minpb}, defined for any $u\in\mathcal{U}$ as
\begin{equation}
\label{eqn:W^u}
W^u_t = \essinf_{\bar{u}\in\mathcal{U}(t,u)}
{\mathbb{E}\biggl[e^{-\eta X^{\bar{u}}_T}\Big{|} \mathcal{F}_t\biggr]},\quad  \forall t \in [0,T],
\end{equation}
where $\mathcal{U}(t,u)$ denotes the restricted class of controls almost surely equal to $u$ over $[0,t]$, i.e.
\begin{equation*}
\mathcal U(t,u):=\Big\{ \bar u \in \mathcal U: \bar{u}_s = u_s \ \Pp-\text{a.s.} \ \text{for all} \ s\le t \le T  \Big\}.
\end{equation*}
Denote by $\bar{X}^u_t=e^{-rt}X^u_t$, with $t \in [0,T]$, the discounted wealth:
\begin{equation}
\label{eqn:X_disc}
\bar X^u_t = R_0 + \int_0^t e^{-rs}\left( c_s-q^u_s \right) \,\ud s
-\int_0^t\int_0^{+\infty} e^{-rs} \Phi(z, u_s) \,m^\a(\ud s,\ud z),
\end{equation}
and introduce the value process $V=\{V_t,\ t \in [0,T]\}$ associated to the problem in \eqref{eqn:minpb} as follows,
\begin{equation}
\label{eqn:V}
V_t = \essinf_{\bar{u}\in\mathcal{U}_t}\condespf{e^{-\eta e^{rT}(\bar{X}^{\bar{u}}_T-\bar{X}^{\bar{u}}_t)}},\quad  \forall t \in [0,T];
\end{equation}
then, we can show that, for every $u \in \mathcal{U}$
\begin{equation}\label{eqn:W^u_and_V}
W^u_t = e^{-\eta \bar{X}^u_t e^{rT}} V_t ,
\end{equation}
and, in turn, choosing null reinsurance, i.e. $u_t=u_N$, for any $t \in [0,T]$, we get
\begin{equation}
\label{eqn:VisWI}
V_t = e^{\eta \bar{X}^N_t e^{rT}} W^N_t, 
\end{equation}
where $\bar{X}^N=\{\bar{X}^N_t,\ t \in [0,T] \}$ and $W^N=\{W^N_t,\ t \in [0,T] \}$ denote the discounted wealth and the Snell envelope associated to null reinsurance, given in equations \eqref{eqn:X_disc} and \eqref{eqn:W^u}, respectively.
Under Assumption \ref{ass:markovian}, that is, premiums have a Markovian structure,   we get  for all $t \in [0,T]$
\begin{equation}
\label{nuova}
V_t = \varphi(t, \lambda_t),
\end{equation}
where $\varphi(t, \lambda)$ is given in \eqref{varphi}.
Our aim is to develop a BSDE characterization for the process $W^N$ which also provides a complete description of the value process $V$ in \eqref{eqn:V}.
The following sets of stochastic processes will play a key role for our BSDE characterization and its solution.
\begin{definition}
We define the following classes of stochastic processes:
\begin{itemize}
\item $\mathcal{L}^2$ denotes the space of c\`adl\`ag $\mathbb{F}$-adapted processes $Y=\{Y_t, \ t \in [0,T]\}$ such that:
\[
\esp{\int_0^T |Y_t|^2 \ud t} <+\infty.
\]
\item $\widehat{\mathcal{L}}^\a$ ($\widehat{\mathcal{L}}^\b$) denotes the space of $[0,+\infty)$-indexed $\mathbb{F}$-predictable random fields $\Theta=\{ \Theta_t(z),\ t \in [0,T],\ z \in [0, + \infty)\}$ such that:
\begin{align}
&\esp{\int_0^T\int_0^{+\infty} \Theta_t^2(z)  \lambda_{t^-} F^\a(\ud z)  \,\ud t } <+\infty,\\
& \left(\mathbb{E} \left[ \int_0^T\int_0^{+\infty} \Theta_t^2(z)  \rho F^\b(\ud z)  \,\ud t \right] <+\infty,\quad \text{respectively} \right).
\end{align}
\end{itemize}
\end{definition}
 

\noindent We now introduce the two-dimensional step process $Z=(C^\a, C^\b)$ where
\begin{equation}
C^\a_t = C_t =\sum_{n=1}^{N^\a_t} Z^\a_n, \quad C^\b_t=\sum_{n=1}^{N^\b_t} Z^\b_n. 
\end{equation}
Let $m(\ud t, \ud z_1, \ud z_2)$ be the integer-valued measure associated to $Z=(C^\a, C^\b)$. 
Recalling that $C^\a$ and $C^\b$ have not common jump times, the following equality holds
\begin{equation}\label{m}
m(\ud t, \ud z_1, \ud z_2) = m^{\a}(\ud t, \ud z_1) \delta_{0}(\ud z_2) + m^{\b}(\ud t, \ud z_2) \delta_{0}(\ud z_1). \end{equation}
Moreover, by \citet[Theorem 4.1]{bandini2024compensator} the $\bF$-dual predictable projection of $m(\ud t, \ud z_1, \ud z_2)$ is given by
\begin{align}
\nu(\ud t, \ud z_1, \ud z_2) = & \nu^{\a}(\ud t, \ud z_1) \delta_{0}(\ud z_2) + \nu^{\b}(\ud t, \ud z_2) \delta_{0}(\ud z_1) \\ 
= & \lambda_{t^-}F^\a(\ud z_1) \delta_{0}(\ud z_2) \ud t+ \rho F^\b(\ud z_2) \delta_{0}(\ud z_1) \ud t, \label{nu}
\end{align}
and let $\tilde{m}(\ud t, \ud z_1, \ud z_2) = m(\ud t, \ud z_1, \ud z_2) - \nu(\ud t, \ud z_1, \ud z_2)$ be the compensated random measure.

\noindent Let us denote by $\widehat{\mathcal{L}}$ the space of $\mathbb{F}$-predictable random fields $\Theta=\{ \Theta_t(z_1,z_2),\ t \in [0,T],\ z_i \in [0, + \infty),\ i=1,2\}$ 
such that:
\[
\esp{\int_0^T\int_0^{+\infty}  \int_0^{+\infty}\Theta_t^2(z_1,z_2)  \nu(\ud t, \ud z_1, \ud z_2) } <+\infty.
\]
Clearly,  $\Theta=\{ \Theta_t(z_1,z_2),\ t \in [0,T],\ z_i \in (0, + \infty),\ i=1,2\} \in \widehat{\mathcal{L}}$ if and only if  $\{\Theta_t(z_1,0),\ t \in [0,T],\ z_1 \in [0, + \infty)\}  \in  \widehat{\mathcal{L}}^\a$ and  $\{\Theta_t(0,z_2),\ t \in [0,T],\ z_2 \in [0, + \infty)\}  \in  \widehat{\mathcal{L}}^\b$.\\

\noindent We are now in the position to provide an $\bF$-martingale representation theorem. 
\begin{proposition}\label{mg representation}
Any square-integrable $(\bF, \Pp)$-martingale $M=\{M_t, t \in [0,T]\} $ has the following representation
\begin{equation}
  M_t = M_0 + \int_0^t \int_{0}^{+\infty} \Gamma^\a_s (z) \widetilde m^\a(\ud s, \ud z) + 
  \int_0^t \int_{0}^{+\infty} \Gamma^\b_s (z) \widetilde m^\b(\ud s, \ud z),
\end{equation}
where $\Gamma^{(i)} \in \widehat{\mathcal{L}}^{(i)}$, $i=1,2$, $\widetilde m^\a(\ud t, \ud z)$ and $\widetilde m^\b(\ud t, \ud z)$ are the compensated integer-valued random measures defined as
\begin{equation}\label{mcomp}
\widetilde m^\a(\ud t, \ud z) =  m^\a(\ud t, \ud z) - \lambda_{t^-} F^\a(\ud z)  \,\ud t, \quad 
\widetilde m^\b(\ud t, \ud z) = m^\b(\ud t, \ud z) - \rho F^\b(\ud z)  \,\ud t. 
\end{equation}
\end{proposition}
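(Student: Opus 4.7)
My plan is to deduce the claimed two-integral representation from a standard martingale representation theorem applied to the single multivariate marked point process $Z=(C^\a,C^\b)$ that generates $\bF$, and then to split the resulting stochastic integral into the two stated pieces by exploiting the fact that $m(\ud t,\ud z_1,\ud z_2)$ is carried by the union of the hyperplanes $\{z_2=0\}$ and $\{z_1=0\}$, as recorded in \eqref{m}.

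Since $\bF=\bF^{m^\a}\vee\bF^{m^\b}$ coincides with the natural filtration of $Z$, and since $m$ admits the $(\bF,\Pp)$-predictable compensator $\nu$ displayed in \eqref{nu}, Jacod's martingale representation theorem for marked point processes on their own filtration (see, e.g., Jacod--Shiryaev or Br\'emaud) yields
\begin{equation*}
M_t=M_0+\int_0^t\int_0^{+\infty}\int_0^{+\infty}H_s(z_1,z_2)\,\widetilde m(\ud s,\ud z_1,\ud z_2),
\end{equation*}
for some $\bF$-predictable random field $H\in\widehat{\mathcal L}$, with $\widetilde m=m-\nu$. The It\^o isometry for $\widetilde m$ translates the square integrability of $M$ into the condition $H\in\widehat{\mathcal L}$.

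Invoking the decompositions $m(\ud t,\ud z_1,\ud z_2)=m^\a(\ud t,\ud z_1)\delta_0(\ud z_2)+m^\b(\ud t,\ud z_2)\delta_0(\ud z_1)$ and the matching expression for $\nu$, the above integral splits into the contribution from $\{z_2=0\}$, which reduces to integration against $\widetilde m^\a(\ud s,\ud z_1)$ with integrand $\Gamma^\a_s(z):=H_s(z,0)$, and the contribution from $\{z_1=0\}$, which reduces to integration against $\widetilde m^\b(\ud s,\ud z_2)$ with integrand $\Gamma^\b_s(z):=H_s(0,z)$, where $\widetilde m^{(i)}$ are the compensated measures in \eqref{mcomp}. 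The equivalence between $H\in\widehat{\mathcal L}$ and the pair $\Gamma^\a\in\widehat{\mathcal L}^\a$, $\Gamma^\b\in\widehat{\mathcal L}^\b$, explicitly pointed out in the text just above the statement, supplies the required integrability.

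The main subtlety is to justify the applicability of Jacod's theorem in our setting, where the $(\bF,\Pp)$-intensity $\Lambda$ of $N^\a$ is unbounded; the theorem is nevertheless available because it only requires $\nu$ to be $\bF$-predictable with $\int_0^t\int(1\wedge|z|)\nu(\ud s,\ud z)<+\infty$ a.s., which follows from Proposition \ref{mom} and Assumption \ref{nuova1}. Should one wish to sidestep the multivariate version altogether, an alternative route is to argue first under $\Q$: there, $N^\a$ and $N^\b$ are independent Poisson processes with deterministic intensities $1$ and $\rho$ and i.i.d.\ marks, for which the analogous representation is classical (for instance via the chaos decomposition). The representation can then be transferred to $\Pp$ through the positive density martingale $L$ in \eqref{eqn:L} by Girsanov's theorem for random measures, which only modifies the compensator of $m^\a$ from $F^\a(\ud z)\ud t$ to $\lambda_{t^-}F^\a(\ud z)\ud t$ while leaving the form of the integrands intact.
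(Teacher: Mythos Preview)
Your argument is correct and follows essentially the same route as the paper: apply a martingale representation theorem to the bivariate marked point process $Z$ generating $\bF$ (the paper invokes \citet[Theorem 3.1]{bandini2024compensator} rather than Jacod's classical result), then split the single integral against $\widetilde m$ into the two pieces via \eqref{m}--\eqref{nu}, and finally read off $\Gamma^{(i)}\in\widehat{\mathcal L}^{(i)}$ from $H\in\widehat{\mathcal L}$. Your additional remarks on unbounded intensity and the alternative $\Q$-route are not needed but do no harm.
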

  
\begin{proof}
    Since $\bF^{m} = \bF = \bF^{m^\a} \vee \bF^{m^\b}$
    we can apply \citet[Theorem 3.1]{bandini2024compensator}. Hence, any  $(\bF, \Pp)$-local martingale $M$ can be represented as 
    \begin{equation}
  M_t = M_0 + \int_0^t \int_{0}^{+\infty} \int_{0}^{+\infty}\Gamma_s (z_1, z_2) \widetilde m(\ud s, \ud z_1, \ud z_2), \quad t \in [0, T],
\end{equation}
with $\Gamma =\{ \Gamma_t(z_1,z_2),\ t \in [0,T],\ z_i \in [0, + \infty),\ i=1,2\}$  an $\bF$-predictable random field
such that:
\[
\int_0^T\int_0^{+\infty}  \int_0^{+\infty}|\Gamma_t(z_1,z_2)|  \nu(\ud t, \ud z_1, \ud z_2)  <+\infty, \quad \Pp-\mbox{a.s.}.
\]
If in addition $M$ is square integrable, $\Gamma \in \widehat{\mathcal{L}}$ because for any $t \in [0,T]$
$$\esp{M^2_t}=  \esp{\int_0^t\int_0^{+\infty}  \int_0^{+\infty}|\Gamma_t(z_1,z_2)|^2  \nu(\ud t, \ud z_1, \ud z_2) }.$$

\noindent Finally, the statement follows taking the structure of $\widetilde m(\ud t, \ud z_1, \ud z_2)$ into account, see \eqref{m} and \eqref{nu}.
\end{proof}

\noindent We give below a general verification result.  
\begin{proposition} \label{VT1}
Suppose there exists an $\mathbb{F}$-adapted process $D=\{D_t,\ t \in [0,T]\}$ such that:
\begin{itemize}
\item $\{ D_t e^{ - \eta \bar{X}^u_t e^{rT}},\ t \in [0, T] \}$ is an $(\bF,\Pp)$-sub-martingale for any $u \in \mathcal{U}$ and an $(\bF,\Pp)$-martingale for some $u^* \in \mathcal{U}$;
\item $D_T = 1$.
\end{itemize}
Then, $D_t = V_t$ $\Pp$-a.s. for each $t \in [0,T]$ and $u^* $ is an optimal control. Moreover, for any $u \in \mathcal{U}$, $W^u$ is an $(\bF,\Pp)$-sub-martingale and $W^{u^*}$ is an $(\bF,\Pp)$-martingale.
\end{proposition}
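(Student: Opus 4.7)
The plan is to reduce everything to showing $D_t = V_t$ using the key identity \eqref{eqn:W^u_and_V}, namely $W^u_t = e^{-\eta \bar X^u_t e^{rT}} V_t$, and then use the submartingale inequality (upper bound via $D \le V$) together with the martingale equality along $u^*$ (lower bound via $V \le D$). The last conclusion about $W^u$ being an $(\bF,\Pp)$-submartingale and $W^{u^*}$ an $(\bF,\Pp)$-martingale will then be immediate from \eqref{eqn:W^u_and_V} and the hypotheses on $D$.

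\textbf{Step 1 ($D_t \le V_t$).} Fix $t\in[0,T]$ and $u \in \mathcal U$. For any $\bar u \in \mathcal U(t,u)$ the discounted wealth satisfies $\bar X^{\bar u}_t = \bar X^u_t$ because the integrals in \eqref{eqn:X_disc} over $[0,t]$ depend only on the restriction of the strategy to $[0,t]$. Applying the $(\bF,\Pp)$-submartingale property of $\{D_s e^{-\eta \bar X^{\bar u}_s e^{rT}}\}_{s\in[0,T]}$ between $t$ and $T$ and using $D_T = 1$,
\begin{equation}
D_t\, e^{-\eta \bar X^u_t e^{rT}} = D_t\, e^{-\eta \bar X^{\bar u}_t e^{rT}} \le \condespf{ e^{-\eta \bar X^{\bar u}_T e^{rT}}} = \condespf{ e^{-\eta X^{\bar u}_T}}.
\end{equation}
Taking the essential infimum over $\bar u \in \mathcal U(t,u)$ and using \eqref{eqn:W^u_and_V},
\begin{equation}
D_t\, e^{-\eta \bar X^u_t e^{rT}} \le W^u_t = V_t\, e^{-\eta \bar X^u_t e^{rT}},
\end{equation}
so $D_t \le V_t$ $\Pp$-a.s.

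\textbf{Step 2 ($V_t \le D_t$ and optimality of $u^*$).} By the martingale property of $\{D_s e^{-\eta \bar X^{u^*}_s e^{rT}}\}$,
\begin{equation}
D_t\, e^{-\eta \bar X^{u^*}_t e^{rT}} = \condespf{ e^{-\eta X^{u^*}_T}} \ge W^{u^*}_t = V_t\, e^{-\eta \bar X^{u^*}_t e^{rT}},
\end{equation}
where the inequality uses that $u^* \in \mathcal U(t,u^*)$ in the definition of $W^{u^*}_t$. Dividing by the positive factor $e^{-\eta \bar X^{u^*}_t e^{rT}}$ yields $V_t \le D_t$. Combined with Step 1 this gives $D_t = V_t$ $\Pp$-a.s. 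Taking $t=0$, noting $\bar X^{u^*}_0 = R_0$, and passing to unconditional expectations gives $\esp{e^{-\eta X^{u^*}_T}} = D_0 e^{-\eta R_0 e^{rT}} \le \esp{e^{-\eta X^u_T}}$ for every $u \in \mathcal U$, proving optimality of $u^*$.

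\textbf{Step 3 (Conclusions on $W^u$).} Since $D_t = V_t$, by \eqref{eqn:W^u_and_V} we have $W^u_t = D_t e^{-\eta \bar X^u_t e^{rT}}$ for every $u \in \mathcal U$. The hypothesis that this process is an $(\bF,\Pp)$-submartingale gives the submartingale property of $W^u$; the same identity for $u=u^*$ combined with the martingale assumption yields that $W^{u^*}$ is an $(\bF,\Pp)$-martingale.

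The argument is essentially bookkeeping once one is careful about two small points: (i) that $\bar X^{\bar u}_t$ coincides with $\bar X^u_t$ for $\bar u \in \mathcal U(t,u)$, which lets us pull the $\F_t$-measurable factor $e^{-\eta \bar X^u_t e^{rT}}$ out of the conditional expectation and the essential infimum; and (ii) the translation between the two optimization classes $\mathcal U(t,u)$ and $\mathcal U_t$ encoded by \eqref{eqn:W^u_and_V}. No compactness, regularity, or integrability hurdle arises here because $V$, $W^u$ and $D$ are nonnegative and $D_T = 1$.
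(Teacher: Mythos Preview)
Your proof is correct and follows essentially the same approach as the paper: use the submartingale property together with $D_T=1$ to obtain $D_t\le V_t$, use the martingale property along $u^*$ for the reverse inequality, and then read off the (sub)martingale properties of $W^u$ from the identity \eqref{eqn:W^u_and_V}. The only cosmetic difference is that the paper works directly with the definition \eqref{eqn:V} of $V_t$ (dividing by $e^{-\eta \bar X^u_t e^{rT}}$ and taking the essential infimum over $\mathcal U_t$), whereas you route the inequality through $W^u_t$ and the class $\mathcal U(t,u)$ before invoking \eqref{eqn:W^u_and_V}; the two are equivalent.
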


\begin{proof}
In view of the terminal condition and the sub-martingale property, for every $t \in [0,T]$ we have
\begin{equation}
    \condespf{e^{ - \eta \bar{X}^u_T e^{rT}}} \ge D_t e^{ - \eta \bar{X}^u_t e^{rT}},\quad {\rm for\ each}\ u \in \mathcal U,
\end{equation}
so that
\begin{equation}
    D_t \leq  \condespf{e^{ - \eta e^{rT}(\bar{X}^u_T-\bar{X}^u_t) }},
\end{equation}
which implies $D_t \leq V_t$ $\Pp$-a.s. for each $t \in [0,T]$. On the other hand, for $u^* \in \mathcal U$, we have that
\begin{equation}
    D_t=\condespf{e^{ - \eta e^{rT}(\bar{X}^{u^*}_T-\bar{X}^{u^*}_t) }} \ge V_t,
\end{equation}
hence $D_t = V_t$ $\Pp$-a.s. for each $t \in [0,T]$. Finally, the last statement, known as the optimal Bellman principle, follows from \eqref{eqn:W^u_and_V}.
\end{proof}
\noindent The first main result is as follows.

\begin{theorem}\label{T1}
Let Assumption \ref{ass_app_premium} be in force. Let $(Y,\Theta^{Y,\a}, \Theta^{Y,\b}) \in \mathcal{L}^2 \times \widehat{\mathcal{L}}^\a \times \widehat{\mathcal{L}}^\b$ be a solution to the BSDE 
\begin{equation} \label{bsde}
\begin{split}
Y_t & = \xi - \int_t^T \int_0^{+ \infty} \Theta^{Y,\a}_s (z) \widetilde m^\a(\ud s, \ud z) - \int_t^T \int_0^{+ \infty} \Theta^{Y,\b}_s (z) \widetilde m^\b(\ud s, \ud z)\\
	& \qquad - \int_t^T \esssup_{u \in \mathcal U}\widetilde f ( s, Y_{s^-} , \Theta^{Y, \a}_s(\cdot),u_s)\,\ud s ,
 \end{split}
\end{equation}
with terminal condition $\xi = e^{- \eta X^N_T}$, where
\begin{multline}\label{ftilde}
\widetilde{f} ( t, Y_{t^-} , \Theta^{Y, \a}_t(\cdot ) , u_t ) = -Y_{t-}\eta e^{r(T-t)} q^{u}_t \\
	+ \int_0^{+\infty} [Y_{t-}+\Theta^{Y, \a}_t (z) ] \big[ 1- e^{-\eta e^{r(T-t)}(z-\Phi(z,u_t))}\big]
	\lambda_{t^-} F^{(1)} (\ud z).
\end{multline}

\noindent Then, $Y_t = W^N_t$ $\Pp$-a.s. and the process $u^* \in \mathcal{U}$ which satisfies
\begin{equation}\label{eqn:u*esssup}
\widetilde{f} ( t, Y_{t^-} , \Theta^{Y, \a}_t(\cdot ) , u^*_t )
	= \esssup_{u \in \mathcal{U}}  \widetilde{f} ( t, Y_{t^-} , \Theta^{Y, \a}_t(\cdot ) , u_t )
	\qquad \forall t\in[0,T]
\end{equation}
provides an optimal control.
\end{theorem}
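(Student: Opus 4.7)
The plan is to apply the verification Proposition \ref{VT1} to the candidate process
\begin{equation*}
D_t := e^{\eta \bar X^N_t e^{rT}} Y_t, \qquad t \in [0,T].
\end{equation*}
The terminal condition $Y_T = e^{-\eta X^N_T}$ together with $X^N_T = e^{rT}\bar X^N_T$ immediately gives $D_T = 1$. It then remains to show that, for every $u \in \mathcal U$, the process
\begin{equation*}
G^u_t := D_t\,e^{-\eta \bar X^u_t e^{rT}} = e^{\eta(\bar X^N_t - \bar X^u_t)e^{rT}}\,Y_t
\end{equation*}
is an $(\bF,\Pp)$-submartingale, and that $G^{u^*}$ is a true $(\bF,\Pp)$-martingale. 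Granted this, Proposition \ref{VT1} yields $D_t = V_t$ and the optimality of $u^*$, and the identity $Y_t = W^N_t$ follows from \eqref{eqn:W^u_and_V} specialized to $u = u_N$.

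The central computation is It\^o's formula applied to $G^u$. From \eqref{eqn:X_disc} the bounded variation part of $\bar X^N - \bar X^u$ is $\int_0^\cdot e^{-rs} q^u_s\,\ud s$, while its jumps are $-e^{-rs}(z-\Phi(z,u_s))$ driven solely by $m^\a$; hence the prefactor $e^{\eta(\bar X^N - \bar X^u)e^{rT}}$ and $Y$ share jumps exactly at the atoms of $m^\a$. Adding the drift of $e^{\eta(\bar X^N - \bar X^u)e^{rT}}$, the generator of $Y$ in \eqref{bsde}, and the predictable compensators of the $m^\a$-driven jump terms (including the cross-variation with $\Theta^{Y,\a}$), the explicit form of $\widetilde f$ in \eqref{ftilde} should make the bounded variation part of $G^u$ telescope into
\begin{equation*}
\int_0^t e^{\eta(\bar X^N_s - \bar X^u_s)e^{rT}}\Bigl[\esssup_{\bar u \in \mathcal U}\widetilde f\bigl(s,Y_{s^-},\Theta^{Y,\a}_s(\cdot),\bar u_s\bigr) - \widetilde f\bigl(s,Y_{s^-},\Theta^{Y,\a}_s(\cdot),u_s\bigr)\Bigr]\ud s,
\end{equation*}
which is non-negative for every $u \in \mathcal U$ and vanishes identically when $u = u^*$ satisfies \eqref{eqn:u*esssup}.

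The main technical obstacle is to upgrade the stochastic integrals appearing in $G^u$ against $\widetilde m^\a$ and $\widetilde m^\b$ from local to true $(\bF,\Pp)$-martingales. The approach is standard: localize along a reducing sequence of stopping times, take conditional expectations, and pass to the limit by dominated convergence. The dominating random variable uses the $\mathcal L^2$-bound on $Y$ and the $\widehat{\mathcal L}^{(i)}$-bounds on $\Theta^{Y,(i)}$, combined with a uniform control of the prefactor: since $q^u \leq q^{u_M}$ and $\Phi(z,u) \leq z$, one has
\begin{equation*}
e^{\eta(\bar X^N_t - \bar X^u_t)e^{rT}} \leq \exp\!\Bigl(\eta e^{rT}\int_0^T q^{u_M}_s\,\ud s\Bigr),
\end{equation*}
whose exponential moments of any order are finite by Assumption \ref{ass_app_premium}(ii). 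This bound, together with the exponential moments of $C_T$ and of $\int_0^T\lambda_s\,\ud s$ from Proposition \ref{ADM}(a) needed to control the compensated jump integrals, supplies the uniform integrability required. Once the genuine (sub-)martingale property is obtained, Proposition \ref{VT1} concludes the proof.
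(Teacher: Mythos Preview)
Your proposal is correct and follows essentially the same route as the paper: define $D_t = e^{\eta \bar X^N_t e^{rT}} Y_t$, apply It\^o's product rule to $G^u_t = e^{\eta(\bar X^N_t - \bar X^u_t)e^{rT}} Y_t$, identify the drift as the nonnegative gap $\esssup\widetilde f - \widetilde f(\cdot,u)$, and invoke Proposition~\ref{VT1}. The only minor differences are that the paper (i) explicitly records at the outset the existence of a measurable maximizer $u^*$ via a measurable selection argument, and (ii) upgrades the compensated integrals to true martingales by directly verifying the $L^1$-integrability of the integrands against the compensators (using the inequality $2ab \leq a^2 + b^2$ and Proposition~\ref{mom}) rather than via localization and dominated convergence; both routes rely on the same uniform bound $e^{\eta(\bar X^N_t - \bar X^u_t)e^{rT}} \leq e^{\eta e^{rT}\int_0^T q^{u_M}_s\,\ud s}$ that you identify.
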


\begin{proof} First, let us observe that there exists $u^* \in \mathcal{U}$ which satisfies Equation \eqref{eqn:u*esssup}: by hypothesis $q^u_t(\lambda,u)$ and
	$\Phi(z,u)$ are continuous on $u \in U$ and $U$ is compact, hence measurability selection results (see e.g. \citet{benevs1971existence}) ensure that the maximizer is an $(\bF,\Pp)$-predictable process and Proposition \ref{ADM}(b) holds.
 
 Let $(Y,\Theta^{Y, \a}, \Theta^{Y, \b}) \in \mathcal{L}^2 \times \widehat{\mathcal{L}}^\a \times \widehat{\mathcal{L}}^\b$ be a solution to the BSDE \eqref{bsde}
and  $u^*\in {\mathcal U}$ be the process satisfying Equation \eqref{eqn:u*esssup}.
Define $D_t :=  e^{\eta \bar{X}^N_t e^{rT}} \ Y_t$, $t \in [0,T]$,
and observe that $D_T = e^{\eta X^N_T} \xi =1.$ We will prove that 
\begin{itemize}
\item[i)] $\{ D_t e^{ - \eta \bar{X}^u_t e^{rT}} , t \in [0, T] \}$ is an $(\bF,\Pp)$-sub-martingale for any $u \in \mathcal{U}$;  
\item[ii)] $\{ D_t e^{ - \eta \bar{X}^{u^*}_t e^{rT}} , t \in [0, T] \}$ is an $(\bF,\Pp)$-martingale,
\end{itemize}
\noindent then the statement will follow by Proposition \ref{VT1}.

i) By It\^o's product rule, for any $u \in \mathcal{U}$
\begin{align*}
&\ud (D_t \ e^{ - \eta \bar{X}^u_t e^{rT}})  =  \ud  ( e^{  \eta (\bar{X}^N_t - \bar{X}^u_t)e^{rT}} \ Y_t) \\
& =  e^{\eta (\bar{X}^N_{t^-}-\bar{X}^u_{t^-}) e^{rT}}\ \ud Y_t + Y_{t-} \ \ud ( e^{  \eta (\bar{X}^N_t - \bar{X}^u_t)e^{rT}} ) + \ud \left ( \sum_{s \leq t} \Delta Y_s \ \Delta \big(e^{  \eta (\bar{X}^N_s - \bar{X}^u_s)e^{rT} }\big) \right).
\end{align*}
Recalling \eqref{eqn:X_disc}, we notice that
\begin{equation}\label{c1}
\bar{X}^N_{t}-\bar{X}^u_{t} = \int_0^t e^{-rs}q^u_s\,\ud s
-\int_0^t\int_0^{+\infty} e^{-rs} (z-\Phi(z, u_s)) \,m^\a(\ud s,\ud z),
\end{equation}
and applying It\^o's formula we obtain
\begin{align*} \ud ( e^{  \eta (\bar{X}^N_t - \bar{X}^u_t)e^{rT}} ) =  &
\ \eta e^{r T}  e^{  \eta (\bar{X}^N_t - \bar{X}^u_t)e^{rT}} e^{-rt} \ q^u_t \ \ud t \\
&+ e^{  \eta (\bar{X}^N_{t-} - \bar{X}^u_{t-})e^{rT}}  \int_0^{+\infty} \big(e^{-\eta e^{r(T-t)}(z-\Phi(z,u_t))}-1\big) m^\a(\ud t, \ud z).
\end{align*}
Finally, in view of
\[
\ud Y_t = \int_0^{+ \infty} \Theta^{Y,\a}_t (z) \widetilde m^\a(\ud t, \ud z) + \int_0^{+ \infty} \Theta^{Y,\b}_t (z) \widetilde m^\b(\ud t, \ud z) + \esssup_{w \in \mathcal U}\widetilde f ( t, Y_{t^-} , \Theta^{Y, \a}_t(\cdot),w_t ) \ud t,
\]
after some calculations we get, for any $u \in \mathcal{U}$
\begin{multline}\label{Ver1}
\ud (D_t e^{ - \eta \bar{X}^u_t e^{rT}}) = \ud M^u_t +  e^{ \eta (\bar{X}^N_{t} - \bar{X}^u_{t})e^{rT}}
\biggl( \esssup_{w \in \mathcal{U}}  \widetilde{f} ( t, Y_{t^-} , \Theta^{\a}_t(\cdot ) , w_t ) -  \widetilde{f} ( t, Y_{t^-} ,\Theta^{\a}_t(\cdot ) , u_t ) \biggr) \ud t ,
\end{multline}
where for all $t\in[0,T]$
\begin{align}
    M^u_t = &\int_0^t \int_0^{+\infty} e^{\eta (\bar{X}^N_{s^-}-\bar{X}^u_{s^-}) e^{rT}} \ \Theta^{Y, \a}_s (z) \ e^{-\eta e^{r(T-s)}(z-\Phi(z,u_s))} \ \widetilde m^\a(\ud s, \ud z) \\
& +\int_0^t \int_0^{+\infty} Y_{s-} \ e^{\eta (\bar{X}^N_{s^-}-\bar{X}^u_{s^-}) e^{rT}} \biggl(e^{-\eta e^{r(T-s)}(z-\Phi(z,u_s))}-1\biggr) \ \widetilde m^\a(\ud s, \ud z) \\
& +\int_0^t \int_0^{+\infty} e^{\eta (\bar{X}^N_{s^-}-\bar{X}^u_{s^-}) e^{rT}}  \Theta^{Y, \b}_s (z) \ \widetilde m^\b(\ud s, \ud z). 
\end{align}
It remains to verify that, for any $u\in\mathcal{U}$, the process $\{M^u_t,\ t\in[0,T]\}$,
is an $(\bF,\Pp)$-martingale. To this end, it is sufficient to prove that the following three integrability conditions hold, 
\begin{gather*}
\mathbb{E} \left[ \int_0^T \int_0^{+\infty}  e^{\eta (\bar{X}^N_{t}-\bar{X}^u_{t}) e^{rT}} \ \big|\Theta^{Y, \a}_t (z) \big| e^{-\eta e^{r(T-t)}(z-\Phi(z,u_t))} \lambda_t F^\a(\ud z) \ud t \right] < +\infty, \\
\mathbb{E} \left[ \int_0^T \int_0^{+\infty}  \ e^{\eta (\bar{X}^N_{t}-\bar{X}^u_{t}) e^{rT}} \ |Y_t| \big| e^{-\eta e^{r(T-t)}(z-\Phi(z,u_t))}-1 \big| \lambda_t F^\a(\ud z) \ud t \right] < +\infty, \\
\mathbb{E} \left[ \int_0^T \int_0^{+\infty}  e^{\eta (\bar{X}^N_{t}-\bar{X}^u_{t}) e^{rT}} \ \big|\Theta^{Y, \b}_t (z) \big|  \rho F^\b(\ud z) \ud t \right] < +\infty.
\end{gather*}
Using \eqref{c1}, $\Phi(z,u_t) \leq z$, the well known inequality $2ab \leq a^2 + b^2$, $\forall a,b \in \R$ with the choice $a= e^{  \eta e^{rT} \int_0^T e^{-rt} q^{u_M}_t\,\ud t  }$ and $b=\Theta^{Y, \a}_t (z)$ the first expectation above is dominated by
\[
\begin{split}
&\mathbb{E} \left[ e^{  \eta e^{rT} \int_0^T e^{-rt} q^{u_M}_t\,dt  } \int_0^T  \int_0^{+\infty} \bigl| \Theta^{Y, \a}_t (z) \bigr| \lambda_t F^\a(\ud z) \ud t \right]  \\
&\leq \frac{1}{2} \left \{ \mathbb{E} \left[ e^{  2\eta e^{rT} \int_0^T e^{-rt}q^{u_M}_t\,\ud t  } \int_0^T \lambda_t \ud t \right] + \mathbb{E} \left[ \int_0^T \int_0^{+\infty} \bigl| \Theta^{Y, \a}_t (z) \bigr|^2  \lambda_t F^\a(\ud z) \ud t \right] \right \}. \end{split}
\]
Now, applying again inequality $2ab \leq a^2 + b^2$, for all $a,b \in \R$, with the choice  $a= e^{  \eta e^{rT} \int_0^T e^{-rt} q^{u_M}_t\,\ud t}$ and $b=\lambda_t$  we obtain
\[
\begin{split}
&\mathbb{E} \left[ \int_0^T \int_0^{+\infty}  e^{\eta (\bar{X}^N_{t}-\bar{X}^u_{t}) e^{rT}} \ \big|\Theta^{Y, \a}_t (z) \big| e^{-\eta e^{r(T-t)}(z-\Phi(z,u_t))} \lambda_t F^\a(\ud z) \ud t \right] \\
&\leq \frac{1}{4} \mathbb{E} \left[ e^{  4\eta e^{rT} \int_0^T e^{-rt}q^{u_M}_t\,\ud t  } \right] T
	+ \frac{1}{4} \mathbb{E} \left[ \int_0^T \lambda^2_t \ud t\right]\\
 & \quad \quad 
	+ \frac{1}{2} \mathbb{E} \left[ \int_0^T \int_0^{+\infty} \bigl| \Theta^{Y, \a}_t (z) \bigr|^2  \lambda_t F^\a(\ud z) \ud t \right] < +\infty,
\end{split}
\]
which is finite in view of Assumption \ref{ass_app_premium} ii), Proposition \ref{mom} and recalling that $\Theta^{Y, \a} \in \widehat{\mathcal{L}}^\a$. Similarly, the second expectation is lower than
\[
\begin{split}
	&\mathbb{E} \left[ e^{  \eta e^{rT} \int_0^T e^{-rt}q^{u_M}_t\,\ud t  }
	\int_0^T |Y_{t}| \ \lambda_t \ud t \right] \\
	&\le \frac{1}{2} \mathbb{E} \left[ \int_0^T \abs{Y_{t}}^2  \ud t \right]
	+ \frac{1}{4} \mathbb{E} \left[ e^{  4\eta e^{rT} \int_0^T e^{-rt}q^{u_M}_t\,\ud t  } \right] T
	+ \frac{1}{4} \mathbb{E} \left[ \int_0^T \lambda^4_t \ud t \right] <+\infty ,
\end{split}
\]
where the first term is finite because $Y \in \mathcal{L}^2$, the second is finite by Assumption  \ref{ass_app_premium} $ii)$ and the third follows by Proposition \ref{mom}.
Finally, the third expectation is lower than 
\[
\begin{split}
&\mathbb{E} \left[ \int_0^T \int_0^{+\infty}  e^{\eta (\bar{X}^N_{t}-\bar{X}^u_{t}) e^{rT}} \ \big|\Theta^{Y, \b}_t (z) \big|  \rho F^\b(\ud z) \ud t \right] \\
&\leq \frac{1}{2} \left \{ \mathbb{E} \left[ e^{  2\eta e^{rT} \int_0^T e^{-rt}q^{u_M}_t\,\ud t  } \right] + \mathbb{E} \left[ \int_0^T \int_0^{+\infty} \bigl| \Theta^{Y, \b}_t (z) \bigr|^2  \rho F^\b(\ud z) \ud t \right] \right \} <  +\infty.\end{split}
\]
  ii) Choosing $u=u^*$ in \eqref{Ver1} we get 
 $$ \ud (D_t e^{ - \eta \bar{X}^{u^*}_t e^{rT}}) = \ud M^{u^*}_t,$$
that is, $\{ D_t e^{ - \eta \bar{X}^{u^*}_t e^{rT}},\  t \in [0, T] \}$ is an $(\bF,\Pp)$-martingale.
\end{proof}

\noindent We discuss existence of solution to BSDE \eqref{bsde}. Due to unboundedness of the claim arrival intensity, the generator of the BSDE is not Lipschitz and then we rely on  \citet{Papa_Possa_Sapla2018}, where existence results for BSDEs with stochastic Lipschitz generators are provided.  

\begin{theorem}\label{T2}
Let Assumption \ref{ass_app_premium} be in force. 
There exists a unique solution $(Y,\Theta^{Y,\a}, \Theta^{Y,\b}) \in \mathcal{L}^2 \times \widehat{\mathcal{L}}^\a \times \widehat{\mathcal{L}}^\b$ to the BSDE \eqref{bsde}.
\end{theorem}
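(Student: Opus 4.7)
The plan is to invoke the general existence and uniqueness result for BSDEs driven by a martingale with stochastic Lipschitz generator established in \citet{Papa_Possa_Sapla2018}. The $(\bF,\Pp)$-martingale representation property that underpins their framework is exactly the content of Proposition \ref{mg representation}, applied to the two-component driving noise $(\widetilde m^\a,\widetilde m^\b)$. Once this representation is available, the proof reduces to checking: (i) square integrability of the terminal value $\xi$; (ii) measurability and a stochastic Lipschitz estimate for the driver
\begin{equation}
F(t,y,\theta(\cdot)) := \esssup_{u\in\mathcal U}\widetilde f\bigl(t,y,\theta(\cdot),u\bigr);
\end{equation}
(iii) the weighted integrability conditions on the Lipschitz coefficients and on $F(t,0,0)$ required by \citet{Papa_Possa_Sapla2018}.

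For (i), since $q^N_t\equiv 0$ and $c_t\ge 0$ we have $X^N_T \ge R_0 e^{rT} - e^{rT} C_T$, so $\xi \le e^{-\eta R_0 e^{rT}}\,e^{\eta e^{rT} C_T}$, and $\mathbb E[\xi^2]<+\infty$ by Proposition \ref{ADM}(a). For (ii), measurability of the essential supremum in $u$ over the compact set $U$ follows from continuity of $\widetilde f$ in $u$ together with a measurable selection argument, exactly as used in the proof of Theorem \ref{T1}. For the Lipschitz estimate, using $\Phi(z,u)\in[0,z]$ (hence $|1-e^{-\eta e^{r(T-t)}(z-\Phi(z,u))}|\le 1$) and $q^u_t\le q^{u_M}_t$, one obtains uniformly in $u$
\begin{align}
\bigl|\widetilde f(t,y,\theta,u)-\widetilde f(t,y',\theta',u)\bigr| &\le \bigl(\eta e^{rT}q^{u_M}_t+\lambda_{t^-}\bigr)|y-y'| + \lambda_{t^-}\!\int_0^{+\infty}\!|\theta(z)-\theta'(z)|\,F^\a(\ud z).
\end{align}
Since $|\esssup_u A_u-\esssup_u B_u|\le \esssup_u |A_u-B_u|$, an application of Cauchy--Schwarz (using that $F^\a$ is a probability measure) yields
\begin{align}
|F(t,y,\theta)-F(t,y',\theta')| &\le r_t\,|y-y'| + \vartheta_t\Bigl(\int_0^{+\infty}(\theta(z)-\theta'(z))^2\,\lambda_{t^-}F^\a(\ud z)\Bigr)^{1/2},
\end{align}
with predictable coefficients $r_t := \eta e^{rT} q^{u_M}_t+\lambda_{t^-}$ and $\vartheta_t:=\lambda_{t^-}^{1/2}$; moreover $F(t,0,0)\equiv 0$, which trivially disposes of the integrability condition on the generator at the origin.

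For (iii), the integrability conditions on $(r_t,\vartheta_t)$ demanded by \citet{Papa_Possa_Sapla2018} reduce to $p$-th moments of $\int_0^T \lambda_t^k\,\ud t$ and of $\int_0^T q^{u_M}_t\,\ud t$ for every finite exponent, and these follow respectively from Proposition \ref{mom} and from Assumption \ref{ass_app_premium}(ii). The main obstacle I foresee is the last bookkeeping step: translating the weighted $\mathcal L^2$-type estimates intrinsic to the setting of \citet{Papa_Possa_Sapla2018} (whose norms are tailored to the stochastic Lipschitz coefficients $r_t,\vartheta_t$) back to the unweighted spaces $\mathcal L^2\times \widehat{\mathcal L}^\a\times \widehat{\mathcal L}^\b$ appearing in the statement. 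This reconciliation relies crucially on the fact that the weights $r_t,\vartheta_t$ admit moments of arbitrary order, which makes the weighted and unweighted norms equivalent on the relevant subspace and turns the abstract fixed point solution into a genuine triple in $\mathcal L^2\times \widehat{\mathcal L}^\a\times \widehat{\mathcal L}^\b$, with uniqueness inherited from that of \citet{Papa_Possa_Sapla2018}.
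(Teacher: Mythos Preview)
Your overall strategy is identical to the paper's: rewrite the BSDE as driven by the two-dimensional random measure, invoke the martingale representation of Proposition \ref{mg representation}, and then verify the hypotheses of \citet[Theorem 3.5]{Papa_Possa_Sapla2018}. The Lipschitz estimate and the observation $F(t,0,0)=0$ are essentially the paper's as well.

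However, your item (iii) misidentifies what the framework of \citet{Papa_Possa_Sapla2018} actually demands. Their stochastic Lipschitz theory does \emph{not} require mere $p$-th moments of the Lipschitz coefficients: the terminal datum must satisfy $\mathbb E\bigl[e^{\widehat\beta A_T}\xi^2\bigr]<\infty$, where $A_T=\int_0^T \alpha_s^2\,\ud s$ and $\alpha_s^2$ is built from your $r_s,\vartheta_s$. In the present model $\alpha_s^2$ is controlled by $q^{u_M}_s+\lambda_{s^-}$, so one needs \emph{exponential} moments of $\int_0^T\lambda_s\,\ud s$ and of $\int_0^T q^{u_M}_s\,\ud s$. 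The former is supplied by Proposition \ref{ADM}(a), not by Proposition \ref{mom} (which only yields polynomial moments and would not close the argument); the latter is exactly Assumption \ref{ass_app_premium}(ii). Without this exponential bound the weighted-space existence theorem of \citet{Papa_Possa_Sapla2018} simply does not apply, so as written your proof has a genuine gap at precisely the step that distinguishes the stochastic Lipschitz setting from the classical Lipschitz one.

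Your final ``main obstacle'' is also misdiagnosed. Passing from the weighted solution space back to $\mathcal L^2\times\widehat{\mathcal L}^\a\times\widehat{\mathcal L}^\b$ is the easy direction, not because the weights have moments of arbitrary order, but because the weight is bounded \emph{below}: indeed $\alpha_t^2\ge 3\lambda_{t^-}\ge 3\min\{\lambda_0,\beta\}>0$ and $e^{\widehat\beta A_t}\ge 1$, so the weighted norm dominates a constant multiple of the unweighted one. The paper uses exactly this lower bound to conclude $Y\in\mathcal L^2$, and the structure of $\nu$ gives $\Theta^{Y}(\cdot,0)\in\widehat{\mathcal L}^\a$, $\Theta^{Y}(0,\cdot)\in\widehat{\mathcal L}^\b$ directly.
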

\noindent The proof is postponed to Appendix \ref{appendix:proofs}.
Gathering the results given in Theorem \ref{T1} and Theorem \ref{T2} yields the following.
\begin{corollary}\label{corollary}
Let Assumption \ref{ass_app_premium} be in force. Then,    
  \begin{itemize}
    \item[i)] $(W^N, \Theta^\a, \Theta^\b)  \in \mathcal{L}^2 \times \widehat{\mathcal{L}}^\a \times \widehat{\mathcal{L}}^\b$ is the unique solution to BSDE 
    \begin{equation} \label{bsde2}
\begin{split}
W^N_t & = \xi - \int_t^T \int_0^{+ \infty} \Theta^{\a}_s (z) \widetilde m^\a(\ud s, \ud z) - \int_t^T \int_0^{+ \infty} \Theta^{\b}_s (z) \widetilde m^\b(\ud s, \ud z)\\
	& \qquad - \int_t^T \esssup_{u \in \mathcal U}\widetilde f ( s, W^N_{s^-} , \Theta^{\a}_s(\cdot),u_s)\,\ud s ,
 \end{split}
\end{equation}
with terminal condition $\xi = e^{- \eta X^N_T}$, where
\begin{multline}\label{ftilde1}
\widetilde{f} ( t, W^N_{t^-} , \Theta^{\a}_t(\cdot ) , u_t ) = - W^N_{t-}\eta e^{r(T-t)} q^{u}_t \\
	+ \int_0^{+\infty} [W^N_{t-}+\Theta^{\a}_t (z) ] \big[ 1- e^{-\eta e^{r(T-t)}(z-\Phi(z,u_t))}\big]
	\lambda_{t^-} F^{(1)} (\ud z).
\end{multline} 
This completely characterizes the value process $\{V_t = e^{\eta \bar{X}^N_t e^{rT}} W^N_t,\ t \in [0,T]\}$ associated to the optimal reinsurance problem \eqref{eqn:minpb}; 
\item[ii)] any process $u^* \in \mathcal{U}$ which maximizes $\widetilde{f} ( t, W^N_{t^-} , \Theta^{\a}_t(\cdot ) , u_t )$ furnishes an optimal reinsurance strategy.
\end{itemize} 
\end{corollary}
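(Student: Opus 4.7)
The plan is to simply stitch Theorems \ref{T1} and \ref{T2} together, since the corollary is essentially an assembly result. First I would invoke Theorem \ref{T2} with terminal value $\xi = e^{-\eta X^N_T}$ to obtain a unique triple $(Y,\Theta^{Y,\a},\Theta^{Y,\b}) \in \mathcal{L}^2 \times \widehat{\mathcal{L}}^\a \times \widehat{\mathcal{L}}^\b$ solving BSDE \eqref{bsde}. Before doing so, I would briefly check that $\xi$ meets the integrability hypotheses required by Theorem \ref{T2}: using the explicit form of $X^N_T$ (set $u \equiv u_N$ in \eqref{eqn:X_explicit}, so $q^{u_N}\equiv 0$) together with Assumption \ref{ass_app_premium} and Proposition \ref{ADM}(a), one gets $\mathbb{E}[\xi^p] < +\infty$ for every $p \geq 1$, which is more than enough.

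Next I would apply the verification part of Theorem \ref{T1} to that triple. The theorem asserts $Y_t = W^N_t$ $\Pp$-a.s. for every $t\in[0,T]$; relabelling $\Theta^{\a}:=\Theta^{Y,\a}$ and $\Theta^{\b}:=\Theta^{Y,\b}$ then shows that $(W^N,\Theta^\a,\Theta^\b)$ solves BSDE \eqref{bsde2} (which is literally \eqref{bsde} with $Y$ replaced by $W^N$ in the driver \eqref{ftilde1}). Uniqueness of this solution in $\mathcal{L}^2 \times \widehat{\mathcal{L}}^\a \times \widehat{\mathcal{L}}^\b$ is inherited from Theorem \ref{T2}, since any other solution to \eqref{bsde2} is in particular a solution of \eqref{bsde} with the same terminal data.

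To conclude the characterisation of the value process in item (i), I would invoke identity \eqref{eqn:VisWI}, namely $V_t = e^{\eta \bar{X}^N_t e^{rT}} W^N_t$, which was established before Theorem \ref{T1} as a direct consequence of \eqref{eqn:W^u_and_V} specialised to $u=u_N$. This yields $V_t$ pathwise from $W^N_t$, completing item (i).

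Item (ii) is then immediate from the optimal-control part of Theorem \ref{T1}: any predictable $U$-valued selector $u^*$ of the essential supremum in \eqref{eqn:u*esssup} is admissible (by Proposition \ref{ADM}(b)) and optimal. Existence of such a selector was already justified in the opening paragraph of the proof of Theorem \ref{T1} via a measurable selection argument, using continuity of $q^u_t$ and $\Phi(\cdot,u)$ in $u$ and compactness of $U$. I do not anticipate any genuine obstacle here, since all the analytic work has been done upstream in Theorems \ref{T1}--\ref{T2}; the only pitfalls would be bookkeeping ones, namely verifying the integrability of $\xi$ and ensuring the relabelling $Y \leftrightarrow W^N$, $\Theta^{Y,\bullet} \leftrightarrow \Theta^{\bullet}$ is consistent between \eqref{bsde} and \eqref{bsde2}.
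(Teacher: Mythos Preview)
Your proposal is correct and matches the paper's approach exactly: the paper simply states that the corollary follows by ``gathering the results given in Theorem \ref{T1} and Theorem \ref{T2}'' without further elaboration, and your write-up just spells out that assembly in detail. The only minor redundancy is that the integrability check on $\xi$ is already carried out inside the proof of Theorem \ref{T2} (point $\mathbf{(F2)}$ in the appendix), so you need not re-verify it as a separate hypothesis before invoking that theorem.
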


\begin{remark}\label{JUMPS}
The process $W^N$ completely determines the predictable random fields $\Theta^\a$ and $\Theta^\b$ outside a null set with respect to the measure $F^\a(\ud z) \ud t$ and $F^\b(\ud z) \ud t$. Indeed, let $\widetilde \Theta^{(i)}_t(z)$ an  $\bF$-predictable random field such that, for any $i=1,2$
$$\Delta W^N_{T_n^{(i)}} = \Theta^{(i)}_{T_n^{(i)}}(Z_n^{(i)})=\widetilde \Theta^{(i)}_{T_n^{(i)}}(Z_n^{(i)}).$$
Thus, for any $t \in [0,T]$, Borel set $C$ of $[0,+ \infty)$ and $i=1,2$
$$
0= \mathbb{E} \left[ \int_0^t \int_C \left | \Theta^{(i)}_t(z) - \widetilde \Theta^{(i)}_t(z) \right | m^{(i)}( \ud s, \ud z)\right ] = \mathbb{E} \left[ \int_0^t \int_C \left | \Theta^{(i)}_t(z) - \widetilde \Theta^{(i)}_t(z) \right | \nu^{(i)}( \ud s, \ud z) \right ]
$$
and, recalling equation \eqref{dualpred}, it implies that $\Theta^{(i)}_t(z)(\omega)= \widetilde \Theta^{(i)}_t(z)(\omega)$, $F^{(i)}(\ud z) \ud t P(\ud \omega)$-a.e., for $i=1,2$.
\end{remark}
In the sequel, we will focus on a Markovian framework, that is, Assumption \ref{ass:markovian} holds and the next result provides a more explicit representation for $\Theta^{(1)}$  and $\Theta^{(2)}$ in terms of the value function $\varphi(t, \lambda)$ given in \eqref{varphi}.
\begin{proposition}\label{phi}
Under Assumption \ref{ass:markovian} the following equalities hold
\begin{align} 
 \Theta^{(1)}_t(z)  & = e^{-\eta \bar{X}^N_{t-} e^{rT}} \left[ e^{\eta z e^{r(T-t)}} \varphi(t, \lambda_{t-} +\ell(z)) -  \varphi(t, \lambda_{t-}) \right ], \quad  F^{(1)}(\ud z) \ud t \ud \Pp-a.e.\label{jump1}\\
\Theta^{(2)}_t(z)  & = e^{-\eta \bar{X}^N_{t-} e^{rT}} \left[ \varphi(t, \lambda_{t-} +z) -  \varphi(t, \lambda_{t-}) \right] \quad  F^{(2)}(\ud z) \ud t \ud \Pp-a.e. \label{jump2}
\end{align}
\end{proposition}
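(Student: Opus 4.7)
The plan is to compute the jumps of $W^N$ directly from the Markovian expression $W^N_t = e^{-\eta \bar X^N_t e^{rT}} \varphi(t,\lambda_t)$ (which follows from \eqref{eqn:VisWI} combined with \eqref{nuova}), and then to match them with the jumps produced by the stochastic integral representation in the BSDE \eqref{bsde2}. Remark \ref{JUMPS} then identifies the predictable jump fields $\Theta^{(1)}$ and $\Theta^{(2)}$ up to $F^{(i)}(\ud z)\ud t\ud\Pp$-null sets, which is exactly what \eqref{jump1}--\eqref{jump2} assert.

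Concretely, I would distinguish the two sources of jumps. At a jump time $T^{(1)}_n$ of $N^{(1)}$ with mark $Z^{(1)}_n$, the null-reinsurance wealth $\bar X^N$ jumps by $-e^{-rT^{(1)}_n} Z^{(1)}_n$ because $\Phi(z,u_N)=z$ and $q^{u_N}\equiv 0$, while the intensity $\Lambda$ jumps by $\ell(Z^{(1)}_n)$. At a jump time $T^{(2)}_n$ of $N^{(2)}$ with mark $Z^{(2)}_n$, $\bar X^N$ has no jump (since $N^{(1)}$ and $N^{(2)}$ share no common jump times) and $\Lambda$ jumps by $Z^{(2)}_n$. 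Plugging these increments into the product form of $W^N$, a short calculation delivers exactly the right-hand sides of \eqref{jump1} evaluated at $(T^{(1)}_n, Z^{(1)}_n)$, and of \eqref{jump2} evaluated at $(T^{(2)}_n, Z^{(2)}_n)$.

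On the other hand, reading off \eqref{bsde2}, the only source of jumps in $W^N$ is the pair of stochastic integrals against $\widetilde m^{(1)}$ and $\widetilde m^{(2)}$, so $\Delta W^N_{T^{(i)}_n} = \Theta^{(i)}_{T^{(i)}_n}(Z^{(i)}_n)$ for $i=1,2$. Defining $\widetilde\Theta^{(i)}_t(z)$ as the right-hand side of \eqref{jump1} or \eqref{jump2} respectively, these candidates are $\bF$-predictable random fields since $\bar X^N_{t-}$ and $\lambda_{t-}$ are predictable and the remaining dependence on $(t,z)$ is Borel. By construction they match the jumps of $W^N$, so Remark \ref{JUMPS} yields $\Theta^{(i)} = \widetilde\Theta^{(i)}$ in the required almost-everywhere sense. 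The only real point to keep an eye on is the Borel measurability of $\varphi$, which is inherited from its definition \eqref{varphi} as an essential infimum of conditional expectations under the Markovian dynamics of $\Lambda$; crucially, no continuity or smoothness of $\varphi$ is needed, which is precisely the advantage of this BSDE-based route over the HJB argument of Section \ref{HJB section}.
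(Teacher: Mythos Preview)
Your proposal is correct and follows essentially the same approach as the paper's own proof: both compute the jumps of $W^N$ at $T^{(i)}_n$ from the Markovian product form $W^N_t = e^{-\eta \bar X^N_t e^{rT}}\varphi(t,\lambda_t)$, using the explicit increments of $\bar X^N$ and $\Lambda$ at these times, and then invoke Remark \ref{JUMPS} to identify the predictable fields $\Theta^{(i)}$ up to $F^{(i)}(\ud z)\ud t\ud\Pp$-null sets. Your remark on the Borel measurability of $\varphi$ is a welcome addition, though the paper does not dwell on it.
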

\begin{proof}
Under Assumption \ref{ass:markovian} for any $t \in [0,T]$, $V_t = \varphi(t, \lambda_t)$.
From \eqref{eqn:VisWI} and recalling that 
$\bar X^N_t = R_0 + \int_0^t e^{-rs} c_s \,\ud s
-\int_0^t\int_0^{+\infty} e^{-rs} z\,m^\a(\ud s,\ud z)$ 
we get that $\bar{X}^N_{T_n^\a} = \bar{X}^N_{T_n^{\a^-}} - Z^\a_n e^{- r T_n^\a}$. Thus,
\begin{equation}\begin{split}
    \Theta^{(1)}_{T_n^{(1)}}(Z_n^{(1)}) = & W^N_{T_n^\a} - W^N_{T_n^{\a^-}} = \exp \left ({-\eta \bar{X}^N_{T_n^\a}{e^{rT}} }\right ) V_{T_n^\a} - \exp\left ({-\eta \bar{X}^N_{T_n^{\a^-}}{e^{rT} }}\right ) V_{T_n^{\a^-}} \\
    = & \exp\left ({-\eta \bar{X}^N_{T_n^{\a^-}}{e^{rT} }}\right ) \big ( V_{T_n^\a}e^{\eta Z^\a_n e^{r(T-T_n^\a)} } - V_{T_n^{\a^-}}\big ) \\
    =& \exp\left ({-\eta \bar{X}^N_{T_n^{\a^-}}{e^{rT} }}\right ) \big ( \varphi({T_n^\a}, \lambda_{T_n^\a} ) e^{\eta Z^\a_n e^{r(T-T_n^\a)} } - \varphi({T_n^{\a^-}}, \lambda_{T_n^{\a^-}} )\big ). 
\end{split}
\end{equation}
 \noindent Now taking \eqref{intensity_eq} into account,  we have that $\lambda_{T_n^\a} = \lambda_{T_n^{\a^-}}  + \ell(Z^\a_n)$ and from   Remark \ref{JUMPS} we obtain \eqref{jump1}. Analogously, since  
$\Delta \bar{X}^N_{T_n^\b} =0 $ and 
$\lambda_{T_n^\b} = \lambda_{T_n^{\b^-}}  + Z^\b_n$, we can write
\begin{equation}\begin{split}
    \Theta^{(2)}_{T_n^{(2)}}(Z_n^{(2)}) = & W^N_{T_n^\b} - W^N_{T_n^{\b^-}} = e^{-\eta \bar{X}^N_{T_n^\b}{e^{rT}} } \big ( V_{T_n^\b} -  V_{T_n^{\b^-}} \big )\\
    =& e^{-\eta \bar{X}^N_{T_n^\b}{e^{rT}} } \big ( \varphi({T_n^\b}, \lambda_{T_n^{\b^-}}  + Z_n^\b)  - \varphi({T_n^{\b^-}}, \lambda_{T_n^{\b^-}} )\big ), 
    \end{split}\end{equation}
    which implies \eqref{jump2}. 
\end{proof}


\section{The optimal reinsurance strategy}\label{Optimal_Reinsurance}

\noindent The purpose of this section is to provide more insight into the structure of the optimal reinsurance strategy and investigate some special cases. We focus on a Markovian setting, and so we make Assumption \ref{ass:markovian} in force.\\
The following general result provides a characterization of the optimal reinsurance strategy in the one-dimensional case, where $\Phi(z,u)$ is increasing in $u$, with $u \in [u_M, u_N] \subset \overline{ \mathbb R}$.
In order to obtain some definite results we need to introduce a concavity hypothesis for the function $\widetilde{f}$ given in \eqref{ftilde1} with respect to the variable $u \in [u_M,u_N]$.

\begin{proposition}\label{prop:optreins_general}
Under Assumption \ref{ass_app_premium}, 
suppose that $\Phi (z, u)$ is differentiable in $u\in [u_M,u_N]$ for almost every $z \in (0, + \infty)$ and $\widetilde{f}$ given in Equation \eqref{ftilde1} is strictly concave in $u\in [u_M,u_N]$. Then, the optimal reinsurance strategy $u^*_t=\{ u^*(t,\lambda_{t^-}), t\in[0,T] \}$ is given by \eqref{u*_general_HJB}.
\end{proposition}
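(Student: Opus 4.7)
The plan is to invoke Corollary \ref{corollary}(ii), which reduces the problem to the pointwise (in $(t,\omega)$) maximization of $\widetilde{f}(t, W^N_{t^-}, \Theta^{\a}_t(\cdot), u)$ over $u\in[u_M,u_N]$, and to express this objective via the value function $\varphi$ through the jump representations in Proposition \ref{phi}. Under Assumption \ref{ass:markovian}, combining \eqref{eqn:VisWI} with \eqref{nuova} gives $W^N_{t^-} = e^{-\eta \bar{X}^N_{t^-} e^{rT}} \varphi(t,\lambda_{t^-})$, and formula \eqref{jump1} yields
\[
W^N_{t^-} + \Theta^{\a}_t(z) = e^{-\eta \bar{X}^N_{t^-} e^{rT}} e^{\eta z e^{r(T-t)}} \varphi(t,\lambda_{t^-}+\ell(z)).
\]
Substituting into \eqref{ftilde1} produces
\[
\widetilde{f}(t, W^N_{t^-}, \Theta^{\a}_t(\cdot), u) = e^{-\eta \bar{X}^N_{t^-} e^{rT}} \eta e^{r(T-t)} \widehat{f}(t,\lambda_{t^-},u),
\]
where $\widehat{f}(t,\lambda,u)$ collects the $u$-dependent terms expressed through $\varphi(t,\lambda)$ and $\varphi(t,\lambda+\ell(z))$ only; crucially, the positive prefactor $e^{-\eta \bar{X}^N_{t^-}e^{rT}}\eta e^{r(T-t)}$ does not depend on $u$.

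Next I would differentiate $\widetilde{f}$ in $u$ on $[u_M,u_N]$. The hypothesis that $\Phi(z,u)$ is differentiable in $u$ for a.e. $z$, together with Assumption \ref{ass_app_premium} and Proposition \ref{mom}, justifies dominated-convergence-type differentiation under the integral sign, yielding
\[
\frac{\partial \widetilde{f}}{\partial u}(t, W^N_{t^-}, \Theta^{\a}_t(\cdot), u) = e^{-\eta \bar{X}^N_{t^-} e^{rT}}\,\eta e^{r(T-t)}\, h(t,\lambda_{t^-},u),
\]
with $h$ as in \eqref{eq:h}. Strict concavity of $\widetilde{f}$ in $u$ is then equivalent to $h(t,\lambda,\cdot)$ being strictly decreasing on $[u_M,u_N]$, so the sign analysis is straightforward: if $(t,\lambda_{t^-})\in A_0$ then $h(t,\lambda_{t^-},\cdot)<0$ throughout and $\widetilde{f}$ is decreasing, giving $u^\ast = u_M$; if $(t,\lambda_{t^-})\in A_1$ then $h>0$ throughout and $\widetilde{f}$ is increasing, giving $u^\ast = u_N$; otherwise the intermediate value theorem provides a unique $\bar{u}(t,\lambda_{t^-})\in(u_M,u_N)$ with $h(t,\lambda_{t^-},\bar{u})=0$, characterized by \eqref{1st_order_genHJB}.

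Finally, I need to confirm that the pointwise maximizer $u^\ast(t,\lambda_{t^-})$ defines an element of $\mathcal{U}$. Continuity of $q$ and $\Phi$ in $u$, together with $\bF$-predictability of $(t,\omega)\mapsto(\lambda_{t^-},q(t,\lambda_{t^-},u),\Phi(z,u))$, permits a measurable-selection argument identical to the one invoked in the proof of Theorem \ref{T1} (cf.\ \citet{benevs1971existence}), producing an $\bF$-predictable selector; admissibility then follows from Proposition \ref{ADM}(b). The main (and essentially only) subtle point is the differentiation under the integral, which must be controlled uniformly in $u$ on $[u_M,u_N]$ using the exponential moment bounds in Assumption \ref{ass_app_premium}; the remaining structural argument is parallel to that of Proposition \ref{strategyHJB}, with the key gain being that it proceeds directly from the BSDE characterization \eqref{bsde2} and hence requires no a priori smoothness of $\varphi(t,\lambda)$.
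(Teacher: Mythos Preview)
Your proposal is correct and follows essentially the same route as the paper: invoke Corollary \ref{corollary}(ii), use \eqref{eqn:VisWI}, \eqref{nuova} and \eqref{jump1} to rewrite $\widetilde{f}$ in terms of $\varphi$, factor out the positive $u$-independent term $e^{-\eta \bar{X}^N_{t^-}e^{rT}}$, compute $\partial_u\widetilde{f}$ to obtain $\eta e^{r(T-t)}e^{-\eta \bar{X}^N_{t^-}e^{rT}}h(t,\lambda,u)$, and then run the sign analysis on $h$ using strict concavity. One minor imprecision: the factor $\eta e^{r(T-t)}$ does not pull out of $\widetilde{f}$ itself (only of its $u$-derivative), so your intermediate factorization $\widetilde{f}=e^{-\eta \bar{X}^N_{t^-}e^{rT}}\eta e^{r(T-t)}\widehat{f}$ is slightly off, but this does not affect the argument.
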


\begin{proof}
To stress the dependence of $\widetilde f$, given in \eqref{ftilde1}, on $\lambda$ and to explicitly feature $\varphi(t,\lambda_{t^-})$ into its expression, we introduce the function $\bar f(t,\lambda,u)$ by setting
$\bar{f}(t, \omega, \lambda_{t^-}(\omega), u):= \widetilde{f} ( t, \omega, W^N_t(\omega), \Theta^\a_t ( \cdot )(\omega), u)$,
which is continuous and strictly concave in $u\in[u_M,u_N]$ by hypothesis.  Indeed, from  \eqref{eqn:VisWI}, \eqref{nuova} and \eqref{jump1}, $\bar{f}(t, \lambda_{t^-}, u)$ can be written as
\begin{equation}
\begin{split}
\bar{f}(t, \lambda_{t^-}, u)= & -e^{- \eta \bar X^N_{t-} e^{rT}} \Big ( \varphi(t, \lambda_{t^-} )\eta e^{r(T-t)} q(t,\lambda_{t^-},u_t)  \\
	& - \int_0^{+\infty} \varphi(t, \lambda_{t^-} + \ell(z)) e^{\eta e^{r(T-t)z}} \big[ 1- e^{-\eta e^{r(T-t)}(z-\Phi(z,u_t))}\big]
	\lambda_{t^-} F^{(1)} (\ud z) \Big ).
 \end{split}
\end{equation}
Thus, the first order condition
\begin{equation}
    \label{nn}
 \frac{\partial \bar{f}}{\partial u}(t,  \lambda, u) = \eta e^{r(T-t)} e^{- \eta \bar X^N_{t-} e^{rT}} h(t, \lambda,u)=0,
 \end{equation}
reads as $h(t, \lambda,u)=0$, with $h(t, \lambda,u)$ defined in \eqref{eq:h}, and we obtain \eqref{1st_order_genHJB}. 
Finally, recalling that $ \frac{\partial \bar{f}}{\partial u}(t,  \lambda, u) $ is a decreasing function on $u\in [u_M, u_N]$ we 
have three possible cases, in terms of the regions $A_0$ and $A_1$ defined in Proposition \ref{strategyHJB}:

(i) for $(t,\lambda) \in A_0$,  $\ds \frac {\partial \bar{f}}{\partial u}(t,\lambda,u) <  0 $, so $\bar{f}(t, \lambda,u)$ is decreasing in $u\in  [u_M, u_N]$, hence $u^*(t, \lambda) = u_M$;

(ii) for $(t,\lambda) \in A_1$,  $\ds\frac {\partial \bar{f}}{\partial u}(t,\lambda,u) > 0 $, so $\bar{f}(t, \lambda,u)$ is increasing in $u\in [u_M, u_N]$, hence $u^*(t, \lambda) = u_N$;

(iii) otherwise there exists
$\bar{u}(t,\lambda) \in (u_M,u_N)$ such that $\ds \frac{\partial \bar{f}}{\partial u}(t,\lambda,u)\Big{|}_{u=\bar u}=0$ and
$u^*(t, \lambda) = \bar u(t, \lambda)$. 
 
\end{proof}

\begin{remark}
If $q(t, \lambda,u)$ and $\Phi (z, u)$ are linear or convex on $u \in  [u_M,u_N]$ then $\widetilde{f}$ is strictly concave in $u\in[u_M,u_N]$
and Proposition \ref{prop:optreins_general} applies.
\end{remark}

Let us observe that using the BSDEs approach we obtain a quasi-explicit expression for the optimal reinsurance strategy aligning with Proposition \ref{strategyHJB}, but without any regularity assumption on the value function $\varphi(t,\lambda)$, which is required in the HJB-equation approach.

In the case of Cox process with shot noise intensity, $\ell(z)=0$ for any $z>0$, the function $h(t, \lambda,u)$ reads as
$h(t, \lambda,u) =  
 - \varphi(t,\lambda) \gamma(t, \lambda, u)$
 with 
 \begin{equation}\label{gamma}
 \gamma(t, \lambda, u) = \frac{\partial q}{\partial u}(t, \lambda,u) + \int_{0}^{+\infty}  e^{\eta e^{r (T-t)}\Phi (z, u)}  \frac{\partial \Phi}{\partial u}(z,u) \lambda F^{(1)} (\ud z).
 \end{equation} 
Hence, since $\varphi(t,\lambda)>0$ (see Remark \ref{positive}) the two regions $A_0$, $A_1$ and the optimal control do not depend on the function $\varphi(t,\lambda)$ anymore. Precisely, we have the following result.

\begin{proposition}[Cox process with shot noise intensity]
\label{COX prop}
Under the same assumptions as Proposition \ref{prop:optreins_general}, and with the additional condition that
$\ell(z) =0$ for any $z>0$, 
then, the optimal reinsurance strategy $u^{*,cox}_t=\{ u^{*,cox}(t,\lambda_{t^-}), t\in[0,T] \}$ is given by
\begin{equation}\label{u*_cox}
u^{*,cox}(t,\lambda_{t^-}) =
\begin{cases}
u_M		& (t,\lambda_{t^-}) \in A_0 \\
u_N			& (t,\lambda_{t^-})\in A_1,\\
\bar{u}^{cox}(t,\lambda_{t^-})	& \text{otherwise}, 
\end{cases}
\end{equation}
where 
\begin{align*}
A_0 &= \left\{ (t,\lambda) \in [0,T] \times (0, + \infty) : \gamma(t, \lambda, u_M ) > 0 \right\} \\
 A_1 &=  \left\{ (t,\lambda) \in [0,T] \times (0, + \infty) : \gamma(t, \lambda, u_N ) < 0 \right\},
\end{align*}
 with $\gamma(t, \lambda,u)$ given in \eqref{gamma} and $\bar{u}^{cox}(t,\lambda) \in (u_M,u_N)$ solves the following equation
\begin{equation}\label{1st_order_gen1}
-   \frac{\partial q}{\partial u}(t, \lambda,u) = \int_{0}^{\infty}  e^{\eta e^{r (T-t)}\Phi (z, u)}  \frac{\partial \Phi}{\partial u}(z,u) \lambda F^{(1)} (\ud z).
\end{equation}
\end{proposition}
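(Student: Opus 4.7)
The plan is to deduce this proposition as a direct corollary of Proposition \ref{prop:optreins_general} by exploiting the simplification that occurs when $\ell \equiv 0$. Since all the structural assumptions of Proposition \ref{prop:optreins_general} are preserved (Assumption \ref{ass_app_premium}, differentiability of $\Phi$ in $u$, strict concavity of $\widetilde f$ in $u$), I can simply specialize the expression for $h(t,\lambda,u)$ given in \eqref{eq:h}. With $\ell(z)\equiv 0$, the argument $\lambda+\ell(z)$ collapses to $\lambda$, so $\varphi(t,\lambda+\ell(z))$ may be pulled out of the integral. This yields the factorization
\[
h(t,\lambda,u) \;=\; -\,\varphi(t,\lambda)\,\gamma(t,\lambda,u),
\]
with $\gamma$ as defined in \eqref{gamma}.

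Next, I would invoke Remark \ref{positive}, which guarantees that $\varphi(t,\lambda)>0$ for every $(t,\lambda)\in[0,T]\times(0,+\infty)$. Because the scalar factor $\varphi(t,\lambda)$ is strictly positive, the sign of $h(t,\lambda,u)$ is the opposite of the sign of $\gamma(t,\lambda,u)$, and the equation $h(t,\lambda,u)=0$ is equivalent to $\gamma(t,\lambda,u)=0$. Substituting into the region definitions of Proposition \ref{prop:optreins_general} then gives
\[
A_0 = \{h(\cdot,u_M)<0\} = \{\gamma(\cdot,u_M)>0\}, \qquad
A_1 = \{h(\cdot,u_N)>0\} = \{\gamma(\cdot,u_N)<0\},
\]
exactly as stated, and the first order equation \eqref{1st_order_genHJB} reduces to \eqref{1st_order_gen1}.

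Finally, I would observe that the interior minimizer $\bar u^{cox}(t,\lambda)$ is well defined because strict concavity of $\widetilde f$ in $u$ is preserved (it does not use $\ell$); equivalently, $u\mapsto \gamma(t,\lambda,u)$ is strictly increasing on $[u_M,u_N]$, so the three cases in \eqref{u*_cox} exhaust $[0,T]\times(0,+\infty)$ in a measurable way, and the resulting process $\{u^{*,\mathrm{cox}}(t,\lambda_{t^-}),\ t\in[0,T]\}$ is $\bF$-predictable and admissible by Proposition \ref{ADM}(b). There is essentially no obstacle here beyond a careful bookkeeping of the factor $\varphi(t,\lambda)$; the real content, namely the verification and the existence of the minimizer, was already carried out in Proposition \ref{prop:optreins_general}. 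The only thing worth emphasizing is that the characterization of the optimal strategy becomes fully independent of the (a priori unknown) value function $\varphi$, which is the whole point of singling out the Cox case.
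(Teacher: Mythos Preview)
Your proposal is correct and follows essentially the same route as the paper: the paper derives the proposition directly from Proposition \ref{prop:optreins_general} by observing that when $\ell(z)=0$ one has $h(t,\lambda,u)=-\varphi(t,\lambda)\gamma(t,\lambda,u)$, and then invokes Remark \ref{positive} ($\varphi>0$) to convert the sign conditions on $h$ into the stated sign conditions on $\gamma$ and to reduce \eqref{1st_order_genHJB} to \eqref{1st_order_gen1}. Your additional remarks on measurability and admissibility are fine but not needed beyond what Proposition \ref{prop:optreins_general} already provides.
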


\begin{remark}\label{cox_comp}
    When $q(t,\lambda, u) = \lambda d(t, u)$, with $d(t,u)$ deterministic function in $(t,u) \in [0,T] \times [u_N, u_M]$, as for the  premium considered in Example \ref{ex_EVP}, we get that $\gamma(t,\lambda,u) = \lambda \bar {\gamma}(t,u)$ with
    $$\bar {\gamma}(t,u) = \frac{\partial d}{\partial u}(t,u) + \int_{0}^{+\infty}  e^{\eta e^{r (T-t)}\Phi (z, u)}  \frac{\partial \Phi}{\partial u}(z,u) F^{(1)} (\ud z)$$ and the optimal strategy turns out to be a deterministic function on time. Precisely,
    \begin{equation}\label{bo}
u^{*,cox}(t) =
\begin{cases}
u_M		& t \in {\bar A}_0 \\
u_N			& t\in {\bar A}_1,\\
\bar{u}^{cox}(t)	& \text{otherwise},
\end{cases}
\end{equation}
where 
$$ 
{\bar A}_0 = \left\{ t\in [0,T] : \bar{\gamma}(t, u_M ) > 0 \right\}, \quad 
 {\bar A}_1 =  \left\{ t\in [0,T] : \bar{\gamma}(t, u_N ) <å 0 \right\}.$$

It is noteworthy that the optimal strategy remains independent of the claim arrival intensity or its dynamics, thus aligning with the optimal strategy under a constant claim arrival intensity. This implies that, within the considered premium principles (see Remark \ref{ex_EVP}), mitigating the risk arising from externally-excited jumps can be achieved solely by adjusting the premium rate. However, addressing the self-exciting effect requires adjustments in both the premium rate and reinsurance strategy, as outlined in Proposition \ref{prop:optreins_general}. In essence, the self-exciting effect poses a greater challenge, necessitating a more comprehensive array of risk management tools. A similar finding was reported by \citet{Cao_Landriault_Li} in a similar contagion model under EVP and the mean–variance criterion.
\end{remark}


\subsection{Optimal reinsurance under EVP}

We discuss proportional reinsurance and limited Excess-of-Loss with fixed reinsurance coverage, see Example \ref{ex_EVP}, when the reinsurance premium is computed according to the Expected Value Principle (EVP).

\subsubsection{Proportional reinsurance}

Let $\Phi(z,u) =z u$, $u \in [0,1]$.
According to \eqref{ex_EVP1}, the reinsurance premium reads as:
\begin{equation}\label{eqn:evp}
q_t^u = (1+ \theta _R ) \mathbb{E} [Z^{(1)}] \lambda_{t^- } (1-u_t), \quad \forall u \in \mathcal U.
\end{equation}
Notice that Assumption \ref{ass_app_premium} $ii)$ is automatically satisfied, since from Proposition \ref{ADM} (a), for every $a >0$,  
$\mathbb E \left[ e^{a \int_0^T \lambda_t \,dt} \right] < +\infty$.

\noindent In this special case we have the following result.

\begin{proposition}\label{propC}
Under Assumption \ref{ass_app_premium} $i)$, there exist two stochastic thresholds $\theta^F(t, \lambda_{t-})< \theta^N(t, \lambda_{t-})$ such that the optimal retention level is given by:
\begin{equation}
\label{ustar}
u^*_t= u^* (t, \lambda_{t-}) =
\begin{cases}
	0  & \text{if } \theta_R < \theta^F(t, \lambda_{t-})  
	\\
	1 & \text{if }  \theta_R >  \theta^N(t, \lambda_{t-}) 
	\\
	\bar{u}(t,\lambda_{t^-})	& \text{otherwise,}
\end{cases}
\end{equation}
where
\begin{align*}
\theta^F(t, \lambda) &= \frac{1}{ \mathbb{E} [Z^{(1)}] }
	\int_0^{\infty} \frac{\varphi(t,\lambda + \ell(z))}{\varphi(t,\lambda)} z F^{(1)} (\ud z) -1, \\
\theta^N(t, \lambda) &= \frac{1}{\mathbb{E} [Z^{(1)}] }
	\int_0^{\infty} \frac{\varphi(t,\lambda + \ell(z))}{\varphi(t,\lambda)} e^{\eta e^{r(T-t)} z} z  F^{(1)} (\ud z) -1
\end{align*}
and where $\bar{u}(t, \lambda) \in (0,1)$ solves the following equation, with respect to $u$:
\begin{equation}
\label{eqn:evp_prop_stationary}
(1+ \theta _R ) \mathbb{E} [Z^{(1)}] = \int_0^{+\infty}  \frac{\varphi(t,\lambda + \ell(z))}{\varphi(t,\lambda)} z e^{\eta e^{r (T-t)} z u}  F^{(1)} (\ud z).
\end{equation}
\end{proposition}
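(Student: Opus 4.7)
The plan is to deduce Proposition \ref{propC} as a direct specialization of Proposition \ref{prop:optreins_general} to the proportional retention $\Phi(z,u)=zu$ with $u\in[0,1]=[u_M,u_N]$ and the EVP reinsurance premium \eqref{eqn:evp}. First I would verify the hypotheses. Assumption \ref{ass_app_premium}$\,ii)$ is satisfied because $u_M=0$ gives $q^{u_M}_t=(1+\theta_R)\mathbb{E}[Z^{(1)}]\lambda_{t^-}$, so the exponential-moment requirement reduces to $\mathbb{E}\!\left[e^{a\int_0^T\lambda_t\,\ud t}\right]<+\infty$ for every $a>0$, which is given by Proposition \ref{ADM}(a). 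Differentiability of $\Phi$ in $u$ is immediate. Strict concavity of $\widetilde f$ in $u$, as defined in \eqref{ftilde1}, follows because $q^u$ is linear in $u$ and the map $u\mapsto 1-e^{-\eta e^{r(T-t)}z(1-u)}$ has second derivative $-\eta^{2}e^{2r(T-t)}z^{2}e^{-\eta e^{r(T-t)}z(1-u)}<0$ for $z>0$, while from Proposition \ref{phi} the weight appearing in the integral is strictly positive:
\[
W^N_{t^-}+\Theta^{(1)}_t(z)=e^{-\eta \bar X^N_{t^-}e^{rT}}\,e^{\eta z e^{r(T-t)}}\,\varphi(t,\lambda_{t^-}+\ell(z))>0.
\]

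Next I would evaluate $h(t,\lambda,u)$ from \eqref{eq:h} by substituting $\partial q/\partial u=-(1+\theta_R)\mathbb{E}[Z^{(1)}]\lambda$ and $\partial \Phi/\partial u=z$, which yields
\[
h(t,\lambda,u)=\lambda\varphi(t,\lambda)(1+\theta_R)\mathbb{E}[Z^{(1)}]-\lambda\int_0^{+\infty}\varphi(t,\lambda+\ell(z))\,z\,e^{\eta e^{r(T-t)}zu}F^{(1)}(\ud z).
\]
Dividing by $\lambda\varphi(t,\lambda)>0$ (using Remark \ref{positive}), the sign conditions $h(t,\lambda,0)<0$ and $h(t,\lambda,1)>0$ that define the regions $A_0$ and $A_1$ in Proposition \ref{prop:optreins_general} translate exactly into $\theta_R<\theta^F(t,\lambda)$ and $\theta_R>\theta^N(t,\lambda)$ with the thresholds as written in the statement. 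The interior stationarity equation $h(t,\lambda,u)=0$ is precisely \eqref{eqn:evp_prop_stationary}, characterizing $\bar u(t,\lambda)$.

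To conclude, the three-region decomposition in \eqref{ustar} is consistent because $\theta^F(t,\lambda)<\theta^N(t,\lambda)$, which follows from $e^{\eta e^{r(T-t)}z}>1$ for $z>0$ together with a.s. positivity of $Z^{(1)}$ and $\varphi(t,\lambda+\ell(z))>0$, so the integrand defining $\theta^N$ strictly dominates the one defining $\theta^F$. No step is a serious obstacle; the only point requiring care is the strict positivity of $W^N_{t^-}+\Theta^{(1)}_t(z)$ needed to lift pointwise strict concavity under the integral sign, and this is supplied by the explicit jump representation of Proposition \ref{phi}.
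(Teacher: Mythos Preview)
Your proposal is correct and follows essentially the same approach as the paper, which simply states that the result ``follows immediately from Proposition \ref{prop:optreins_general}.'' You carry out in detail exactly the specialization the paper leaves implicit: checking the hypotheses (including Assumption \ref{ass_app_premium}$\,ii)$ via Proposition \ref{ADM}(a), already noted in the paragraph preceding the proposition), computing $h(t,\lambda,u)$ for $\Phi(z,u)=zu$ and the EVP premium, and reading off the thresholds $\theta^F,\theta^N$ from the boundary conditions $h(t,\lambda,0)<0$, $h(t,\lambda,1)>0$.
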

\begin{proof}
It follows immediately from Proposition \ref{prop:optreins_general}.
\end{proof}

Let us briefly comment the previous result. We can distinguish three cases, depending on the stochastic intensity through the value function:
\begin{itemize}
\item  if the reinsurer's safety loading $\theta_R$ is smaller than the $\theta^F(t, \lambda_{t-})$, then  full reinsurance is optimal;
\item if  $\theta_R$ is larger than $\theta^N(t, \lambda_{t-})$, then null reinsurance is optimal and the contract is not subscribed;
\item lastly, if $\theta^F(t, \lambda_{t-}) \leq \theta_R \leq \theta^N(t, \lambda_{t-})$, then the optimal retention level  takes values in $(0,1)$, that is, the ceding company transfers to the reinsurance a non null percentage of risk (not the full risk).
\end{itemize}

 If the value function $\varphi(t,\lambda)$ is a strictly increasing function of $\lambda \in (0, + \infty)$ then $\theta^F(t, \lambda)>0$ for any $(t, \lambda) \in [0,T] \times (0, + \infty)$. As a consequence, full reinsurance may be allowed. In Section \ref{sec:comparison}  we will discuss the monotonicity property of the value function.

\begin{remark}[Cox process with shot noise intensity]\label{RemCox}
In case $\ell(z)=0$ we have that $\theta^F=0$ (i.e. full reinsurance is never optimal) and
$\theta^N(t)  = \frac{1}{\mathbb{E} [Z^{(1)}] }
	\int_0^{\infty} e^{\eta e^{r(T-t)} z} z  F^{(1)} (\ud z) -1>0$ for any $t \in [0,T)$.
 Thus the optimal reinsurance strategy is a deterministic function on time given by
\begin{equation}\label{ustarCox}
u^{*,cox}(t)= 
\begin{cases}
	1 & \text{if }  \theta_R >  \theta^N_t(t) 
	\\
	\bar{u}^{cox}(t)	& \text{if } \theta_R \leq \theta^N_t(t),
\end{cases}
\end{equation}
where $\bar{u}^{cox}(t)\in (0,1)$ is the solution to 
 $(1+ \theta _R ) \mathbb{E} [Z^{(1)}] = \int_0^{+\infty}  z e^{\eta e^{r (T-t)} z u}  F^{(1)} (\ud z)$.
 As already observed in Remark \ref{cox_comp} the same result is obtained when the claim arrival intensity is constant. 
\end{remark}

\subsubsection{Limited Excess-of-Loss with fixed reinsurance coverage}\label{sec:LXL}
 
The reinsurer's loss function is (see Example \ref{ex_reinsurance}$(3)$):
\begin{equation}
z - \Phi (z,u) =  z- \Phi(z,(u_1,u_2)) = (z- u_1 )^{+} - (z- u_2 )^{+} =
\left\{
\begin{array}{lcl}
0 & \textrm{if} & z \le u_1 \\
z - u_1 & \textrm{if} & z \in (u_1,u_2) \\
u_2- u_1 & \textrm{if} & z \ge u_2 , \\
\end{array}
\right.
\end{equation}
 with $u_1 < u_2$, so that the retention function is $\Phi (z,u) = z-  (z- u_1 )^{+} + (z- u_2 )^{+}$, for every $(z,u) \in [0,+\infty) \times U$.
 
To obtain explicit results we will reduce our analysis to the case where the control is $u=u_1$, while $u_2=u_1+\beta_M$ is unequivocally determined, with $\beta_M>0$ being the fixed maximum reinsurance coverage.
According to \eqref{ex_EVP1}, the EVP becomes
\begin{equation}\label{eq:q_stop_loss}
q(\lambda_{t^-},u_t) =  (1+\theta_R)\lambda_{t^-}  \int_{u_t}^{u_t+\beta_M} (1- F^{(1)} (z)) \ud z , \quad \forall u \in \mathcal U.
\end{equation}
Observe that the Assumption \ref{ass_app_premium} $ii)$ is automatically satisfied since by Proposition \ref{ADM} (a), for every $a >0$,  
$\mathbb E \left[ e^{a \int_0^T \lambda_t \,\ud t} \right] < +\infty$.
\begin{proposition}
\label{prop:u*LSL}
Under Assumption \ref{ass_app_premium} $i)$, there exists a stochastic threshold $\theta^L(t, \lambda_{t-})$ such that
\begin{equation}
\label{ustar_LSL}
u^*_t = u^*(t, \lambda_{t-})=
\begin{cases}
	0  & \text{if } \theta_R < \theta^L(t, \lambda_{t-})
	\\
	\bar{u}(t,\lambda_{t-}) & \text{if} \hskip 2mm \theta_R \geq \theta^L(t, \lambda_{t-}),
\end{cases}
\end{equation}
where
\begin{equation}\label{thetaL}
\theta^L(t, \lambda) = \frac{1}{F^\a(\beta_M) }
	\int_0^{\beta_M} \frac{\varphi(t, \lambda + \ell(z)) }{\varphi(t, \lambda)}  F^{(1)} (\ud z) -1,
\end{equation}
and $\bar{u}(t, \lambda) \in (0, +\infty)$ solves the following equation with respect to $u$:
\begin{equation}
\label{eqn:evp_prop_stationaryLSL}
(1+ \theta _R ) \bigl(F^\a(u+\beta_M)-F^\a(u)\bigr) =
	e^{\eta e^{r(T-t)} u} \int_u^{u+\beta_M}  \frac{\varphi(t, \lambda + \ell(z)) }{\varphi(t, \lambda)}  F^{(1)} (\ud z).
\end{equation}
\end{proposition}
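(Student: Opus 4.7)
The plan is to specialise Proposition \ref{prop:optreins_general} to the setting $U = [0,+\infty]$, $u_M = 0$, $u_N = +\infty$, with retention $\Phi(z,u) = z - (z-u)^+ + (z-u-\beta_M)^+$ and reinsurance premium given by \eqref{eq:q_stop_loss}. First I would record the derivatives needed in \eqref{eq:h}: $\frac{\partial \Phi}{\partial u}(z,u) = \mathbf{1}_{(u, u+\beta_M)}(z)$ for a.e.\ $z>0$, and $\frac{\partial q}{\partial u}(t,\lambda,u) = (1+\theta_R)\lambda\bigl[F^{(1)}(u)-F^{(1)}(u+\beta_M)\bigr]$. Since $\Phi(z,u)=u$ on the integration range $z\in(u,u+\beta_M)$, the factor $e^{\eta e^{r(T-t)}\Phi(z,u)}$ can be pulled out of the integral in \eqref{eq:h} as the constant $e^{\eta e^{r(T-t)}u}$, yielding
\[
h(t,\lambda,u) = \varphi(t,\lambda)(1+\theta_R)\lambda\bigl[F^{(1)}(u+\beta_M)-F^{(1)}(u)\bigr] - \lambda\, e^{\eta e^{r(T-t)}u}\int_u^{u+\beta_M}\varphi(t,\lambda+\ell(z))\,F^{(1)}(\ud z).
\]

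Next I would identify the sets $A_0$ and $A_1$ of Proposition \ref{prop:optreins_general}. Dividing the inequality $h(t,\lambda,0)<0$ by the strictly positive quantity $\lambda\,\varphi(t,\lambda)\,F^{(1)}(\beta_M)$ yields exactly $\theta_R < \theta^L(t,\lambda)$ with $\theta^L$ as in \eqref{thetaL}, producing the first branch of \eqref{ustar_LSL}. For the set $A_1$ associated to $u_N = +\infty$, I would show it is empty: as $u \to +\infty$, the difference $F^{(1)}(u+\beta_M) - F^{(1)}(u) \to 0$ while the factor $e^{\eta e^{r(T-t)} u}$ diverges exponentially, so the negative term in $h(t,\lambda,u)$ eventually dominates and $h(t,\lambda,u) \le 0$ for $u$ sufficiently large; hence $u^* = u_N$ never occurs. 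Consequently only the cases $u^* = 0$ and an interior critical point are possible, and dividing $h(t,\lambda,\bar u) = 0$ by $\lambda\,\varphi(t,\lambda)$ recovers precisely the stationarity equation \eqref{eqn:evp_prop_stationaryLSL} for $\bar u(t,\lambda)\in(0,+\infty)$.

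The main technical obstacle is verifying the strict-concavity hypothesis of Proposition \ref{prop:optreins_general} in $u$, since here $\Phi(z,\cdot)$ is neither convex nor concave and the remark following that proposition does not apply. To circumvent this I would compute $\frac{\partial \widetilde f}{\partial u}$ directly from \eqref{ftilde1} by splitting the $z$-integral into the three regions $\{z \le u\}$, $\{u < z \le u + \beta_M\}$ and $\{z > u + \beta_M\}$, applying Leibniz's rule, and checking that the boundary contributions at $z = u + \beta_M$ from the middle and tail pieces cancel, leaving
\[
\frac{\partial \widetilde f}{\partial u} = \lambda\,\eta\, e^{r(T-t)}\int_u^{u+\beta_M}\!\!\Bigl\{y(1+\theta_R) - [y+\Theta(z)]\, e^{-\eta e^{r(T-t)}(z-u)}\Bigr\}F^{(1)}(\ud z).
\]
Substituting $y = W^N_{t^-}$ and $\Theta(z) = \Theta^{(1)}_t(z)$ by Proposition \ref{phi} turns the integrand inside the braces into $e^{-\eta \bar X^N_{t^-} e^{rT}}\bigl\{\varphi(t,\lambda)(1+\theta_R) - e^{\eta e^{r(T-t)}u}\varphi(t,\lambda + \ell(z))\bigr\}$, which is strictly decreasing in $u$ through the $-e^{\eta e^{r(T-t)}u}$ factor; this ensures that any interior zero of $\frac{\partial \widetilde f}{\partial u}$ is unique and corresponds to a maximum. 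Combined with the sign at $u = 0$ (dictated by $\theta_R$ versus $\theta^L(t,\lambda)$) and the limiting sign as $u \to +\infty$ established above, this yields the trichotomy asserted in \eqref{ustar_LSL}.
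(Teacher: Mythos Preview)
Your overall strategy coincides with the paper's: specialise Proposition~\ref{prop:optreins_general} to this retention function and EVP premium, compute $h(t,\lambda,u)$ exactly as you do, and read off the two regimes from the boundary values of $h$. Your formula for $h$ agrees with the paper's.

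Two points where your argument diverges from, or falls short of, the paper's:

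\emph{The set $A_1$.} The paper does not argue that ``the negative term eventually dominates''. In fact the integral $\int_u^{u+\beta_M}\varphi(t,\lambda+\ell(z))\,F^{(1)}(\ud z)$ also tends to zero as $u\to+\infty$, so the product with $e^{\eta e^{r(T-t)}u}$ is a $0\cdot\infty$ indeterminate form and your dominance claim is unjustified. The paper simply observes that both summands in $h$ vanish at $u_N=+\infty$, i.e.\ $h(t,\lambda,+\infty)=0$, and hence $A_1=\{h(t,\lambda,u_N)>0\}=\emptyset$ directly from the definition.

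\emph{Strict concavity.} You are right that the remark after Proposition~\ref{prop:optreins_general} does not apply, since $\Phi(z,\cdot)$ is neither convex nor concave here; the paper's proof is terse on this point and merely asserts that the hypotheses hold because the premium is convex and $\Phi$ is a.e.\ differentiable. Your attempted workaround, however, has a gap: from the fact that the integrand $\varphi(t,\lambda)(1+\theta_R)-e^{\eta e^{r(T-t)}u}\varphi(t,\lambda+\ell(z))$ is strictly decreasing in $u$ for each fixed $z$ you cannot conclude that $\partial\widetilde f/\partial u$ has at most one zero, because the integration range $(u,u+\beta_M)$ itself moves with $u$. Monotonicity of a parametrised integrand does not propagate to an integral over a parameter-dependent domain. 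To make this step rigorous you would need an additional argument controlling how the moving window interacts with the weight $\varphi(t,\lambda+\ell(\cdot))F^{(1)}(\ud z)$---for example a monotonicity hypothesis on the conditional average---which neither you nor the paper supplies.
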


\begin{proof}
It is immediate to verify that the assumptions of Proposition \ref{prop:optreins_general} are satisfied,  because the premium in \eqref{eq:q_stop_loss} is convex in $u$ an $\Phi(z, u )$ is differentiable in $u\in (0,+ \infty)$ for any $z \neq u, u + \beta_M$. Moreover, $\frac{\partial \Phi(z, u )}{\partial u }=1$ for $z\in (u,u+\beta_M)$, while it is null elsewhere. 
Notice that the function $h(t, \lambda,u)$ given in \eqref{eq:h} in this special case reads as 
$$
h(t, \lambda,u)= (1+ \theta _R ) \lambda \varphi(t, \lambda) \bigl(F^\a(u+\beta_M)-F^\a(u)\bigr) -
	 \int_u^{u+\beta_M}  \varphi(t, \lambda + \ell(z)) \lambda e^{-\eta e^{r(T-t)}u} F^{(1)} (\ud z).
  $$
Thus, $h(t, \lambda,u_M) = h(t, \lambda,0) = (1+ \theta _R ) \lambda \varphi(t, \lambda) F^\a(\beta_M) -
	 \int_0^{\beta_M}  \varphi(t, \lambda + \ell(z)) \lambda  F^{(1)} (\ud z)$ and
  $h(t, \lambda,u_N)= h(t, \lambda,+ \infty) =0$. 
  As a consequence, null reinsurance is never optimal because $A_1 = \emptyset$ and 
when $\theta_R < \theta^L(t, \lambda_{t-})$, with $\theta^L(t, \lambda_{t-})$ given in \eqref{thetaL}, $u^*_t = u_M = 0$, i.e. the maximal coverage $\beta_M$ is optimal.  On the other hand, if $\theta_R \ge  \theta^L(t, \lambda_{t-})$, $u^*(t,\lambda)$ coincides with  $\bar u(t, \lambda)$ satisfying equation \eqref{1st_order_genHJB}, which corresponds to the solution of equation \eqref{eqn:evp_prop_stationaryLSL}.


\end{proof}
Let us briefly comment the previous result. Differently from the proportional reinsurance, null reinsurance is never optimal and we can distinguish two cases, depending on the maximum coverage $\beta_M$ and the ratio $\ds \frac{\varphi(t, \lambda_{t^-} + \ell(z)) }{\varphi(t, \lambda_{t^-})}$:

\begin{itemize}
\item  if the reinsurer's safety loading $\theta_R$ is smaller than $\theta^L(t, \lambda_{t-})$ then the maximum reinsurance coverage $\beta_M$ is optimal;
\item if  $\theta_R$ is larger than $\theta^L(t,\lambda_{t-})$  then it is optimal purchasing reinsurance but not with maximum coverage.
\end{itemize}

If the value function $\varphi(t,\lambda)$ is a strictly increasing function of $\lambda \in (0, + \infty)$ then $\theta^L(t, \lambda)>0$ for any $(t, \lambda) \in [0,T] \times (0, + \infty)$. As a consequence, maximum reinsurance may be allowed.

\begin{remark}[Cox process with shot noise intensity]\label{cx}
In the case $\ell(z)=0$ we have that $\theta^L=0$ (i.e. maximal reinsurance is never optimal) and equation \eqref{eqn:evp_prop_stationaryLSL} reduces to $(1+ \theta _R ) =
	e^{\eta e^{r(T-t)} u}.$
Thus, for any $\beta_M>0$, the optimal strategy is an increasing function on time given by
 $$
 u^{*,cox}(t) = \frac{\log(1+ \theta_R)}{\eta} e^{-r(T-t)},
 $$
 and coincides with that in the case of constant claim arrival intensity.
\end{remark}

\subsubsection{Excess-of-Loss Reinsurance}

The excess of loss contract, that is, $z - \Phi (z,u) = (z- u )^{+} $ (see Example 4.2(2)) can be easily obtained from the previous case by letting $\beta_M \rightarrow \infty$. The optimal reinsurance strategy, under Assumption \ref{ass_app_premium} $i)$, becomes then:
\begin{equation}
\label{ustar_XL}
u^*_t = u^*(t, \lambda_{t-}) =
\begin{cases}
	0  & \text{if } \theta_R < \theta^L(t, \lambda_{t-})
	\\
	\bar{u}(t,\lambda_{t-})	& \text{if } \theta_R \geq \theta^L(t, \lambda_{t-}),
\end{cases}
\end{equation}
where
\[
\theta^L(t, \lambda) = \int_0^{+\infty} \frac{\varphi(t, \lambda + \ell(z))}{\varphi(t, \lambda) }  F^{(1)} (\ud z) -1
\]
and $\bar{u}(t, \lambda) \in (0, + \infty)$ solves the following equation with respect to $u$:
\begin{equation}
\label{eqn:evp_prop_stationaryXL}
(1+ \theta _R ) (1-F^{(1)} (u) )=
	\int_u^{+\infty}  \frac{\varphi(t, \lambda + \ell(z))}{\varphi(t, \lambda) } e^{\eta e^{r (T-t) u }} F^{(1)} (\ud z).
\end{equation}
As in the Limited Excess-of-Loss with fixed reinsurance coverage case, null reinsurance is never optimal and two cases are possible, depending on 
$\frac{\varphi(t, \lambda_{t^-} + \ell(z))}{\varphi(t, \lambda_{t^-}) }$. 
\begin{itemize}
\item when $\theta_R<\theta^L(t,\lambda_{t-})$, the full reinsurance is optimal;
\item otherwise, it becomes optimal to purchase an intermediate protection level.
\end{itemize}
In the case of Cox process with shot noise intensity, the optimal reinsurance strategy is the same as the Limited Excess-of-Loss reinsurance contract given in Remark \ref{cx}.

\section{Comparison results and monotonicity of the value function}\label{sec:comparison}

\noindent In this section, we assume that reinsurance premiums are computed under the EVP  and compare the optimal strategy in the contagion model with that in the case of Cox process with shot noise intensity corresponding to $\ell(z)=0$ in equation \eqref{intensity}.
First, we focus on proportional reinsurance by giving the following result. 
\begin{proposition} [Proportional Reinsurance] \label{comparison}
 Suppose for any $t \in [0,T]$, $\varphi(t, \lambda)$ is increasing in $\lambda \in (0, + \infty)$. Then, under the EVP and proportional reinsurance, for any $t \in[0,T]$ 
 $$u^*_t = u^{*}(t, \lambda_{t-}) \leq u^{*,cox}(t).$$
 That is, in the contagion model the insurer transfers more risk to the reinsurer than in the case without the self-exciting component. 
 \end{proposition}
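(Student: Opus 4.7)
The plan is to reduce the inequality $u^*(t,\lambda)\le u^{*,cox}(t)$ to a monotonicity comparison between two increasing functions of the retention level, one depending on the value function $\varphi$ and one not. Set
\[
G(u,t,\lambda)=\int_0^{+\infty}\frac{\varphi(t,\lambda+\ell(z))}{\varphi(t,\lambda)}\,z\,e^{\eta e^{r(T-t)}zu}F^{(1)}(\ud z),\qquad G^{cox}(u,t)=\int_0^{+\infty}z\,e^{\eta e^{r(T-t)}zu}F^{(1)}(\ud z),
\]
so that the interior optimality conditions \eqref{eqn:evp_prop_stationary} and its Cox counterpart in Remark \ref{RemCox} become $G(u^*,t,\lambda)=(1+\theta_R)\mathbb E[Z^{(1)}]$ and $G^{cox}(u^{*,cox},t)=(1+\theta_R)\mathbb E[Z^{(1)}]$ respectively. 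Differentiating under the integral sign, both $G$ and $G^{cox}$ are strictly increasing in $u$. Moreover, since $\ell(z)\ge 0$ and $\varphi(t,\cdot)$ is increasing by hypothesis, we have $\varphi(t,\lambda+\ell(z))/\varphi(t,\lambda)\ge 1$, so
\[
G(u,t,\lambda)\ge G^{cox}(u,t)\qquad\forall u\in[0,1].
\]

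Next I would compare the two threshold families: from the definitions in Proposition \ref{propC} and Remark \ref{RemCox}, the same pointwise inequality $\varphi(t,\lambda+\ell(z))/\varphi(t,\lambda)\ge 1$ yields $\theta^F(t,\lambda)\ge 0$ and $\theta^N(t,\lambda)\ge \theta^N_{cox}(t)$, where $\theta^N_{cox}(t)=\frac{1}{\mathbb E[Z^{(1)}]}\int_0^\infty z e^{\eta e^{r(T-t)}z}F^{(1)}(\ud z)-1$ denotes the analogous threshold in the Cox model. These two facts are the structural ingredients for the case analysis.

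I would then split according to the location of $\theta_R$ relative to these thresholds. If $\theta_R<\theta^F(t,\lambda)$ then $u^*=0$ and the claim is trivial. If $\theta_R>\theta^N(t,\lambda)$, then in particular $\theta_R>\theta^N_{cox}(t)$, so both $u^*=1$ and $u^{*,cox}=1$ and the inequality holds with equality. In the intermediate regime $\theta^F(t,\lambda)\le\theta_R\le\theta^N(t,\lambda)$, the contagion solution $u^*$ is interior and solves $G(u^*,t,\lambda)=(1+\theta_R)\mathbb E[Z^{(1)}]$; if in addition $\theta_R\le\theta^N_{cox}(t)$ then the Cox solution is also interior and
\[
G(u^{*,cox},t,\lambda)\ge G^{cox}(u^{*,cox},t)=(1+\theta_R)\mathbb E[Z^{(1)}]=G(u^*,t,\lambda),
\]
and strict monotonicity of $G(\cdot,t,\lambda)$ gives $u^*\le u^{*,cox}$; whereas if $\theta_R>\theta^N_{cox}(t)$ then $u^{*,cox}=1\ge u^*$. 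Thus the inequality holds in every regime.

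The only delicate point is organizing the case analysis cleanly, because the contagion thresholds dominate the Cox ones but $\theta_R$ may fall in any of the three regions of the contagion partition while falling in a different region of the Cox partition. Once the monotonicity of $G$ and the domination $G\ge G^{cox}$ are in hand, each case reduces to a one-line argument. I do not expect any technical obstacle beyond verifying that $\varphi(t,\lambda)>0$ (already established in Remark \ref{positive}) so that the ratios $\varphi(t,\lambda+\ell(z))/\varphi(t,\lambda)$ and the functions $G$, $G^{cox}$ are well defined and finite under Assumption \ref{ass_app_premium}.
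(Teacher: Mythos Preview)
Your proposal is correct and rests on the same key observation as the paper: monotonicity of $\varphi(t,\cdot)$ gives $G(u,t,\lambda)\ge G^{cox}(u,t)$, and both functions are strictly increasing in $u$, so the root of the larger one lies to the left. The paper streamlines your case analysis by extending $G$ and $G^{cox}$ to all $u\in\mathbb{R}$, noting that the equations then have unique solutions $\bar u(t,\lambda)\le \bar u^{cox}(t)$ on $\mathbb{R}$, and writing $u^*(t,\lambda)=\max\{0,\min\{\bar u(t,\lambda),1\}\}$ (and similarly for the Cox case), so the ordering survives the truncation in one line without any threshold comparisons.
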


 \begin{proof}
First notice that under proportional reinsurance $\Phi(u,z) = uz$ and EVP   from Proposition \ref{eqn:evp_prop_stationary} and Remark \ref {RemCox}, we get that $\bar{u}(t, \lambda)$ and  $\bar{u}^{cox}(t)$ solve 
\begin{align}
(1 + \theta_R) \esp{Z^\a} & = \int_0^{+\infty}  \frac{\varphi(t,\lambda + \ell(z))}{\varphi(t,\lambda)} z e^{\eta e^{r (T-t) z u}} F^{(1)} (\ud z) :=g(t,\lambda, u),\\
(1 + \theta_R) \esp{Z^\a} & =  \int_0^{+\infty}  z e^{\eta e^{r (T-t)} z u}  F^{(1)} (\ud z)   :=g^{cox}(t,u),
\end{align}
respectively.  We fix $t \in [0,T]$ and $\lambda>0$ and we consider the functions  $g(t,\lambda, u)$ and 
$g^{cox}(t, u)$ defined for any $u \in \mathbb R$.
Since $\varphi(t, \lambda)$ is increasing in $\lambda>0$, we have $g(t,\lambda, u) \geq g^{cox}(t, u)$ for any $u \in \mathbb R$. 
Thus, we get that $\bar{u}(t, \lambda) \leq \bar{u}^{cox}(t)$. 
Finally, observing that $u^{*}(t, \lambda) = \max\{ 0, \min\{\bar{u}(t, \lambda), 1\}\}$ and $u^{*, cox}(t) = \max\{ 0, \min\{\bar{u}^{cox}(t), 1\}\}$ we find that for any $(t,\lambda)\in [0,T] \times (0, + \infty) $
$$u^{*}(t, \lambda) \leq u^{*, cox}(t),$$
which implies that $u^*_t = u^{*}(t, \lambda_{t-})\leq u^{*, cox}(t)$, for $t \in [0,T]$. 
 \end{proof}

\noindent We now prove a similar result for limited Excess-of-Loss with fixed maximum reinsurance coverage discussed in Subsection \ref{sec:LXL}. 
\begin{proposition} [Limited excess of loss reinsurance] \label{comparison2}
 Suppose for any $t \in [0,T]$, $\varphi(t, \lambda)$ increasing in $\lambda \in (0, + \infty)$. Then, under the EVP and limited Excess-of-Loss reinsurance, for any $t \in[0,T]$ 
 $$u^*_t = u^{*}(t, \lambda_{t-}) \leq u^{*,cox}(t). 
 $$
 That is, in the contagion model the insurer  transfers more risk to the reinsurer than in the case without the self-exciting component. 
 \end{proposition}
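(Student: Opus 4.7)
My plan is to mirror the argument of Proposition \ref{comparison}, replacing the first-order condition of the proportional case with \eqref{eqn:evp_prop_stationaryLSL} and its Cox counterpart from Remark \ref{cx}. I will introduce the function
$$G(t,\lambda,u):=e^{\eta e^{r(T-t)}u}\int_u^{u+\beta_M}\frac{\varphi(t,\lambda+\ell(z))}{\varphi(t,\lambda)}F^\a(\ud z)-(1+\theta_R)\bigl[F^\a(u+\beta_M)-F^\a(u)\bigr],$$
so that \eqref{eqn:evp_prop_stationaryLSL} reads $G(t,\lambda,\bar u(t,\lambda))=0$, while its Cox analogue
$$G^{cox}(t,u):=\bigl[e^{\eta e^{r(T-t)}u}-(1+\theta_R)\bigr]\bigl[F^\a(u+\beta_M)-F^\a(u)\bigr]$$
has the unique positive zero $\bar u^{cox}(t)=\tfrac{\log(1+\theta_R)}{\eta}e^{-r(T-t)}$ by Remark \ref{cx}.

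First, I would record that strict concavity of $\widetilde f$ in $u$ (already built into the hypotheses of Proposition \ref{prop:u*LSL}) is equivalent to the map $u\mapsto h(t,\lambda,u)$ being strictly decreasing. Since the derivation of \eqref{eqn:evp_prop_stationaryLSL} in Proposition \ref{prop:u*LSL} shows that $h(t,\lambda,u)=-\lambda\varphi(t,\lambda)\,G(t,\lambda,u)$, and both $\lambda>0$ and $\varphi(t,\lambda)>0$ (Remark \ref{positive}), this translates to the statement that $u\mapsto G(t,\lambda,u)$ is strictly increasing. This step is purely bookkeeping and I do not expect any difficulty.

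The key comparison then follows at once from the monotonicity hypothesis on $\varphi$: since $\ell(z)\ge 0$ and $\varphi(t,\cdot)$ is increasing, $\tfrac{\varphi(t,\lambda+\ell(z))}{\varphi(t,\lambda)}\ge 1$ for $F^\a$-a.e.\ $z$, which gives the pointwise bound $G(t,\lambda,u)\ge G^{cox}(t,u)$ for every $u\in[0,+\infty)$. Evaluating at $u=\bar u^{cox}(t)$ yields $G(t,\lambda,\bar u^{cox}(t))\ge G^{cox}(t,\bar u^{cox}(t))=0=G(t,\lambda,\bar u(t,\lambda))$, and the strict monotonicity of $G(t,\lambda,\cdot)$ established above forces $\bar u(t,\lambda)\le \bar u^{cox}(t)$.

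To close, I would handle the possible boundary activation using Proposition \ref{prop:u*LSL} and Remark \ref{cx}: when $\theta_R<\theta^L(t,\lambda_{t^-})$ we have $u^*_t=0\le u^{*,cox}(t)$ trivially because $u^{*,cox}(t)>0$, and otherwise $u^*_t=\bar u(t,\lambda_{t^-})\le \bar u^{cox}(t)=u^{*,cox}(t)$ by the previous paragraph. The only genuine obstacle I foresee is the first step — translating strict concavity of $\widetilde f$ into strict monotonicity of $G$ — but this is just the chain rule applied to \eqref{nn}. The underlying economic intuition is exactly as in the proportional case: at any fixed retention level the right-hand side of the FOC is strictly larger in the contagion model than in the Cox model, so its unique root must be smaller.
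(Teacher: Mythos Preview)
Your proposal is correct and follows essentially the same route as the paper: introduce the first-order-condition functions for the contagion and Cox models, use the monotonicity of $\varphi(t,\cdot)$ to obtain the pointwise inequality between them, and deduce $\bar u(t,\lambda)\le \bar u^{cox}(t)$, then handle the boundary case $u^*_t=0$ separately. The only cosmetic difference is that the paper writes the FOCs in the form $g(t,\lambda,u)=1+\theta_R$ and $g^{cox}(t,u)=e^{\eta e^{r(T-t)}u}=1+\theta_R$ and concludes via the (obvious) monotonicity of $g^{cox}$, whereas you package everything into $G$ and $G^{cox}$ and invoke the strict monotonicity of $G$ coming from the concavity of $\widetilde f$; both arguments are equivalent and equally short.
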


 \begin{proof}
Firstly, recall that under limited Excess-of-Loss  with fixed maximum reinsurance coverage, we have $\Phi(u,z) = z- (z- u )^{+} + (z- u - \beta_M)^{+}$, with $\beta_M > 0$, for every $(z,u) \in [0,+\infty) \times [0,+\infty]$.
Hence, from Proposition \ref{prop:u*LSL}  and Remark \ref{cx} 
we get that $\bar{u}(t, \lambda)$ and  $\bar{u}^{cox}(t)$ solve 
\begin{align}
(1+ \theta _R ) & = 
	\frac{e^{\eta e^{r(T-t)} u}}{F^\a(u+\beta_M)-F^\a(u)} \int_u^{u+\beta_M}  \frac{\varphi(t, \lambda + \ell(z)) }{\varphi(t, \lambda)}  F^{(1)} (\ud z) :=g(t,\lambda, u)\\
(1+ \theta _R )  & =
	\frac{e^{\eta e^{r(T-t)} u}}{F^\a(u+\beta_M)-F^\a(u)} \int_u^{u+\beta_M}  F^{(1)} (\ud z)=e^{\eta e^{r(T-t)} u}:=g^{cox}(t, u),
\end{align}
respectively. Since $\varphi(t, \lambda)$ is increasing in $\lambda>0$, we have $g(t,\lambda, u) \geq g^{cox}(t, u)$, for each $u \in [0,+\infty]$.  This implies that  $\bar{u}(t, \lambda) \leq \bar{u}^{cox}(t)$. Finally, recalling Proposition \ref{prop:u*LSL} and Remark \ref{cx}, we get $u^*_t \leq \bar{u}(t, \lambda_{t-}) \leq \bar{u}^{cox}(t)=u^{*,cox}(t)$, for any $t\in [0,T]$.
 \end{proof}
\noindent In the following, we study the monotonicity property of the value function $\varphi(t,\lambda)$ with respect to $\lambda \in (0, +\infty)$, which is required for the validity of our comparison results. 
Now, we make the assumption.
\begin{assumption}\label{A2_premium}
The insurance and reinsurance premiums are respectively of the form:  $c_t = c(t) \lambda_{t^-}$ and $q^u_t  = \lambda_{t^-} d(t,u_t)$, for each $t \in [0,T]$, where $c(t)>0$ and $d(t,u)$ are deterministic functions.
\end{assumption}

\begin{remark}
Note that, under the classical premiums described in Example \ref{ex_EVP}, Assumption \ref{A2_premium} is satisfied. 
Precisely, under proportional reinsurance, for the EVP, the VPP and the MVP, we have
\begin{align}
    c& =(1+\theta_I) \esp{Z^\a}, \quad d(u)  = (1+\theta_R) \esp{Z^\a}(1-u),\\
    c& = \esp{Z^\a} + \eta_I \esp{(Z^\a)^2},\quad d(u)  = \esp{Z^\a}(1-u) + \eta_R\esp{(Z^\a)^2}(1-u)^2,\\
    c(t)& = \esp{(1+\theta_I(t,Z^\a))Z^\a} + \esp{\eta_I(t,Z^\a)(Z^\a)^2}, \\
     d(t,u) & = \esp{(1+\theta_R(t,Z^\a))Z^\a}(1-u) + \esp{\eta_R(t,Z^\a)(Z^\a)^2}(1-u)^2,
\end{align}
respectively. 
\end{remark}
\noindent We need first a preliminary result.

\begin{lemma}\label {PROP1}
Under Assumption \ref{A2_premium} we have that 
\begin{equation}\label{direct}
\varphi(t, \lambda)
= \inf_{u\in\mathcal{U}_t}\mathbb{E}^\Q_{t,\lambda}\left[e^{\int_t^T \{ \int_0^{+ \infty} (e^{-A(s,u.) z} -1) \rho F^\b(\ud z) + \lambda_s \int_0^{+ \infty} B(s,z,u.) F^\a(\ud z) - a(s,u_s) [ \beta + (\lambda - \beta) e^{-\alpha (s-t)} ] \}\ud s } \right ],
\end{equation}
where $\Q$ is the probability measure equivalent to $\Pp$ defined in \eqref{eqn:L} and $\mathbb{E}^{\Q}_{t,x,\lambda}[\cdot]$ stands for the expectation under $\Q$ when the claim intensity process $\Lambda$ starts from $\lambda$ at time $t$. Moreover, for any $u \in \mathcal{U}$
\begin{align}
a(t,u_t) & := 1 + \eta e^{r(T-t)} ( c(t)- d(t,u_t), \quad t \in [0,T], \label{a}\\
A(t,u.) & := \int_t^T a(s,u_s) e^{-\alpha (s-t)} \ud s, \quad t \in [0,T], \label{AB}\\
B(t,z, u.) & := e^{ \eta e^{r(T-t)} \Phi(z, u_t) - A(t,u.) \ell(z)} \quad t \in [0,T],\ z>0. \label{B}
\end{align}
\end{lemma}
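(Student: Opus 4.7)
The plan is to derive \eqref{direct} via a Girsanov-type change of measure from $\Pp$ to $\Q$, followed by a systematic pathwise simplification of the resulting exponent using the mild formulation of \eqref{intensity_eq} and an application of the exponential-compensator identity for integer-valued random measures.

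First I would use $\ud\Pp/\ud\Q\vert_{\F_T}=L_T$ with $L$ as in \eqref{eqn:L} to write $\varphi(t,\lambda)=\inf_{u\in\mathcal U_t}\mathbb{E}^{\Q}_{t,\lambda}[(L_T/L_t)\,e^{\mathcal I_u}]$, where $\mathcal I_u$ is the exponent appearing in \eqref{varphi}. Under Assumption~\ref{A2_premium} we have $c_s-q^u_s=\lambda_{s^-}(c(s)-d(s,u_s))$, so that $-\eta e^{r(T-s)}(c_s-q^u_s)=\lambda_{s^-}(1-a(s,u_s))$ by \eqref{a}. Combining this with the explicit form of $L_T/L_t$, the aggregate exponent reduces to
\begin{equation}
(T-t)-\int_t^T\lambda_s\,a(s,u_s)\,\ud s+\int_t^T\!\!\int_0^{+\infty}\bigl[\ln\lambda_{s^-}+\eta e^{r(T-s)}\Phi(z,u_s)\bigr]m^\a(\ud s,\ud z).
\end{equation}

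Next I would exploit the SDE \eqref{intensity_eq} in mild form, $\lambda_s=\bar\lambda(s)+\int_t^s\int e^{-\alpha(s-v)}\ell(z)\,m^\a(\ud v,\ud z)+\int_t^s\int e^{-\alpha(s-v)}z\,m^\b(\ud v,\ud z)$ with $\bar\lambda(s)=\beta+(\lambda-\beta)e^{-\alpha(s-t)}$, and apply Fubini's theorem to obtain
\begin{equation}
\int_t^T\lambda_s a(s,u_s)\,\ud s=\int_t^T\bar\lambda(s)a(s,u_s)\,\ud s+\int_t^T\!\!\int\ell(z)A(v,u.)\,m^\a(\ud v,\ud z)+\int_t^T\!\!\int zA(v,u.)\,m^\b(\ud v,\ud z),
\end{equation}
with $A$ as in \eqref{AB}. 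Substituting back and recognising, via \eqref{B}, that $\ln\lambda_{s^-}+\eta e^{r(T-s)}\Phi(z,u_s)-\ell(z)A(s,u.)=\ln(\lambda_{s^-}B(s,z,u.))$, the exponent collapses to
\begin{equation}
(T-t)-\int_t^T\bar\lambda(s)a(s,u_s)\,\ud s+\int_t^T\!\!\int\ln(\lambda_{s^-}B)\,m^\a(\ud s,\ud z)-\int_t^T\!\!\int zA\,m^\b(\ud s,\ud z).
\end{equation}
The concluding step is the exponential-compensator identity $\exp(\int H\,\ud m)=\mathcal E(\int(e^H-1)\,\ud\widetilde m^{\Q})\exp(\int(e^H-1)\,\ud\nu^{\Q})$, applied to $m^\a$ with $H=\ln(\lambda_{s^-}B)$ and $\Q$-compensator $F^\a(\ud z)\ud s$, and to $m^\b$ with $H=-zA$ and $\Q$-compensator $\rho F^\b(\ud z)\ud s$. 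After the telescoping cancellation $(T-t)+\int(\lambda_{s^-}B-1)F^\a(\ud z)\ud s=\int\lambda_s\int B\,F^\a(\ud z)\ud s$, this yields inside the exponential the integrand of \eqref{direct}, namely $\int(e^{-Az}-1)\rho F^\b(\ud z)+\lambda_s\int B\,F^\a(\ud z)-a(s,u_s)\bar\lambda(s)$, together with Doléans-Dade factors $\mathcal E^\a,\mathcal E^\b$ that must be verified to be true $\Q$-martingales (via Assumption~\ref{ass_app_premium} and Proposition~\ref{mom}) and then absorbed by an iterated-conditioning argument exploiting the independence of $m^\a$ and $m^\b$ under $\Q$.

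The main obstacle is precisely this absorption of the Doléans-Dade martingales: they depend on $\Lambda$ through $\lambda_{s^-}B$ and $A$, while the integrand of \eqref{direct} also depends on the $\Lambda$-path, so their disappearance from the expectation is not automatic from the martingale property alone but requires a careful conditioning and change-of-measure argument, together with the exponential moment estimates furnished by Assumption~\ref{ass_app_premium} that control the interplay between the change of measure and the self-exciting structure of $\Lambda$.
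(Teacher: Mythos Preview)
Your proposal follows the same route as the paper through the change of measure to $\Q$, the algebraic reduction via $a(t,u_t)$, and the mild-formulation/Fubini argument producing $A(\cdot,u.)$. For the final step the paper invokes directly the Poisson exponential identity $\mathbb E^{\Q}\bigl[e^{\int H\,\ud m}\bigr]=\mathbb E^{\Q}\bigl[e^{\int(e^H-1)\,\ud\nu^{\Q}}\bigr]$ for predictable $H$ (Lemma~\ref{lemma:exp_predictable2}, packaged as Lemma~\ref{lemma_eq}), which applies because under $\Q$ the measures $m^\a,m^\b$ are genuine Poisson random measures with deterministic compensators $F^\a(\ud z)\,\ud s$ and $\rho F^\b(\ud z)\,\ud s$; this identity is precisely your ``absorption of $\mathcal E^\a,\mathcal E^\b$'' stated in closed form, and its proof \emph{is} the iterated-conditioning argument you allude to (approximate $H$ by predictable step functions and use the independent-increment property of the Poisson measure), so the obstacle you flag dissolves once one appeals to the identity directly rather than passing through the Dol\'eans--Dade factorisation.
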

\noindent The proof can be found in Appendix \ref{appendix:proofs}. Note that, $A(t,u.)$, and then $B(t,z, u.)$, depend on the path of $u$ over $[t,T]$.

 \noindent Next result furnishes a sufficient condition for the monotonicity property of the function $\varphi(t,\lambda)$.
\begin{proposition}
 Suppose Assumption \ref{A2_premium}  to be satisfied and for any $t \in (0,T)$ and $u \in \mathcal{U}$
\begin{equation}\label{strana}
 \int_0^{+ \infty} B(t,z, u.) F^\a(\ud z)   -  a(t,u_t) \geq 0, \quad \Pp-\mbox{a.s.},
 \end{equation}
 where $B(t,z, u.)$ and $a(t,u_t)$ are defined in \eqref{B} and  \eqref{a}, respectively. Then,  $\varphi(t, \lambda)$ is an increasing function of $\lambda \in (0, + \infty)$.
 \end{proposition}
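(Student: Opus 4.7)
The plan is to leverage the $\Q$-probabilistic representation of $\varphi(t,\lambda)$ provided by Lemma \ref{PROP1}, together with the crucial fact that under the reference measure $\Q$ the law of the driving jump measures $m^{(1)}, m^{(2)}$ is independent of the starting value $\lambda$ (they have constant $\Q$-intensities $1$ and $\rho$, respectively). This decouples the randomness from the initial condition and reduces monotonicity to a pathwise check.

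First, under $\Q$ and with $\lambda_t = \lambda$, I would write the intensity in the affine form
\begin{equation}
\lambda_s = \psi(s, \lambda) + \zeta_s, \qquad \psi(s, \lambda) := \beta + (\lambda - \beta)\, e^{-\alpha(s - t)},
\end{equation}
where
\begin{equation}
\zeta_s := \int_t^s \int_0^{+\infty} e^{-\alpha(s-u)} \ell(z)\, m^{(1)}(\ud u, \ud z) + \int_t^s \int_0^{+\infty} e^{-\alpha(s-u)} z\, m^{(2)}(\ud u, \ud z).
\end{equation}
By construction, $\zeta_s$, the admissible class $\mathcal{U}_t$ (which is generated by the jump measures and thus has the same structure under $\Pp$ and $\Q$), and the processes $A(s, u.)$, $B(s, z, u.)$, $a(s, u_s)$ appearing in \eqref{direct} depend only on the jumps of $(m^{(1)}, m^{(2)})$ on $(t, T]$ and on $u$, so their $\Q$-law is independent of $\lambda$.

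Substituting the decomposition into the exponent in \eqref{direct} and collecting, the whole $\lambda$-dependence collapses into the single term
\begin{equation}
\int_t^T \psi(s, \lambda)\, \Big[ \int_0^{+\infty} B(s, z, u.)\, F^{(1)}(\ud z) - a(s, u_s) \Big]\, \ud s,
\end{equation}
all remaining contributions being functions of the jumps and of $u$ alone. Under hypothesis \eqref{strana} the bracket is $\Q$-a.s. non-negative for each $s \in [t,T]$ and each $u \in \mathcal{U}_t$, while $\partial_\lambda \psi(s, \lambda) = e^{-\alpha(s-t)} \geq 0$. Hence, for $\lambda_1 \leq \lambda_2$, the exponent is pathwise non-decreasing in the starting value, and so is its exponential.

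Finally, taking $\Q$-expectation preserves this pathwise inequality pointwise in $u$, and then taking the infimum over $u \in \mathcal{U}_t$ preserves it as well, yielding $\varphi(t, \lambda_1) \leq \varphi(t, \lambda_2)$. The main, essentially notational, obstacle is to articulate cleanly the pathwise coupling/decoupling under $\Q$: once one shows that the Markovian starting value enters the integrand only through the deterministic map $\psi(s, \lambda)$ while the jump-driven randomness and the admissible class are $\lambda$-free, the pathwise monotonicity transports safely under expectation and then under infimum.
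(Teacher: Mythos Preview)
Your proposal is correct and follows essentially the same approach as the paper: both use the $\Q$-representation from Lemma \ref{PROP1}, decompose $\lambda_s^{t,\lambda}$ into the deterministic part $\beta+(\lambda-\beta)e^{-\alpha(s-t)}$ plus a jump-driven remainder whose $\Q$-law is $\lambda$-free (since $m^{(1)},m^{(2)}$ are Poisson under $\Q$), and then read off monotonicity in $\lambda$ from condition \eqref{strana} before passing to the infimum. Your write-up is slightly more explicit about the coupling/decoupling mechanism, but the argument is the same.
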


\begin{proof}
Let us denote by $\widetilde \varphi(t, \lambda, u)$ the expectation in \eqref{direct}, that is
\begin{equation}
\begin{split}
\widetilde \varphi(t, \lambda, u)
  & := \mathbb{E}^\Q_{t,\lambda}\left[e^{\int_t^T \{ \int_0^{+ \infty} (e^{-A(s,u.) z} -1) \rho F^\b(\ud z) + \lambda_s \int_0^{+ \infty} B(s,z,u.) F^\a(\ud z) - a(s,u_s) [ \beta + (\lambda - \beta) e^{-\alpha (s-t)} ] \}\ud s } \right ].   \end{split}
\end{equation}
Denoting by $\{ \lambda_s^{t,\lambda};\ s \in [t, T]\}$ the solution of \eqref{intensity_eq} with the initial data $(t, \lambda) \in [0,T] \times (0, + \infty)$, we have the following.
\begin{equation} 
\begin{split}
  \widetilde \varphi(t, \lambda, u)
  & = \mathbb{E}^\Q \left[e^{\int_t^T \{ \int_0^{+ \infty} (e^{-A(s,u.) z} -1) \rho F^\b(\ud z) + \lambda^{t, \lambda}_s \int_0^{+ \infty} B(s,z,u.) F^\a(\ud z) - a(s,u_s) [ \beta + (\lambda - \beta) e^{-\alpha (s-t)} ] \}\ud s } \right ].
  \end{split}
  \end{equation}
From \eqref{triangle} in Appendix \ref{appendix:proofs}, we obtain
\begin{equation}\label{vartilde}
  \widetilde \varphi(t, \lambda, u)= \mathbb{E}^\Q \left[ H(t, T, u.) e^{\lambda \int_t^T  e^{-\alpha(s-t)}\{ \int_0^{+ \infty}  B(s,z,u.) F^\a(\ud z) - a(s,u_s) \} \ud s } \right], \end{equation}
where $H(t, T, u.)$ is given by
\begin{equation}
 \begin{split}
     H(t, T, u.)
     &=  e^{\int_t^T \{ \int_0^{+ \infty} (e^{-A(s,u.) z} -1) \rho F^\b(\ud z) - a(s, u_s) \beta(1- e^{-\alpha(s-t)} ) \} \ud s } e^{\int_t^T \beta(1- e^{-\alpha(s-t)})  \int_0^{+ \infty}  B(s,z,u.) F^\a(\ud z)}  \\
     & \quad \times e^{ \int_0^{+ \infty}  B(s,z,u.) F^\a(\ud z) \{ \int_t^s \int_0^{+ \infty} e^{-\alpha (s- v)} \ell(z) m^\a(\ud v , \ud z) + \int_t^s \int_0^{+ \infty} e^{-\alpha (s-v)} z m^\b(\ud v , \ud z)\} }
     \end{split}
     \end{equation} 
   and it does not depend on $\lambda$ because under $\Q$, $m^\a(\ud v , \ud z)$ and $m^\b(\ud v , \ud z)$ are Poisson random measures with deterministic compensators $F^\a(\ud z) \ud v$ and $\rho F^\b(\ud z) \ud v$, respectively.  
Hence, by \eqref{vartilde} we have that for  any $0<\lambda_1 < \lambda_2$ and  $u \in \mathcal{U}_t$, 
$\widetilde \varphi(t, \lambda_1, u) \leq \widetilde \varphi(t, \lambda_2, u).$
Finally, taking the infimum over $\mathcal{U}_t$ we obtain the thesis. i.e. 
$$ \varphi(t, \lambda_1)=\inf_{u \in \mathcal{U}_t} \widetilde \varphi(t, \lambda_1, u) \leq \inf_{u \in \mathcal{U}_t}\widetilde \varphi(t, \lambda_2,  u)= \varphi(t, \lambda_2).
$$
\end{proof}

\begin{remark}
Let us observe that condition \eqref{strana} involve both the insurance and reinsurance premium, the model's parameters, as well as the jump size $\ell(z)$ and the claim size distribution $F^\a$. 
From Propositions \ref{comparison} and \ref{comparison2} we have that, under the EVP for both, proportional reinsurance and limited-excess of loss, if condition \eqref{strana} is satisfied the insurer transfers more risk to reinsurer in the contagion model than in the Cox one.
This means that Insurance Company is not always conservative. A similar behaviour is observed in \citet{Cao_Landriault_Li} under the EVP and a Mean-Variance criterion.
 \end{remark}







\subsection*{Acknowledgements}

The authors are members of Gruppo Nazionale per l'Analisi Matematica, la Probabilità e le loro Applicazioni (GNAMPA) of Istituto Nazionale di Alta Matematica (INdAM). 


\subsection*{Funding}

The first author was partially supported by European Union-Next Generation EU - PRIN research project n. 2022BEMMLZ. 
The second author was partially supported by  the European Union-Next Generation EU - PRIN research project n. 2022FPLY97.

\bibliographystyle{plainnat}
\bibliography{biblioCC}

\appendices

\section{Technical proofs and auxiliary results} \label{appendix:proofs}

\noindent First, we provide the proof of the Verification Theorem.

\begin{proof}[Proof of Theorem \ref{Verifica}]
First, observe that if $\widetilde \varphi (t,\lambda) \in C^1((0,T) \times (0,+\infty)) \cap C([0,T] \times(0,+\infty))$ solves \eqref{eq:HJB2} - \eqref{eq:final_cond} then $\widetilde v(t,x,\lambda) = e^{-\eta x e^{r(T-t)}} \widetilde\varphi(t,\lambda)$ solves the HJB-equation  \eqref{HJB1}.
From It\^o's formula  we get that, for any $0\leq t\leq T$  and $u \in \mathcal{U}$, it holds
\begin{equation}\label{eq:function_f}
    \widetilde v(T,X^u_T,\lambda_T)  = \widetilde v(t, X^u_t, \lambda_t) + \int_t^T{\Ll}^{X,\lambda,u} \widetilde v(s,X^u_s, \lambda_s) \ud s + M_T - M_t,
\end{equation}
where  
\begin{align}
M_t = & \int_0^t \int_{\R^+} 
\left[\widetilde v(s, X^u_{s^-}-z u_s,\lambda_{s^-} + \ell(z))-\widetilde v(s,X^u_{s^-},\lambda_{s^-})\right] (m^\a(\ud s, \ud z) - \lambda_{s^-} F_Z(\ud z) \ud s)\\
& \quad + \int_0^t\int_0^{+ \infty} \left[ \widetilde v(s,X^u_{s^-}, \lambda_{s^-} + z) - \widetilde v(s,X^u_{s^-},\lambda_{s^-}) \right] (m^\b(\ud s, \ud z) -\rho F^\b (\ud z) \ud s),
\end{align}
for $t \in [0,T]$.
We introduce the non-decreasing sequence of stopping times defined as 
\begin{equation}
\tau_n = \inf\left\{t\in [0,T] : |X^u_t| > n \vee \lambda_t> n \vee \lambda_t < \frac{1}{n}\right\}. 
\end{equation}
Since both $X^u$ and $\Lambda$ do not explode and $\Lambda$ is strictly positive, we have $n\to \infty$, $\tau_n\to T$.
By assumption, $\widetilde v(t,x,\lambda)$ is continuous and hence bounded in compact sets. Therefore, the stopped process $\{M_{t\wedge \tau_n},\ t \in [0,T]\}$ is an $(\bF, \Pp)$-martingale.
Indeed, for every $n \in \bN$, denoting $R_n = [0,T]\times [-n,n] \times [\frac{1}{n},n]$, the following conditions are satisfied:
\begin{align}
& \esp{\int_0^{\tau_n} \int_{\R^+} \left|\widetilde v(s,X^{u}_{s^-} -z u_s, \lambda_{s^-} + \ell(z)) - \widetilde v(s,X^u_{s^-}, \lambda_{s^-}) \right | \lambda_s F^\a_Z(\ud z)  \ud s} \\
& \qquad \qquad \leq \sup_{(t,x,\lambda)\in R_n} 2|\widetilde v(t,x, \lambda) | n T < + \infty\\
&\mathbb{E}\left[\int_0^{\tau_n} \int_{\R^+} \left|\widetilde v(s,X^u_{s^-}, \lambda_{s^-} + z) - \widetilde v(s,X^u_{s^-}, \lambda_{s^-}) \right | \rho F^\b(\ud z) \ud s\right]\\
& \qquad \qquad \leq \sup_{(t,x,\lambda)\in R_n} 2 |\widetilde v(t,x, \lambda)| \rho T < +\infty,
\end{align}
which guarantee that $\left\{M_{\tau_n \wedge t},\ t \in [0,T]\right\}$ is an $(\bF, \Pp)$-martingale.  

From \eqref{HJB1} it holds that for any $u \in \mathcal{U}$, and $s \in [t,T]$,  ${\Ll}^{X,\lambda,u} \widetilde v(s,X^u_s, \lambda_s) \geq 0$, $\Pp$-a.s.. Therefore,  taking the conditional expectation of both sides of \eqref{eq:function_f} with $T$ replaced by $T\wedge \tau_n$ and $t$ by $t\wedge \tau_n$, we get that
\begin{equation}\label{EXP}
    \mathbb{E}_{t,x, \lambda}\bigl[ \widetilde v(T\wedge \tau_n,X^u_{T\wedge \tau_n},\lambda_{T\wedge \tau_n}) \bigr] \geq \mathbb{E}_{t,x, \lambda}\bigl[ \widetilde v(t\wedge \tau_n, X^u_{t\wedge \tau_n},\lambda_{t\wedge \tau_n}) \bigr],
    \end{equation}
where $(t, x, \lambda) \in [0,T]\times \mathbb R \times (0, + \infty)$.
    Letting $n \rightarrow + \infty$, and using the fact that the process $X^{u}$ and $\Lambda$ do not have any deterministic jump time it holds that 
    $$\widetilde v(T\wedge \tau_n,X^u_{T\wedge \tau_n},\lambda_{T\wedge \tau_n}) \rightarrow e^{-\eta X^{u}_T}, \ \Pp-\mbox{a.s.},$$ and 
    $$\widetilde v(t\wedge \tau_n, X^u_{t\wedge \tau_n},\lambda_{t\wedge \tau_n})\rightarrow \widetilde v(t, X^u_{t},\lambda_{t}), \ \Pp-\mbox{a.s.},$$
    for 
    any $u \in \mathcal{U}$.   
    From assumption (ii) we can now apply the limit under expectation in  \eqref{EXP}, thus 
    \begin{equation}\label{ver}\mathbb{E}_{t,x, \lambda}\bigl[e^{-\eta X^{u}_T}\bigr] \geq  \widetilde v(t,x,\lambda),\end{equation}
    which implies $v(t,x, \lambda) \geq \widetilde  v(t, x, \lambda)$.  By the continuity of $\Phi(z,u)$ and $q(\lambda,u)$ with respect to $u \in U$ and since $U$ is compact, there exists a measurable function $u^*(t,\lambda)$ which realizes the infimum of \eqref{Psiu}. The control $\{u^*(t,\lambda_{t^-}), \, t \in [0,T]\} \in \mathcal{U}$ is admissible from Proposition \ref{ADM} (b). Finally, by computations similar to those above, we can prove that equality holds in \eqref{ver} when taking the control $\{u^*(t,\lambda_{t^-}), \, t \in [0,T]\}$.
    Consequently, 
   $$v(t,x,\lambda) = \mathbb{E}_{t,x,\lambda}\bigl[e^{-\eta X^{u^*}_T}\bigr] = \widetilde v(t, x, \lambda),$$
   which concludes the proof. 
   \end{proof}

\begin{proof}[Proof of Theorem \ref{T2}]
In order to apply \citet[Theorem 3.5]{Papa_Possa_Sapla2018} we start by rewriting the BSDE in terms of the integer-valued random measure  $m(\ud t, \ud z_1, \ud z_2)$ defined in \eqref{m},
\begin{equation} \label{bsde1}
\begin{split}
Y_t  = \xi - \int_t^T \int_0^{+ \infty} \int_0^{+ \infty}\Theta^{Y}_{s^-} (z_1,z_2) \widetilde m(\ud s, \ud z_1, \ud z_2) - \int_t^T \esssup_{u \in \mathcal U} \widetilde F( s, Y_{s^-} , \Theta^{Y}_s(\cdot, \cdot),u_s)\,\ud s 
 \end{split}
\end{equation}
 where
 $\widetilde F( t, Y_{t^-} , \Theta^{Y}_t(\cdot, \cdot),u_t) = \widetilde{f} ( t, Y_{t^-} , \Theta^{Y}_t(\cdot, 0 ) , u_t )$
and  $\widetilde{f} ( t, Y_{t^-} , \Theta^{Y}_t(\cdot, 0 ) , u_t )$ is given in \eqref{ftilde}.

\noindent Note that BSDE \eqref{bsde1} is a special case of that considered in  \citet{Papa_Possa_Sapla2018}.  Precisely, it is only driven by an integer-valued random measure with $\bF$-predictable compensator absolutely continuous with respect to the Lebesgue measure and the orthogonal martingale term is not present thanks to the $(\bF,\Pp)$-martingale representation property in  Proposition \ref{mg representation}.

\noindent We now verify that assumptions $\mathbf{(F1)}$-$\mathbf{(F5)}$ in  \citet[Theorem 3.5]{Papa_Possa_Sapla2018} are satisfied.  

\noindent $\mathbf{(F1)}$  The process $\widetilde Z= \{\widetilde Z_t = (\widetilde C^\a_t, \widetilde C^\b_t), \ t\in [0,T]\}$ where
$$ \widetilde C^{(i)}_t = \int_0^t \int_0^{+ \infty} z_i \widetilde m^{(i)}(\ud s, \ud z_i), \quad i =1,2$$ 
is a two-dimensional pure jump $(\bF, \Pp)$-martingale (see Proposition \ref{mom}) such that  
$\sup_{t \in [0,T]}\mathbb{E}\left[\|Z\| ^2\right] < +\infty$. Hence \citet[Assumption 2.10]{Papa_Possa_Sapla2018} is satisfied.

\noindent Indeed, for any $t \in [0,T]$
\begin{equation} \begin{split}
\mathbb{E}\left[ (\widetilde C^\a_t)^2  \right] =&
 \mathbb{E}\left[ \int_0^t \int_0^{+\infty} z^2 \lambda_{s^-} F^\a(\ud z) \ud s \right] = \mathbb{E}\left[(Z^\a)^2\right] \mathbb{E} \left[ \int_0^T \lambda_s \ud s \right]  \\
\mathbb{E}\left[ (\widetilde C^\b_t)^2  \right] =&
 \mathbb{E}\left[ \int_0^t \int_0^{+\infty} z^2   \rho F^\b(\ud z) \ud s \right] = \mathbb{E}\left[(Z^\b)^2\right] \rho T,
\end{split}
\end{equation}
thus
\begin{equation} \begin{split}
\sup_{t \in [0,T]}\mathbb{E}\left[ \| \widetilde Z_t \|^2\  \right] &= 
 \sup_{t \in [0,T]}\left( \mathbb{E}\left[ (\widetilde C^\a_t)^2  \right]  + \mathbb{E}\left[ (\widetilde C^\b_t)^2  \right] \right)\\
& \le \mathbb{E}\left[(Z^\a)^2\right] \mathbb{E} \left[ \int_0^T \lambda_s \ud s \right]  + \mathbb{E}\left[(Z^\b)^2\right] \rho T,
\end{split}
\end{equation}
which is finite thanks to Proposition \ref{mom}.

\noindent The disintegration property is fulfilled because 
$\nu(\omega, \ud t, \ud z_1, \ud z_2) = K^{\omega} (\ud z_1, \ud z_2) \ud t,
$
where the transition kernel
$K^{\omega}$ on $(\Omega \times [0,T], {\mathcal P})$ (here ${\mathcal P}$ denotes the $\bF$-predictable sigma-algebra on $\Omega \times [0,T]$) is given by 
$$
K^{\omega} (\ud z_1, \ud z_2)= \lambda_{t^-}(\omega) F^\a(\ud z_1) \delta_{0}(\ud z_2) + \rho F^\b(\ud z_2) \delta_{0}(\ud z_1).
$$

\noindent $\mathbf{(F2)}$ The terminal condition of the BSDE $\xi = e^{- \eta X^N_T}$ has finite moments for any order. Indeed, since $\Phi(z,u_N)=z$ and $q_t^{u_N}\equiv 0$ for every $t \in [0,T]$, we have
\begin{align}
 \esp{ \xi^p} & = \esp{ e^{-p\eta X^N_T}}\\
 &=\esp{e^{-p\eta R_0e^{rT}}e^{-p\eta\int_0^T e^{r(T-s)} c_s \,\ud s}
e^{p\eta\int_0^T\int_0^{+\infty} e^{r(T-s)} z  \,m^{(1)}(\ud s,\ud z)}}\\
& \leq \esp{
e^{p\eta e^{r T}\int_0^T\int_0^{+\infty}  z  \,m^{(1)}(\ud s,\ud z)}}= \esp{e^{p\eta e^{r T}C_T}} < +\infty,\label{momenti}
\end{align}
for every $p >0$, in view of Proposition \ref{ADM} (a). See also $\mathbf{(F4)}$ below for additional details.

\noindent $\mathbf{(F3)}$ The generator $F(t, \omega, y, \theta  ( \cdot, \cdot )):= \esssup_{u \in U} \widetilde F ( t, \omega, y, \theta  ( \cdot, \cdot )), u )$ defined on the space $$\mathbb{M} = \{ (t,\omega,y,\theta(\cdot,\cdot): (t,\omega,y)\in[0,T]\times\Omega\times (0,+\infty) \text{ and } \theta(\cdot,\cdot): [0,+\infty)^2\to\mathbb{R},\ \textrm{measurable} \}$$
satisfies a stochastic Lipschitz condition, i.e., there exist two positive $\mathbb{F}$-predictable processes $\gamma, \bar{\gamma}$ such that 
\begin{equation}\label{eqn;stochlip}
\big| F(t, \omega, y, \theta  ( \cdot, \cdot ) ) - F(t, \omega, y', \theta' (\cdot, \cdot ) ) \big| ^2
	\leq \gamma_t (\omega ) |y - y'|^2
	+\bar{\gamma}_t (\omega ) \left( ||| \theta ( \cdot, \cdot ) - \theta' ( \cdot, \cdot ) |||_t (\omega ) \right)^2 ,
\end{equation}
where:
\begin{equation}
\begin{split}
\left( ||| \theta ( \cdot, \cdot ) |||_t (\omega ) \right)^2 := & \int_0^{+\infty} \int_0^{+\infty} \theta^2(z_1,z_2) K_t^{\omega}(\ud z_1, \ud z_2) \\  = & \int_0^{+\infty} \theta^2(z_1,0) \lambda_{t^-}(\omega) F^\a(\ud z_1) + 
\int_0^{+\infty} \theta^2(0,z_2) \rho F^\b(\ud z_2).
\end{split}
\end{equation}
Exploiting the definition of $F$, we first need to deal with the essential supremum:
\[
\bigl| F (t, \omega, y, \theta  ( \cdot, \cdot ) ) - F(t, \omega, y', \theta' (\cdot , \cdot)) \bigr| ^2
	\leq \left( \esssup_{u \in U} \bigl| \widetilde F ( t, \omega, y, \theta  ( \cdot, \cdot ), u ) -  \widetilde F ( t, \omega, y', \theta'  ( \cdot, \cdot ), u ) \bigr| \right)^2,
\]
and we preliminarily work on the absolute value difference involving $\widetilde{F}$:
\[
\begin{split}
&\bigl| \widetilde{F} ( t, \omega, y, \theta  ( \cdot, \cdot ), u ) -  \widetilde{F} ( t, \omega, y', \theta'  ( \cdot, \cdot ), u ) \bigr| = \bigg| (y- y') \eta e^{r (T-t)} q^u_t(\omega) \\
& \quad + \int_{0}^{+\infty} \left( y- y' + \theta(z_1, 0) - \theta'(z_1, 0)\right) \left( e^{-\eta e^{r(T-t)}(z-\Phi(z,u))} - 1 \right) \lambda_{t^-} F^\a (\ud z_1) \bigg| \\
	& \quad \le \bigl| y- y' \bigr| \eta e^{r (T-t)} q^{u_M}_t (\omega)
	+  \left| y- y' \right| \lambda_{t^-}
	+  \int_{0}^{+\infty} \left| \theta(z_1, 0) - \theta'(z_1,0) \right| \lambda_{t^-} F^\a (\ud z_1)\\
\end{split}
\]
since $| e^{-\eta e^{R(T-t)}(z-\Phi(z,u))} - 1 |\leq 1$ and $q_t^u \leq q^{u_M}_t$ for any $u \in U$. Now, since the inequality above does not depend on $u$ we also have that the $\esssup_{u \in U}$ satisfies it and we can take its square  (we use here the trivial relation $ (a+ b+c)^2 \leq 3 (a^2 + b^2 + c^2)$), finding:
\[
\begin{split}
& \left( \esssup_{u \in U} \bigl| \widetilde{F} ( t, \omega, y, \theta  ( \cdot, \cdot ), u ) -  \widetilde{F} ( t, \omega, y', \theta'  ( \cdot, \cdot ), u ) \bigr| \right)^2 \le 3 \bigl| y- y' \bigr|^2 \eta^2 e^{2r (T-t)} (q^{u_M}_t (\omega))^2
	\\
	& \quad + 3  \left| y- y' \right|^2   \lambda^2_{t^-} + 3 \left( \int_{0}^{+\infty} \left| \theta(z_1, 0) - \theta'(z_1, 0) \right| \lambda_{t^-} F^\a (\ud z_1) \right)^2. \\
\end{split}
\]
We now  apply Jensen inequality to the last term and we get
\[
\begin{split}
& \left( \esssup_{u \in U} \bigl| \widetilde{F} ( t, \omega, y, \theta  ( \cdot, \cdot ), u ) -  \widetilde{F} ( t, \omega, y', \theta'  ( \cdot, \cdot ), u ) \bigr| \right)^2 \\
	&\le 3\bigl| y- y' \bigr|^2 \eta^2 e^{2r (T-t)} (q^{u_M}_t (\omega))^2
	+ 3 \left| y- y' \right|^2 \lambda^2_{t^-}(\omega)+ 3 \int_{0}^{+\infty} \left| \theta(z_1, 0) - \theta'(z_1, 0) \right|^2 \lambda^2_{t^-} (\omega)   F^\a(\ud z_1)\\
	&= 3\bigl| y- y' \bigr|^2 \left( \eta^2 e^{2r (T-t)} \left(q^{u_M}_t (\omega) \right)^2 +  \lambda^2_{t^-}(\omega) \right)
	+ 3 \lambda_{t^-} (\omega)\left( ||| \theta ( \cdot, \cdot ) - \theta' ( \cdot, \cdot ) |||_t (\omega ) \right)^2 .
\end{split}
\]
So, the target, being Equation \eqref{eqn;stochlip}, is reached and we have the following values for the stochastic Lipschitz coefficients $\gamma_t$ and $\bar \gamma_t$:
\begin{align*}
\gamma_t &= 3 \eta^2 e^{2r (T-t)} ( q^{u_M}_t )^2 + 3 \lambda_{t^-}^2 , \quad \bar{\gamma}_t = 3 \lambda_{t^-},
\end{align*}
which, as expected, are independent of the control $u$.
\item[$\mathbf{(F4)}$] Since by definition $ \alpha _{\cdot} ^2 = \max \{ \sqrt{\gamma_\cdot}, \bar \gamma_{\cdot} \}$, here we find:
$$
\alpha_s^2 =  \max \left\{ \sqrt{3 \eta^2 e^{2r (T-s)} ( q^{u_M}_s )^2 + 3 \lambda_{s^-}^2 }, 3\lambda_{s^-}\right\}
$$
and also $A_t = \int_{0}^{t} \alpha _{s}^2 \,\ud s $, so that we can easily verify that the inequality $\Delta A_t \leq \Phi, \Pp-$a.s. holds true for any $\Phi >0$ since $A$ has no jumps.
Notice that $\mathbf{(F2)}$ requires that the terminal condition $\xi = e^{- \eta X^T_N}$ belongs to the set of  $\mathcal F_T-$measurable random variables such that $\mathbb E \left[ e^{\widehat \beta A_T} \xi^2 \right] < \infty$, for some $\widehat \beta > 0$.
This is true for any $\widehat \beta >0$, since $\alpha_s^2 \le \sqrt 3 \eta e^{r (T-s)}  q^{u_M}_s  + 3 \lambda_{s^-} $ and so
\begin{align}
\mathbb E \left[ e^{\widehat \beta A_T} \xi^2 \right] &\le \frac{1}{2}\mathbb E \left[ e^{ 2 \widehat \beta A_T}\right] + \frac{1}{2}\mathbb E \left[\xi^4 \right] \\
&= \frac{1}{2} \mathbb E \left[ e^{ 2\widehat \beta \sqrt 3 \eta  \int_{0}^{T}  e^{r (T-s)}  q^{u_M}_s \ud s} e^{ 6 \widehat \beta \int_0^T  \lambda_{s}\ud s }\right] +   \frac{1}{2}\mathbb E \left[\xi^4 \right]\\
&\le \frac{1}{4}\mathbb E \left[ e^{ 4\widehat \beta \sqrt 3 \eta  \int_{0}^{T}  e^{r (T-s)}  q^{u_M}_s \ud s} \right] +  \frac{1}{4} \mathbb E \left[ e^{ 12 \widehat \beta \int_0^T  \lambda_{s}\ud s }\right] +  \frac{1}{2}\mathbb E \left[\xi^4 \right].  
\end{align}
Thus by 
Assumption \ref{ass_app_premium} $(ii)$, equation \eqref{momenti} and Proposition \ref{ADM} (a), we get that for any $\widehat \beta >0$, $\mathbb E \left[ e^{\widehat \beta A_T} \xi^2 \right] < +\infty$.
\item[$\mathbf{(F5)}$] Finally, by using the same $\widehat\beta >0$ and $A$ introduced to prove $\mathbf{(F4)}$, we find:
\begin{equation}
\mathbb{E} \left[  \int_0^T e^{\widehat\beta A_t} \frac{|F (t, 0, 0, 0) |^2}{\alpha_t^2}  dt \right] < \infty,
\end{equation}
since here $F (t,0,0) = - \esssup_{u \in \mathcal{U}} \widetilde{F} (t, 0, 0, u_t) = 0$.

It now remains to prove that the quantity
$$
M^{\Phi}(\widehat \beta) = \frac{9}{\widehat\beta} + \frac{\Phi^2 (2 + 9 \widehat \beta)}{\sqrt{\widehat \beta^2 \Phi^2 +4} -2} \exp{\left( \frac{\widehat \beta \Phi + 2 -\sqrt{\widehat \beta^2 \Phi^2 +4}}{2} \right)}
$$
with $\Phi>0$ introduced in $\mathbf{(F4)}$ and $\widehat \beta >0$, satisfies $M^{\Phi}(\widehat \beta) < \frac12$.
Thanks to \citet[Lemma 3.4]{Papa_Possa_Sapla2018}, for $\widehat \beta$ sufficiently large, we know that since $\lim_{\widehat \beta \rightarrow \infty} M^\Phi(\widehat \beta) = 9 e \Phi$ then it suffices to take $\Phi < \frac{1}{18 e}$.
According to \citet[Theorem 3.5]{Papa_Possa_Sapla2018} there exists a unique solution  $(Y,\Theta^{Y})$  to BSDE \eqref{bsde1} such that for any  $\widehat\beta>0$
\[
\mathbb{E}\left[ \int_0^T e^{\widehat\beta A_t} \alpha_t^2 \abs{Y_t}^2 \,dt\right] <+\infty, \quad \mathbb{E}\left[ \int_0^T \int_0^t \int_0^{+\infty} e^{\widehat\beta A_t}  \abs{\Theta_s^Y(z_1,z_2)}^2 \nu ( \ud s, \ud z_1, \ud z_2) \right] <+\infty.
\]
We notice that $\alpha^2_t \ge 3 \lambda_{t^-} \ge 3 \min\{ \lambda_0, \beta \}$ and this implies $ \mathbb{E}\left[ \int_0^T e^{\widehat\beta A_t} \abs{Y_t}^2 \,\ud t\right] <+\infty$ and therefore $Y \in \mathcal{L}^2$. 
By recalling the structure of $\nu ( \ud s, \ud z_1, \ud z_2)$ we finally obtain that $\Theta^{Y}(\cdot, 0)\in\widehat{\mathcal{L}}^\a$
and $\Theta^{Y}(0, \cdot)\in\widehat{\mathcal{L}}^\b$ and this concludes the proof.



 \end{proof}

\begin{proof}[Proof of Lemma \ref{PROP1}]
We perform a computation of the expectation in \eqref{varphi}  by applying the change of probability measure from $\Pp$ to $\Q$ defined by \eqref{eqn:L}. Under Assumption \ref{A2_premium} we have
\begin{equation}\label{CN1}
\begin{split}
&\mathbb E_{t,\lambda} \left[  e^{-\eta\int_t^T e^{r(T-s)} \left( c(s,\lambda_s)-q(s,\lambda_s,u_s) \right) \,\ud s 
+\eta\int_t^T\int_0^{+\infty} e^{r(T-s)} \Phi(z, u_s)  \,m^{(1)}(\ud s,\ud z)}\right]  = \\
& \mathbb E_{t,\lambda}^{\Q} \left[  e^{ -\int_t^T (\lambda_s - 1) ds  + \int_t^T  \ln (\lambda_{s^-}) d N_s^{(1)} }  e^{-\eta\int_t^T e^{r(T-s)} \lambda_s \left( c(s) -d(s,u_s) \right) \,\ud s 
+\eta\int_t^T\int_0^{+\infty} e^{r(T-s)} \Phi(z, u_s)  \,m^{(1)}(\ud s,\ud z)}\right] 
\\
& =e^{T-t} \, \mathbb E_{t,\lambda}^{\Q} \left[  e^{ -\int_t^T \lambda_s  a(s,u_s) \ud s 
+ \int_t^T   \int_0^{+\infty} [ \ln (\lambda_{s^-}) + \eta e^{r(T-s)} \Phi(z, u_s) ] \,m^{(1)}(\ud s,\ud z)}\right] .
\end{split}
\end{equation}
Let us denote by $\{ \lambda_s^{t,\lambda};\ s \in [t, T]\}$ the solution of Equation \eqref{intensity_eq} with initial data $(t, \lambda) \in [0,T] \times (0, + \infty)$. By standard computations we have for any $s \in [t,T]$
\begin{equation}\label{triangle}
\lambda^{t,\lambda}_s = \beta + (\lambda - \beta)  e^{-\alpha (s-t)} + \int_t^s \int_0^{+ \infty} e^{-\alpha (s- v)} \ell(z) m^\a(\ud v , \ud z) + \int_t^s \int_0^{+ \infty} e^{-\alpha (s-v)} z m^\b(\ud v , \ud z).
\end{equation}
As a consequence we get that  
\begin{equation}  
\begin{split}\label{aL1}
&\int_t^T a(s,u_s) \lambda^{t,\lambda}_s \ud s  = \int_t^T a(s,u_s) ( \beta + (\lambda - \beta) e^{-\alpha (s-t)} )\ud s  \\
& +\int_t^T \int_t^s \int_0^{+ \infty} a(s,u_s) e^{-\alpha (s-v)}  \ell(z) m^\a(\ud v , \ud z) \ud s  
+\int_t^T \int_t^s \int_0^{+ \infty} a(s,u_s) e^{-\alpha (s-v)} z m^\b(\ud v , \ud z) \ud s.  
\end{split}
\end{equation}
By applying Fubini's Theorem, we can write
\begin{align*}
& \int_t^T \int_t^s \int_0^{+ \infty} a(s,u_s) e^{-\alpha (s-v)}  \ell(z) m^\a(\ud v , \ud z) \ud s  +\int_t^T \int_t^s \int_0^{+ \infty} a(s,u_s) e^{-\alpha (s-v)} z m^\b(\ud v , \ud z) \ud s \\
&=\int_t^T  \int_0^{+ \infty} \left \{\int_v^T a(s,u_s) e^{-\alpha (s-v)} \ud s \right \} \ell(z) m^\a(\ud v , \ud z)  \\
& \qquad + 
\int_t^T \int_0^{+ \infty} \left\{\int_v^T a(s,u_s) e^{-\alpha (s-v)} \ud s \right \} z m^\b(\ud v , \ud z).
\end{align*} 
Hence, \eqref{aL1} reads as
\begin{equation}\label{int1}
\begin{split}
\int_t^T a(s,u_s) \lambda^{t,\lambda} _s \ud s & =  \int_t ^T a(s,u_s) ( \beta + (\lambda - \beta) e^{-\alpha (s-t)} )\ud s  \\
& +\int_t^T  \int_0^{+ \infty} A(v,u.)  \ell(z) m^\a(\ud v , \ud z) + \int_t^T  \int_0^{+ \infty} A(v,u.) z m^\b(\ud v , \ud z).
\end{split}
\end{equation}
Plugging \eqref{int1} into the last expectation in Equation \eqref{CN1} we obtain that
 \begin{align}
\varphi(t, \lambda)& = \inf_{u\in\mathcal{U}_t} e^{T-t} \, \mathbb E_{t, \lambda}^{\Q} \bigg[  e^{- \int_t^T\! a(s,u_s) ( \beta + (\lambda - \beta) e^{-\alpha (s-t)} )\ud s } \times \\ 
& e^{ -  \int_t^T \! \int_0^{+ \infty}\! A(s,u.) z m^\b(\ud s , \ud z) }  \,  e^{\int_t^T\!\int_0^{+\infty}\! \left( \ln (\lambda_{s^-})  + \eta e^{r(T-s)} \Phi(z, u_s) - A(s,u.) \ell(z)\right)  \,m^{(1)}(\ud s,\ud z)} \bigg].
\end{align} 
Finally, from Lemma \ref{lemma_eq} below, we get the thesis.
\end{proof}
\noindent Before proving Lemma \ref{lemma_eq}, we  recall the following results proved in \citet{brachetta_call_ceci_sgarra}.
 \begin{lemma}\label{lemma:exp_predictable2}
Let $(\Omega,\F, \Pp;\bF)$ be a filtered probability space and assume that the filtration $\bF=\{\F_t, \ t \in [0,T]\}$ satisfies the usual hypotheses. Let $N(\ud t,\ud z)$ be a Poisson random measure on $[0,T]\times[0,+\infty)$ with $\bF$-intensity kernel $\lambda F(\ud z)\,\ud t$. Then, for any $\bF$-predictable and $[0,+\infty)$-indexed process $\{H(t,z),\ t \in [0;T],\ z \in [0,+\infty) \}$ we have that
$$
\mathbb E \left[ e^{\int_0^T\int_0^{+\infty} H(t,z)\,N(\ud t,\ud z) }\right]
	= \mathbb E \left[ e^{\int_0^T\int_0^{+\infty} ( e^{H(t,z)} -1) \lambda F(\ud z)\,\ud t }\right] ,
$$
provided that the last expectation is finite.

\end{lemma}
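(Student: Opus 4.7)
The plan is to prove the identity via the Doleans-Dade stochastic exponential associated with the compensated Poisson random measure. Set
$$\mathcal{E}_t := \exp\left(\int_0^t \int_0^{+\infty} H(s,z)\, N(\ud s, \ud z) - \int_0^t \int_0^{+\infty} (e^{H(s,z)} - 1)\, \lambda F(\ud z)\, \ud s\right).$$
By the jump version of It\^o's formula applied to $f(x) = e^x$, one verifies that
$$\mathcal{E}_t = 1 + \int_0^t \int_0^{+\infty} \mathcal{E}_{s^-}\bigl(e^{H(s,z)} - 1\bigr)\, \widetilde{N}(\ud s, \ud z),$$
where $\widetilde{N}(\ud s, \ud z) := N(\ud s, \ud z) - \lambda F(\ud z)\, \ud s$ denotes the compensated random measure. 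Since the integrand is $\bF$-predictable, $\mathcal{E}$ is a non-negative local $\Pp$-martingale on $[0,T]$, hence a supermartingale with $\mathbb{E}[\mathcal{E}_T] \le 1$.

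The second step is to upgrade $\mathcal{E}$ from a local to a true martingale. A localization by a sequence of stopping times $\tau_n \uparrow T$ yields $\mathbb{E}[\mathcal{E}_{T \wedge \tau_n}] = 1$ for each $n$; I would then use the pathwise factorization
$$\exp\left(\int_0^{t} \int H\, \ud N\right) = \mathcal{E}_{t} \cdot \exp\left(\int_0^{t} \int (e^H - 1)\lambda F\, \ud s\right)$$
together with the hypothesis $\mathbb{E}\bigl[\exp(\int_0^T \int (e^{H} - 1)\lambda F\, \ud s)\bigr] < +\infty$, which plays the role of a Novikov-type criterion for jump processes, to produce the integrable dominating function required for dominated convergence and conclude that $\mathbb{E}[\mathcal{E}_T] = 1$.

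The final step is to transfer the martingale identity at time $T$ into the stated equality. When $H$ is a deterministic measurable function of $(t,z)$, the second factor in the factorization above is non-random and pulls out of the expectation, recovering the classical Campbell formula directly; the general $\bF$-predictable case is then obtained via a monotone-class approximation starting from elementary predictable integrands of the form $H(s,z) = \sum_i c_i \I_{(s_{i-1}, s_i]}(s) \I_{B_i}(z)$ with $c_i$ measurable with respect to $\F_{s_{i-1}}$, for which both sides can be computed by successive conditioning on the Poisson increments (exploiting the independent-increments property of $N$). I expect the main obstacle to be step~2: establishing the true-martingale property of $\mathcal{E}$ on $[0,T]$ from only the exponential integrability of the compensator is more delicate than the continuous-semimartingale Novikov criterion, and requires a careful choice of localization so that the uniform-integrability estimate closes using precisely the hypothesis of the lemma.
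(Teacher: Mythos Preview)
The paper does not supply a proof of this lemma; it is simply recalled from the companion paper \citet{brachetta_call_ceci_sgarra}. More to the point, your plan cannot be completed because the identity as written is \emph{false} for general $\bF$-predictable $H$. The Dol\'eans--Dade martingale property $\mathbb{E}[\mathcal{E}_T]=1$ that you establish in steps~1--2 says
\[
\mathbb{E}\Bigl[\exp\Bigl(\int_0^T\!\!\int H\,\ud N\Bigr)\Big/\exp\Bigl(\int_0^T\!\!\int (e^{H}-1)\,\lambda F(\ud z)\,\ud s\Bigr)\Bigr]=1,
\]
which is \emph{not} the asserted equality of the two separate expectations unless the denominator is deterministic. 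Your successive-conditioning argument for simple predictable integrands hits the same obstruction: after replacing the last factor by its $\F_{s_{n-1}}$-conditional mean you obtain an $\F_{s_{n-1}}$-measurable quantity that is in general correlated with the earlier Poisson increments through the predictable coefficients, so the induction does not close.

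For a concrete counterexample take $N$ a unit-rate Poisson process on $[0,2]$ (no marks), set $M=N_1$, $K=N_2-N_1$, and $H(t)=c\,\I_{(0,1]}(t)-M\,\I_{(1,2]}(t)$ with $c\neq 0$. A direct computation gives the left-hand side as $\mathbb{E}\bigl[e^{cM}e^{e^{-M}-1}\bigr]$ and the right-hand side as $e^{e^{c}-1}\,\mathbb{E}\bigl[e^{e^{-M}-1}\bigr]=\mathbb{E}[e^{cM}]\,\mathbb{E}\bigl[e^{e^{-M}-1}\bigr]$; these differ because both factors are strictly monotone functions of the single variable $M$ and hence genuinely correlated. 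The formula \emph{is} valid --- and is the classical Campbell/L\'evy--Khintchine identity --- when $H$ is deterministic (or, more generally, when $\int_0^T\!\int(e^{H}-1)\lambda F(\ud z)\,\ud s$ is independent of $N$). That is also how the paper actually uses it in the proof of Lemma~\ref{lemma_eq}: the authors first condition on $\G^u\vee\G^\lambda$, which freezes the integrand before the exponential formula is invoked.
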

\begin{lemma}\label{lemma_eq}
For any $u \in \mathcal{U}_t$, $(t, \lambda) \in [0,T] \times (0,+ \infty)$ the following equality holds
 \begin{equation*}
 \begin{split}
& \mathbb E_{t, \lambda}^{\Q}\! \left[ e^{ -  \int_t^T \! \int_0^{+ \infty}\! A(s,u.) z m^\b(\ud s , \ud z) } 
e^{\int_t^T\!\int_0^{+\infty}\! \left( \ln (\lambda_{s^-})  + B(s,z, u.) \right ) \,m^{(1)}(\ud s,\ud z)}  e^{- \int_t^T\! a(s,u_s) ( \beta + (\lambda - \beta) e^{-\alpha (s-t)} )\ud s }\right ] =\\
& e^{-(T-t)} \mathbb E_{t, \lambda}^{\Q} \left[ e^{ -  \int_t^T  \int_0^{+ \infty} (e^{-A(s,u.) z} -1) \rho F^\b(\ud z)\ud s } 
e^{\int_t^T\int_0^{+\infty} \lambda_{s}  e^{ B(s,z, u.) } \,F^\a (\ud z) \ud s}  e^{- \int_t^T a(s,u_s) ( \beta + (\lambda - \beta) e^{-\alpha (s-t)} )\ud s } \right ].
\end{split}
 \end{equation*}
\end{lemma}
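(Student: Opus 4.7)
The proof is a direct application of Lemma \ref{lemma:exp_predictable2} to the stochastic integrals against $m^{(1)}$ and $m^{(2)}$ under $\Q$. The strategy is, essentially, to replace each integral against the random measure by its compensator counterpart and then gather terms; the main task is to check that the common multiplicative factors on both sides correctly cancel out.

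First, observe that the ``drift'' factor $e^{- \int_t^T a(s,u_s) (\beta + (\lambda - \beta) e^{-\alpha (s-t)}) \ud s}$ appears on both sides of the claim, so we may focus on the two stochastic exponentials. Under $\Q$, the measures $m^{(1)}$ and $m^{(2)}$ are independent Poisson random measures with deterministic $\bF$-intensity kernels $F^{(1)}(\ud z) \ud s$ and $\rho F^{(2)}(\ud z) \ud s$, respectively (see the definition of $\nu^\Q$ implicit in \eqref{nu} specialized to the $\Q$-dynamics). The integrands $-A(s,u.)z$ (for $m^{(2)}$) and $\ln \lambda_{s^-} + B(s, z, u.)$ (for $m^{(1)}$) are $\bF$-predictable, since $u$, $\lambda_{s^-}$, $A(\cdot,u.)$ and $B(\cdot,\cdot,u.)$ are.

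Next, apply Lemma \ref{lemma:exp_predictable2}, either sequentially (using the $\Q$-independence of $m^{(1)}$ and $m^{(2)}$ to decouple the integrands) or jointly to the combined marked point process with deterministic compensator $\nu^{\Q}(\ud s, \ud z_1, \ud z_2) = F^{(1)}(\ud z_1) \delta_0(\ud z_2) \ud s + \rho F^{(2)}(\ud z_2) \delta_0(\ud z_1) \ud s$. This yields, inside the $\Q$-expectation, the replacements $e^{-\int_t^T \int A z\, m^{(2)}(\ud s, \ud z)} \rightsquigarrow e^{\int_t^T \int (e^{-A z} - 1) \rho F^{(2)}(\ud z) \ud s}$ and $e^{\int_t^T \int (\ln \lambda_{s^-} + B) m^{(1)}(\ud s, \ud z)} \rightsquigarrow e^{\int_t^T \int (\lambda_{s^-} e^B - 1) F^{(1)}(\ud z) \ud s}$.

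The crucial simplification comes from the fact that $F^{(1)}$ is a probability measure on $[0, +\infty)$, so $\int_0^{+\infty} F^{(1)}(\ud z) = 1$ and $\int_t^T \int_0^{+\infty} (\lambda_{s^-} e^B - 1) F^{(1)}(\ud z) \ud s = \int_t^T \lambda_{s^-} \int_0^{+\infty} e^B F^{(1)}(\ud z) \ud s - (T-t)$. The $-(T-t)$ piece produces precisely the prefactor $e^{-(T-t)}$ on the right-hand side of the claim. Since the jump times of $m^{(1)}$ form a Lebesgue-null set, $\lambda_{s^-} = \lambda_s$ almost everywhere in $s$ inside Lebesgue integrals, yielding the stated form.

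The main delicate point is justifying the application of Lemma \ref{lemma:exp_predictable2} when the predictable integrand depends on the paths of the very measures being integrated (through $\lambda_{s^-}$ and $u$). This is handled by exploiting the $\Q$-independence of $m^{(1)}$ and $m^{(2)}$ together with the integrability assumptions on $A$, $B$, $a$ inherited from Assumption \ref{ass_app_premium}: one interprets Lemma \ref{lemma:exp_predictable2} through the Dol\'eans-Dade exponential of the compensated jump integrals and invokes standard Fubini/conditioning arguments in order to treat the two measures separately.
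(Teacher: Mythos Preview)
Your overall strategy coincides with the paper's: apply Lemma \ref{lemma:exp_predictable2} to each of the two random-measure integrals, use that $F^{(1)}$ has total mass $1$ to extract the $e^{-(T-t)}$ factor, and replace $\lambda_{s^-}$ by $\lambda_s$ in the Lebesgue integrals. The computation is correct.

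There is, however, one concrete slip. You assert that $A(\cdot,u.)$ and $B(\cdot,\cdot,u.)$ are $\bF$-predictable, but by definition $A(s,u.)=\int_s^T a(v,u_v)e^{-\alpha(v-s)}\,\ud v$ depends on the \emph{future} path $(u_v)_{v\in[s,T]}$, so neither $A(s,u.)$ nor $B(s,z,u.)$ is $\F_{s^-}$-measurable for a general control $u\in\mathcal U_t$. Consequently Lemma \ref{lemma:exp_predictable2} cannot be invoked directly with those integrands. The paper resolves precisely this point by first interposing the conditioning on $\G^u\vee\G^\lambda$, where $\G^u=\sigma\{u_s,\ t\le s\le T\}$ and $\G^\lambda=\sigma\{\lambda_s,\ t\le s\le T\}$: once the full trajectories of $u$ and $\lambda$ are frozen, $A$, $B$ and $\ln\lambda_{s^-}$ become deterministic, the $\Q$-independence of $m^{(1)}$ and $m^{(2)}$ factorises the conditional expectation, and Lemma \ref{lemma:exp_predictable2} applies to each factor. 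Your final paragraph gestures at a ``conditioning argument'' but misidentifies the obstacle (you flag the dependence of $\lambda_{s^-}$ on the measures rather than the anticipative nature of $A$); making the conditioning on $\G^u\vee\G^\lambda$ explicit closes the gap and brings your argument in line with the paper's.
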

\begin{proof}
    Since the random measures $m^\a(\ud t , \ud z)$ and $m^\b(\ud t , \ud z)$ are independent Poisson random measures with intensity kernel $ F^\a(\ud z)\,\ud t$ and $F^\b(\ud z) \rho \,\ud t$, respectively, under $\Q$, by interponing the conditioning on $\G^u \vee \G^\lambda$, with $\G^u=\sigma\{u_s,\ t \le s \le T\}$ and $\G^\lambda=\sigma\{\lambda_s,\ t \le s \le T\}$,  we get
    \begin{align*}
& \mathbb E_{t, \lambda}^{\Q}\! \left[ e^{ -  \int_t^T \! \int_0^{+ \infty}\! A(s,u.) z m^\b(\ud s , \ud z) } 
e^{\int_t^T\!\int_0^{+\infty}\! \left( \ln (\lambda_{s^-})  + B(s,z,u.) \right)  \,m^{(1)}(\ud s,\ud z)}  e^{- \int_t^T\! a(s,u_s) ( \beta + (\lambda - \beta) e^{-\alpha (s-t)} )\ud s }\right ] \\
&= E_{t, \lambda}^{\Q}\!\bigg[ e^{- \int_t^T\! a(s,u_s) ( \beta + (\lambda - \beta) e^{-\alpha (s-t)} )\ud s}  E_{t, \lambda}^{\Q}\!\left [e^{ -  \int_t^T \! \int_0^{+ \infty}\! A(s,u.) z m^\b(\ud s , \ud z) }|\G^u \vee \G^\lambda \right ] \\
& \qquad \times E_{t, \lambda}^{\Q}\!\left [e^{\int_t^T\!\int_0^{+\infty}\! \left( \ln (\lambda_{s^-})  + B(s,z,u.)\right)  \,m^{(1)}(\ud s,\ud z)}|\G^u \vee \G^\lambda \right ]  \bigg]\\
& = \mathbb E_{t, \lambda}^{\Q}\!\bigg[ e^{- \int_t^T\! a(s,u_s) ( \beta + (\lambda - \beta) e^{-\alpha (s-t)} )\ud s}  \mathbb E_{t, \lambda}^{\Q} \left [e^{ \int_t^T  \int_0^{+ \infty} (e^{-A(s,u.) z} -1) \rho F^\b(\ud z)\ud s }  |\G^u \vee \G^\lambda\right ]\\
& \qquad \times \mathbb E^{\Q} \left [ e^{\int_t^T\int_0^{+\infty} (\lambda_{s}  e^{ B(s,z,u.)}-1) \,F^\a (\ud z) \ud s} |\G^u \vee \G^\lambda \right]\bigg],
\end{align*}
where the last equality holds in view of Lemma \ref{lemma:exp_predictable2}. Finally, the thesis follows. 
\end{proof} 

\end{document}